\def\?[#1]{\textbf{[#1]}\marginpar{\Large{\textbf{??}}}}
\renewcommand{\Re}{\mathop{\rm Re}\nolimits}
\renewcommand{\Im}{\mathop{\rm Im}\nolimits}
\DeclareMathOperator{\WF}{WF}
\newcommand{\WFh}{\WF_h}
\DeclareMathOperator{\tr}{tr}
\DeclareMathOperator{\el}{ell}
\newcommand{\elh}{\el_h}
\DeclareMathOperator{\rank}{rank}
\DeclareMathOperator{\supp}{supp}
\DeclareMathOperator{\Res}{Res}
\DeclareMathOperator{\neigh}{neigh}
\newcommand{\RR}{\mathbb R}
\newcommand{\CC}{\mathbb C}
\newcommand{\CIc}{{C^\infty_{\rm{c}}}}
\newcommand{\CI}{{C^\infty}}
\newtheorem{thm}{Theorem}
\newtheorem{prop}{Proposition}
\newtheorem{lem}[prop]{Lemma}
\numberwithin{equation}{section}
\numberwithin{prop}{section}
\let\@maketoccontribs\maketoccontribs
\begin{document}
\title{A local trace formula for Anosov flows}
\author{Long Jin}
\address{CMSA, Harvard University, Cambridge, MA 02138, USA}
\email{jinlong@cmsa.fas.harvard.edu}
\author[M. Zworski]{Maciej Zworski}
\address{Department of Mathematics, University of California, Berkeley,
CA 94720, USA}
\email{zworski@math.berkeley.edu}
\address{Laboratoire d'Analyse non
lin\'eaire et g\'eom\'etrie, Universit\'e d'Avignon,
Avignon, France}
\email{frederic.naud@univ-avignon.fr}
\date{}

\maketitle

\vspace{-0.2in}

\begin{center}
{\sc With appendices by Fr\'ed\'eric Naud}
\end{center}

\vspace{-0.1in}

\begin{abstract}
We prove a local trace formula for Anosov flows. It relates
Pollicott--Ruelle resonances to the periods of closed orbits.
As an application, we show that the counting function for resonances in a sufficiently wide strip cannot have a sublinear growth. In particular, for any Anosov flow there exist strips with infinitely many resonances.
\end{abstract}

\section{Introduction}
Suppose $X$ is a smooth
compact manifold and $\varphi_t:X\to X$ is an Anosov flow
generated by a smooth vector field $ V $, $ \varphi_t := \exp t V $.
Correlation functions for a flow are defined as
\begin{equation}
\label{eq:corr}
\rho_{f,g} ( t ) := \int_X f ( \varphi_{-t} ( x ) ) g ( x ) dx , \ \
f , g \in C^\infty ( X ) , \ \ t > 0 ,
\end{equation}
where $ dx $ is a Lebesgue density on $ X $.
The power spectrum is defined as the (inverse) Fourier-Laplace transform
of $ \rho_{f,g} $:
\begin{equation}
\label{eq:powersp}
\widehat \rho_{f,g} ( \lambda ) := \int_0^\infty \rho_{f,g} ( t ) e^{ i \lambda t } dt,
\ \ \ \Im \lambda > 0 .
\end{equation}
Faure--Sj\"ostrand \cite{FaSj} proved that
\[   ( P - \lambda)^{-1}  : C^\infty ( X )
\to {\mathcal D}' ( X ) , \ \ P := \frac 1 i V , \ \ \Im \lambda \gg 1 , \]
continues to a meromorphic family of operators on all of $ \mathbb C $. Using the fact
that $ f ( \varphi_{-t} ( x ) ) = [\exp ( - i t P ) f ] ( x ) $ this easily shows
that $ \widehat \rho_{f,g} ( \lambda ) $ has a meromorphic continuation.
The poles of this continuation depend only on $ P $ and their study
was initiated in the work of Ruelle \cite{Rue} and Pollicott \cite{Po}.
They are called {\em Pollicott--Ruelle resonances} and their set is denoted
by $ \Res ( P )$. The finer properties of the correlations are then
related to the distribution of these resonances. This is particularly
clear in the work of Liverani \cite{Liv} and Tsujii \cite{Ts} on contact
Anosov flows, see also Nonnenmacher--Zworski \cite{NZ} for semiclassical generalizations.

An equivalent definition of Pollicott--Ruelle resonances was given
by Dyatlov--Zworski \cite{DZ2}: they are limits (with multiplicities)
of the eigenvalues of $ P + i \epsilon \Delta_g $, $ \epsilon \to 0 + $,
where $ - \Delta_g \geq 0 $ is a Laplacian for some { Riemannian metric $g$} on $ X $.
Because of a connection to Brownian motion this shows stochastic stability of these resonances.

In this note we address the basic question about the size of the set of
resonances: is their number always infinite? Despite the long
tradition of the subject this appeared to be unknown for arbitrary
Anosov flows on compact manifolds.
In Theorem \ref{thm2}, we show that
in sufficiently large strips the counting function of resonances cannot
be bounded by $r^\delta$, $ \delta < 1 $.

\begin{figure}
\includegraphics[width=6in]{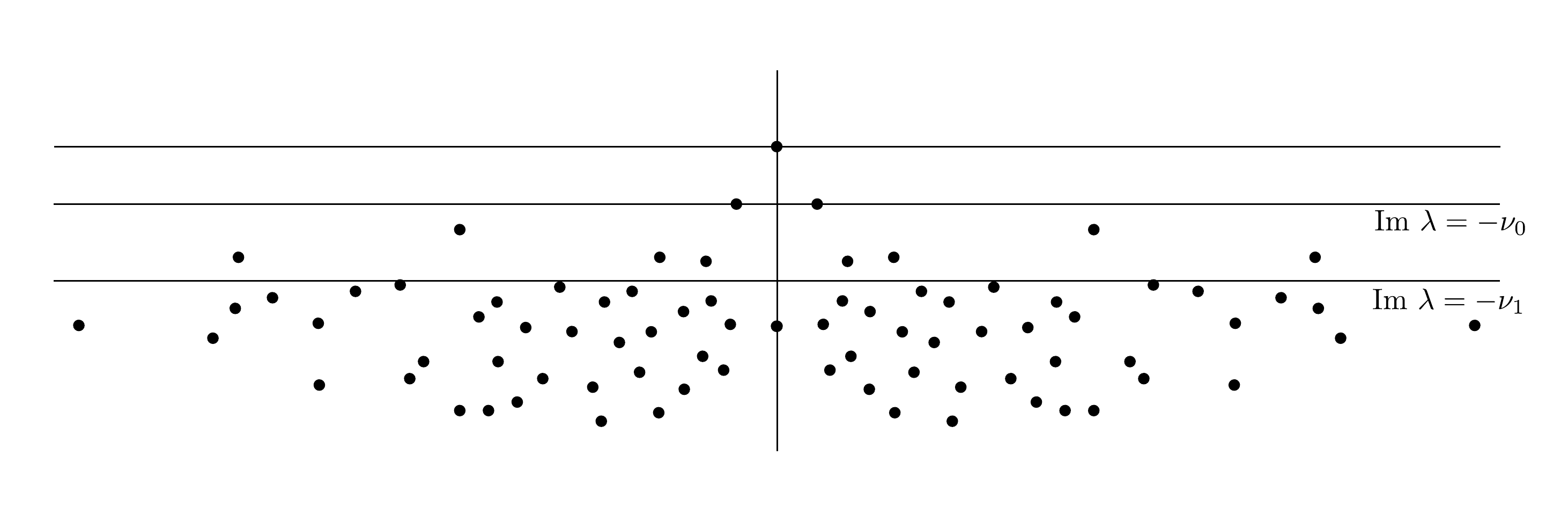}
\caption{The {\em spectral gap} $ \nu_0 $ is the supremum of $ \nu $
such that there are {\em no} resonances with $ - \nu < \Im \lambda $,
$ \lambda \neq 0 $.
For contact Anosov flows it is known that $ \nu_0 > 0 $ \cite{Liv},\cite{NZ},\cite{Ts}.
The {\em essential spectral gap}, $  \nu_1 $, is the supremum of $ \nu $
such that there are only finitely many resonances with $ \Im \lambda > - \nu $.
Our result states that the essential spectral gap is finite for any
Anosov flow on a compact manifold.}
\end{figure}

{General upper bounds
on the number of resonances in strips were established by
Faure--Sj\"ostrand \cite{FaSj} (and with a sharper 
exponent in the case of contact flows by Datchev--Dyatlov--Zworski \cite{DDZ}):
for any $A > 0 $ there exists $ C $ such that
\begin{equation}
\label{eq:counting}
\#(\Res(P)\cap\{\Im\mu>-A, |\Re\mu - r |\leq \sqrt r\})
\leq Cr^{n- \frac12} .
\end{equation}}
On the other hand, for contact Anosov flows satisfying certain pinching conditions on Lyapunov exponents, Faure--Tsujii \cite{FaTs} showed that
the resonances satisfy a precise counting law in strips, agreeing with
the upper bound of \cite{DDZ}. That is a far reaching generalization of the results known in constant curvature: see Dyatlov--Faure--Guillarmou \cite{DFG} for recent results in that case and references.

The new counting result is proved by
establishing a local trace formula relating resonances to periods of
closed trajectories and to the their Poincar\'e maps. Hence we denote
by  $\mathcal{G}$ periodic orbits $\gamma$ of the flow, by  $T_\gamma$ the period of $\gamma$ and by $T_\gamma^\#$ the primitive period. We let $\mathcal{P}_\gamma$ be the linearized Poincar\'{e} map -- see \S \ref{pr}.
With this notation we can state our {\em local trace formula}:
\begin{thm}
\label{thm1}
For any $A>0$ there exists
a distribution $F_A\in\mathcal {S}'(\mathbb{R})$ supported in $[0,\infty)$ such that
\begin{equation}
\label{localtrace}
\sum_{\mu\in\Res(P), \Im\mu>-A}e^{-i\mu t}+F_A(t)
=\sum_{\gamma\in\mathcal{G}}\frac{T_\gamma^{\#}\delta(t-T_\gamma)}
{|\det(I-\mathcal{P}_\gamma)|},\;\;\;\; t>0
\end{equation}
in the sense of distribution on $(0,\infty)$.
Moreover, the Fourier transform of $ F_A $ has an analytic extension
to $\Im\lambda <  A$ which satisfies,
\begin{equation}
\label{error}
|\widehat{F}_A(\lambda) |=\mathcal{O}_{A,\epsilon}( \langle \lambda \rangle^{2n+1}), \ \
\Im\lambda < A - \epsilon, \text{ for any $\epsilon>0$.}
\end{equation}
\end{thm}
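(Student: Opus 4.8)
The plan is to reduce \eqref{localtrace} to a statement about the flat trace of the resolvent $R(\lambda):=(P-\lambda)^{-1}$, and then to run a contour--deformation argument in the spectral parameter. The first ingredient is the Atiyah--Bott--Guillemin trace formula: for $t>0$ the operators $e^{-itP}=\varphi_{-t}^{*}$ have a well-defined flat trace as a distribution in $t$ (the kernel, viewed on $(0,\infty)\times X\times X$, has wavefront set disjoint from the relevant conormal, by propagation of singularities and the Anosov structure), and
\[
\tr^\flat\big(e^{-itP}\big)=\sum_{\gamma\in\mathcal{G}}\frac{T_\gamma^{\#}\,\delta(t-T_\gamma)}{|\det(I-\mathcal{P}_\gamma)|},\qquad t>0 .
\]
Since $R(\lambda)=i\int_{0}^{\infty}e^{i\lambda t}e^{-itP}\,dt$ for $\Im\lambda\gg1$, the integral converging on the anisotropic spaces of Faure--Sj\"ostrand, taking flat traces gives, for $\Im\lambda\gg1$,
\[
\tr^\flat R(\lambda)=i\sum_{\gamma\in\mathcal{G}}\frac{T_\gamma^{\#}e^{i\lambda T_\gamma}}{|\det(I-\mathcal{P}_\gamma)|},
\]
i.e.\ a fixed multiple of $\tr^\flat R$ is the Fourier--Laplace transform of the right-hand side of \eqref{localtrace}. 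By Faure--Sj\"ostrand this continues meromorphically to $\mathbb{C}$ together with $R(\lambda)$; because each resonance has a finite-rank spectral projector and the (flat) trace of the nilpotent part of a principal part vanishes, $\tr^\flat R$ has only simple poles, precisely at $\Res(P)$, with residue $-m(\mu)$ (the multiplicity).

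Next I would carry out the contour deformation. By \eqref{eq:counting} the series $\sum_{\mu\in\Res(P),\,\Im\mu>-A}e^{-i\mu t}$ converges in $\mathcal{D}'((0,\infty))$, so \eqref{localtrace} defines $F_A$ as a distribution on $(0,\infty)$, and the point is to identify it with the restriction of a tempered distribution on $\RR$ supported in $[0,\infty)$. After moving $A$ by an arbitrarily small amount (the statement is stable under this) I may assume there are no resonances on the line $\Im\lambda=-A$. For $\phi\in\CIc((0,\infty))$, Fourier inversion applied to the Laplace-transformable orbit sum writes $\big\langle\sum_\gamma\cdots,\phi\big\rangle$ as a weighted integral of $\tr^\flat R$ against the Fourier transform of $\phi$ over a high horizontal line $\Im\lambda=c\gg1$; shifting this contour down to $\Im\lambda=-A$ and collecting residues produces exactly $\sum_{\Im\mu>-A}m(\mu)\langle e^{-i\mu\cdot},\phi\rangle$ together with the integral over $\Im\lambda=-A$. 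Hence $\widehat{F}_A$ is $\tr^\flat R$ (up to the fixed constant) with the principal parts at the resonances $\Im\mu>-A$ subtracted; these poles cancel, so in the normalization of the statement $\widehat{F}_A$ extends holomorphically to $\{\Im\lambda<A\}$, and a Paley--Wiener argument (holomorphy together with polynomial bounds in a half-plane) gives $\supp F_A\subset[0,\infty)$.

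It remains to prove the polynomial bound \eqref{error}, which says that $\tr^\flat R(\lambda)$, minus its principal parts at the strip resonances, is $\mathcal{O}(\langle\lambda\rangle^{2n+1})$ on $\{\Im\lambda<A-\epsilon\}$. The mechanism I would use is a Grushin problem localized at frequency $\langle\lambda\rangle$: using \eqref{eq:counting} to bound the number $K\sim\langle\lambda\rangle^{n}$ of resonances in the relevant $\lambda$--window, one adjoins finite-rank maps $R_\pm$ so that the augmented matrix $\left(\begin{smallmatrix} P-\lambda & R_-\\ R_+ & 0\end{smallmatrix}\right)$ is invertible with holomorphic, polynomially bounded inverse $\left(\begin{smallmatrix} E & E_+\\ E_- & E_{-+}\end{smallmatrix}\right)$; then $R(\lambda)=E(\lambda)-E_+(\lambda)E_{-+}(\lambda)^{-1}E_-(\lambda)$, the scalar $\det E_{-+}(\lambda)$ vanishes exactly at the resonances with the right multiplicities, and $\tr^\flat R(\lambda)=\tr^\flat E(\lambda)-\tr\!\big(E_{-+}(\lambda)^{-1}E_-(\lambda)E_+(\lambda)\big)$. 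The first term is holomorphic and polynomially bounded, and the finite-dimensional term supplies precisely the principal parts at the resonances plus a holomorphic remainder of controlled size; patching over frequency windows yields $\widehat{F}_A$ and the bound \eqref{error}. Equivalently one may realize a fixed multiple of $\tr^\flat R$ as the logarithmic derivative $\mathcal{Z}'/\mathcal{Z}$ of an entire function $\mathcal{Z}$ (an Atiyah--Bott/Fredholm determinant $\det(I-\mathcal{L}(\lambda))$ of a trace-class transfer operator, as in Giulietti--Liverani--Pollicott and Dyatlov--Zworski) whose zeros with multiplicity are the resonances, prove $\log|\mathcal{Z}(\lambda)|=\mathcal{O}(\langle\lambda\rangle^{2n+1})$, and apply the standard Hadamard-factorization/Jensen estimate.

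The main obstacle is exactly this last step: the uniform polynomial control of $\tr^\flat R(\lambda)$ (equivalently of the determinant $\mathcal{Z}$, or of the resolvent of the associated Grushin problem) in horizontal strips. It requires the full anisotropic-Sobolev/escape-function microlocal machinery of Faure--Sj\"ostrand to bound the resolvent on weighted spaces with a fixed-order loss, the fact that the flat trace of $R(\lambda)$ is well defined and depends on its kernel only through finitely many derivatives transverse to the flow (this, together with the resolvent loss, is what produces the dimensional exponent $2n+1$), and the resonance-counting input \eqref{eq:counting} to keep the number of auxiliary parameters polynomial in $\langle\lambda\rangle$. Once \eqref{error} is established, the contour shift above is rigorous and \eqref{localtrace} follows.
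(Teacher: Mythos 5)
Your high-level skeleton is the right one, and it matches the paper's: start from the Atiyah--Bott--Guillemin trace formula, express the orbit side as a Fourier--Laplace transform of a regularized flat trace of the resolvent, deform the contour from $\Im\lambda\gg 1$ down to $\Im\lambda=-A$ picking up residues at resonances, and control the remaining contour integral by a polynomial bound. The last step is where the real work is, and you correctly identify it as such. However there are two genuine gaps.

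First, $\tr^\flat R(\lambda)$ is not well-defined, so the identity you write for $\Im\lambda\gg 1$ does not make sense as stated. The Schwartz kernel of $(P-\lambda)^{-1}=i\int_0^\infty e^{i\lambda t}\varphi_{-t}^*\,dt$ picks up a singularity on the diagonal from the $t\to 0^+$ limit (at $t=0$ one has the identity), so $\WF'(R(\lambda))\supset\Delta(T^*X)$ and the pullback to the diagonal fails the transversality criterion. The fact that, as a distribution in $t$, $\tr^\flat e^{-itP}$ vanishes on $(0,t_0)$ for $t_0<\inf T_\gamma$ does not allow you to exchange $\tr^\flat$ with $\int_0^\infty\cdots dt$. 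The paper's fix is to insert a time shift and work with $e^{-it_0(P-\lambda)}(P-\lambda)^{-1}$, whose twisted wavefront set, by Proposition \ref{p:3.3}, is $\{(e^{-t_0 H_p}(x,\xi),(y,\eta))\in\Delta(T^*X)\cup\Omega_+\cup(E_u^*\times E_s^*)\}$ and hence avoids the conormal to the diagonal; this gives exactly $\frac{d}{d\lambda}\log\zeta_1(\lambda)$ in \eqref{zetaresolvent}. Without this shift, your starting identity is vacuous.

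Second, your treatment of the polynomial bound --- which you rightly call the main obstacle --- is left as a sketch with two alternatives, neither of which works as stated. The Hadamard/Fredholm-determinant alternative is precisely what the paper points out is \emph{not} available in the $C^\infty$ Anosov setting: finite order of $\zeta_1$ is only known under analyticity hypotheses (Rugh, Fried), and indeed that would give the global formula \eqref{globaltrace}, not the local one proved here. The Grushin route is closer in spirit to the paper but you need to say how the auxiliary problem is built; the paper does this concretely via a finite-rank complex absorbing potential $Q=f(-h^2\Delta_g)$, $\rank Q=\mathcal O(h^{-n})$, and the decomposition \eqref{decomposition} into three pieces: (a) the shifted absorbed resolvent $e^{-it_0(P-i\widetilde Q-\lambda)}(P-i\widetilde Q-\lambda)^{-1}$, holomorphic in the strip, whose flat trace is bounded by $\mathcal O(h^{-2n-1})$ after a careful semiclassical wavefront-set analysis (Proposition \ref{flattracees} --- this is the technical heart of the paper, involving localization away from fiber infinity and the norm estimate \eqref{modifiedresolvent}); (b) the finite-rank difference $R_h(z)-\widetilde R_h(z)$ whose trace is $-\frac{d}{dz}\log\det F(z)$ with $F(z)=I+iQ\widetilde R_h(z)$; and (c) a Duhamel term. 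Controlling the derivative of $\log\det F$ along the contour requires Cartan's lower modulus theorem (\eqref{lowermod}) to pick contours on which $|\log\det F|=\mathcal O(h^{-n-1})$, followed by Cauchy's estimate. None of these ingredients --- the CAP decomposition, the flat trace estimate, the Cartan contour choice --- appear in your proposal, and they are what make the exponent $2n+1$ in \eqref{error} come out. Until these are supplied, the contour shift is formal.
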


The trace formula \eqref{localtrace} can be motivated as follows.
For the case of geodesic flows of compact Riemann surfaces,
$X=S^*(\Gamma\backslash\mathbb{H}^2)$, where $ \Gamma $ is
co-compact subgroup of $ {\rm{SL}}_2 ( \mathbb R) $,
and $\varphi_t$ is the geodesic flow, we have a global trace formula:
\begin{equation}
\label{globaltrace}
\sum_{\mu\in\Res(P)}e^{-i\mu t}=\sum_{\gamma\in\mathcal{G}}\frac{T_\gamma^\#\delta(t-T_\gamma)}{|\det(I-\mathcal{P}_\gamma)|},\;\;\; t>0.
\end{equation}
Here the set of resonances is given by
$\Res(P)=\{\mu_{j,k}=\lambda_j-i(k+\frac{1}{2}),j,k\in\mathbb{N}\}$ (up to exceptional values on the imaginary axis),
where $\lambda_j$'s are the eigenvalues
of the Laplacian on $ \Gamma \backslash \mathbb H^2 $.  This follows
from the Atiyah--Bott--Guillemin trace formula and
the Selberg trace formula -- see \cite{DFG} and references given there.
{  The bound $ \langle \lambda \rangle^{ 2 n +1 } $ in 
\eqref{error} is probably not
optimal and comes from very general estimates presented in \S \ref{estflt}.
It is possible that \eqref{globaltrace} is valid for all Anosov flows.}

Melrose's Poisson formula for
resonances valid for Euclidean infinities \cite{Me,SZ,Z1} and some
hyperbolic infinities \cite{GZ} suggests that \eqref{globaltrace}
could be valid for general Anosov flows but that is unknown.

In general, the validity of \eqref{globaltrace} follows from, but is not equivalent to, the finite order (as an entire function) of the analytic continuation of
\begin{equation}
\label{zeta}
\zeta_1(\lambda) { :=} \exp\left(-\sum_\gamma\frac{T_\gamma^\#e^{i\lambda T_\gamma}}{T_\gamma|\det(I-\mathcal{P}_\gamma)|}\right).
\end{equation}
This finite order property is only known under certain analyticity assumptions on $X$ and $\varphi_t$ -- see Rugh \cite{Ru} and Fried \cite{Fr}. {  
The notation $ \zeta_1 $ is motivated by the factorization of the Ruelle
zeta function -- see \cite[(2.5)]{DZ}.}

As a consequence of the local trace formula \eqref{localtrace},
we have the following weak lower bound on the number of resonances in a sufficiently wide strip near the real axis. It is formulated using the Hardy-Littlewood notation: $f=\Omega(g)$ if it is
not true that $|f|=o(|g|)$.

\begin{thm}
\label{thm2}
For every $\delta\in(0,1)$ there exists a constant $A_\delta>0$ such that if $A>A_\delta$, then
\begin{equation}
\label{lowerbound}
\#(\Res(P)\cap\{\mu \in \CC \, : \, |\mu|\leq r, \ \Im\mu>-A\})=\Omega(r^\delta).
\end{equation}
In particular, there are infinitely many resonances in any strip $\Im\mu>-A$ for $A$ sufficiently large. 
\end{thm}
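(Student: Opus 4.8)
The plan is to argue by contradiction from the local trace formula of Theorem~\ref{thm1}. If the conclusion failed there would be $\delta\in(0,1)$ with $N_A(r):=\#(\Res(P)\cap\{|\mu|\le r,\ \Im\mu>-A\})=o(r^{\delta})$ for every $A>0$ (using that $N_A$ is nondecreasing in $A$). From \eqref{localtrace}--\eqref{error} I would first record, for $t>0$: (i) shifting the contour in the Fourier inversion formula up to the line $\Im\lambda=A-\epsilon$ and using \eqref{error}, $F_A$ is smooth on $(0,\infty)$ and $\langle F_A,\chi(\cdot-T)\rangle=\mathcal O_{\chi,\epsilon}(e^{-(A-\epsilon)T})$ for each fixed $\chi\in\CIc(\RR)$; and (ii) since $\Res(P)\subset\{\Im\mu\le 0\}$ and, under the sparsity hypothesis, $\sum_{\mu\in\Res(P),\ \mu\ne 0}\langle\mu\rangle^{-N}<\infty$ for every $N>\delta$, the flat trace $\sum_{\Im\mu>-A}e^{-i\mu t}$ pairs with a translate $\chi(\cdot-T)$ as the absolutely convergent sum $\sum_{\Im\mu>-A}e^{-i\mu T}\widetilde\chi(\mu)$, $\widetilde\chi(\mu)=\int\chi(s)e^{-i\mu s}\,ds$, a quantity controlled purely in terms of $N_A$ and bounded uniformly in $T$.

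Next I would localize \eqref{localtrace} at a large time. Fix $\chi\in\CIc((-\tfrac12,\tfrac12))$ with $\chi\ge0$ and $\chi\equiv1$ on $[-\tfrac14,\tfrac14]$, and pair \eqref{localtrace} with $\chi(\cdot-T)$ to get, for $T\gg1$,
\[
\sum_{\gamma\in\mathcal G}\frac{T_\gamma^{\#}\,\chi(T_\gamma-T)}{|\det(I-\mathcal P_\gamma)|}
=\sum_{\mu\in\Res(P),\ \Im\mu>-A}e^{-i\mu T}\,\widetilde\chi(\mu)+\mathcal O_\epsilon\bigl(e^{-(A-\epsilon)T}\bigr).
\]
The left-hand side is $\ge\sum_{|T_\gamma-T|\le1/4}T_\gamma^{\#}|\det(I-\mathcal P_\gamma)|^{-1}$, and by the classical orbit-counting theory for Anosov flows---positivity of the topological entropy together with the Bowen--Ruelle identity that the topological pressure of minus the unstable Jacobian vanishes, equivalently that $\zeta_1'/\zeta_1$ has its first pole at $\lambda=0$---this is bounded below by a constant $c_0>0$ independent of $T$. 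Hence $\bigl|\sum_{\Im\mu>-A}e^{-i\mu T}\widetilde\chi(\mu)\bigr|\ge c_0/2$ for all large $T$.

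It remains to show this is impossible once $A$ is large. Passing to the Fourier side, write $Z(\lambda):=\sum_{\gamma}T_\gamma^{\#}e^{i\lambda T_\gamma}|\det(I-\mathcal P_\gamma)|^{-1}$, the Fourier transform of the right-hand side of \eqref{localtrace} and, up to a factor $i$, the logarithmic derivative $\zeta_1'/\zeta_1$. By \eqref{localtrace}, \eqref{error} and (ii), $Z$ is meromorphic in $\{\Im\lambda>-A+\epsilon\}$, polynomially bounded there off small disks about its poles, and its poles are exactly the resonances in that region counted with multiplicity---so only $o(r^{\delta})$ of them with $|\lambda|\le r$. If $\delta$ were too small, integrating $\zeta_1'/\zeta_1$ would force $\zeta_1$ to behave in $\{\Im\lambda>-A+\epsilon\}$ like $e^{Q(\lambda)}$ times a product over those few zeros, hence $Z$ to be a rational function plus a polynomial there; but then the right-hand side of \eqref{localtrace} would be a finite combination of $\delta^{(k)}$'s and exponentials $e^{-i\mu t}$, which cannot equal the locally finite sum $\sum_\gamma T_\gamma^{\#}\delta(t-T_\gamma)|\det(I-\mathcal P_\gamma)|^{-1}$ with infinitely many atoms at periods tending to $\infty$ (an Anosov flow has $\gtrsim e^{hT}/T$ primitive orbits of period $\le T$, $h>0$). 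Quantifying how many poles in the strip are needed to reproduce atoms at the first $N$ periods yields $N_A(r)=\Omega(r^{\delta})$ once $A>A_\delta$.

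The step I expect to be the main obstacle is exactly this last quantitative conversion: controlling the error distribution $F_A$ honestly through \eqref{error} so as to absorb its $\mathcal O(e^{-(A-\epsilon)T})$ contribution on the relevant range of $T$ (this is where $A$ must be taken beyond a threshold $A_\delta$ depending only on $\delta$ and the dimension, and where $\delta<1$ is used) and controlling the resonances away from $\RR$, while turning the soft assertion ``the orbit side of \eqref{localtrace} is far too singular to be produced by $o(r^{\delta})$ exponentials'' into the bound $N_A(r)=\Omega(r^{\delta})$. The remaining ingredients---the contour shift behind (i), the convergence in (ii), and the orbit lower bound $c_0$---are routine.
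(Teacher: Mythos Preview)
Your setup is right—argue by contradiction from \eqref{localtrace}, pair with a test function translated to a large time, and split into the resonance sum, the $F_A$ remainder, and the orbit side. But the argument does not close, and the missing idea is not where you think it is.

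The gap is that your resonance sum $\sum_{\Im\mu>-A}e^{-i\mu T}\widetilde\chi(\mu)$ does \emph{not} become small under the hypothesis $N_A(r)=o(r^\delta)$; it is merely bounded. Indeed $\mu=0$ is always a resonance and contributes the fixed constant $\widetilde\chi(0)=\int\chi>0$, so the inequality $|\sum e^{-i\mu T}\widetilde\chi(\mu)|\ge c_0/2$ is no contradiction at all. Your fallback argument via $Z(\lambda)=\zeta_1'/\zeta_1$ then tries to force $Z$ to be ``rational plus polynomial'' from having only $o(r^\delta)$ poles in a strip, but that implication is false: sparse poles do not make a meromorphic function rational, and the step from there to ``the orbit side is a finite linear combination of exponentials and $\delta^{(k)}$'s'' is not justified.

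What the paper does, and what you are missing, is to introduce a second parameter: a \emph{scaling} $l\to0$ in addition to the translation. One tests with $\varphi_{l,d}(t)=\varphi(l^{-1}(t-d))$, whose Fourier transform is $l\,\widehat\varphi(l\zeta)e^{-id\zeta}$. Then the resonance sum obeys
\[
\Bigl|\sum_{\Im\mu>-A}\widehat\varphi_{l,d}(\mu)\Bigr|
\le Cl\int_0^\infty(1+lr)^{-N}\,dN_A(r)
\le C\,l^{1-\delta},
\]
which \emph{does} go to $0$ since $\delta<1$; this is exactly where the hypothesis $\delta<1$ enters. The $F_A$ term picks up $l^{-2n-1}e^{-(A-\epsilon)d}$ from \eqref{error} and Paley--Wiener. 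For the orbit side one no longer has a unit window, so instead of invoking Parry--Pollicott asymptotics (which, incidentally, need weak mixing in the form you quote and fail on the nose for constant-time suspensions), fix one primitive orbit $\gamma_0$, take $d=kT_{\gamma_0}$, and keep only the term from the $k$-th iterate: this gives the elementary lower bound $Ce^{-\theta_0 d}$ with $\theta_0$ depending only on $\mathcal P_{\gamma_0}$. Choosing $l=e^{-\beta d}$ with $\beta(1-\delta)>\theta_0$ and then $A>\theta_0+(2n+1)\beta$ produces the contradiction and the explicit threshold $A_\delta=\theta_0\bigl(1+(2n+1)(1-\delta)^{-1}\bigr)$.
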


\medskip
\noindent{\bf Remarks.} 1. { An explicit bound for the constant $A_\delta$ is given by \eqref{adelta} in the proof. This also gives an explicit bound $A_0=\inf\{A_\delta:0<\delta<1\}$ for the essential spectral gap.} In the case of analytic semiflows (see \cite{Naud1}) Fr\'ederic Naud \cite{Naud} pointed out that a better estimate of the essential spectral gap is possible: there are infinitely many resonances in any strip
$ \Im \lambda > - \frac32 P ( 2 ) - \epsilon $, where 
{  $ P  (s ) := P ( s \psi^u ) $ is the
topological pressure associated to the unstable Jacobian -- see \eqref{eq:press} and \eqref{eq:psiu}}.
In Appendix \ref{weakmix}, Fr\'ed\'eric Naud shows how similar methods and Theorem \ref{thm1} give
a narrower strip with infinitely many resonances for weakly mixing
Anosov flows.

\noindent
2. In the case of flows obtained by suspending Anosov maps the 
growth of the number of resonances in strips is linear -- see
Appendix \ref{suspe} by Fr\'ederic Naud 
for a detailed discussion of analytic perturbations of linear 
maps. That means that the exponent $ \delta $ close to one is
close to be optimal in general.

\medskip

The proof of Theorem \ref{thm1} uses the microlocal approach to
Anosov dynamics due to Faure--Sj\"ostrand \cite{FaSj} and
Dyatlov--Zworski \cite{DZ}. In particular we use the fact that
\begin{equation*}
\frac{d}{d\lambda} \log \zeta_1 ( \lambda ) = \tr^\flat e^{ i \lambda t_0 } \varphi_{-t_0}^* ( P - \lambda)^{-1} ,
\end{equation*}
and that the right hand side continues meromorphically with poles
with integral residues. Here the flat trace, $ \tr^\flat $,  is defined using a formal integration over the diagonal,  see \S \ref{flat},  with the justification provided by the crucial wave front set relation, see \S \ref{wavefront}. Some of the techniques are also related to the proof of Sj\"{o}strand's local trace formula for scattering resonances in the semiclassical limit \cite{S}. It is possible that an alternative
proof of Theorem \ref{thm1} could be obtained using the methods of Giulietti--Liverani--Pollicott \cite{GLP} employed in their proof of Smale's conjecture about zeta function (\cite{DZ} provided a simple microlocal proof of that conjecture).

The proof of Theorem \ref{thm2} is based on the proof of a similar
result in Guillop\'{e}--Zworski \cite{GZ} which in turn was inspired
by the work of Ikawa \cite{Ik} on existence of resonances in
scattering by several convex bodies.

\def\smallsection#1{\smallskip\noindent\textbf{#1}.}

\smallsection{Acknowledgements}
We would like to thank Semyon Dyatlov for helpful discussions and in particular for suggesting the decomposition \eqref{decomposition} which simplified the wave front arguments. We are also grateful to Fr\'ed\'eric Naud for sharing his
unpublished work \cite{Naud} with us and to the anonymous referee for
useful suggestions. This material is based
upon work supported by
the National Science Foundation under the grant and DMS-1201417.

\smallsection{Notation} We use the following notational
conventions: $ \langle x \rangle := ( 1 + |x|^2 )^{\frac12} $,
$ \langle u , \varphi \rangle$, for the the distributional
pairing of $ u \in \mathcal D' ( X ) $ (distributions on a compact
manifold $ X $), and $ \langle u , v \rangle_{ H}$ for
the Hilbert space inner product on $ H $.
We write $ f =  \mathcal O_\ell ( g)_B $ to mean that
$ \|f \|_B  \leq C_\ell  g $ where the norm (or any seminorm) is in the
space $ B$, and the constant $ C_\ell  $ depends on $ \ell $. When either $ \ell $ or $ B $ are absent then the constant is universal or the estimate is scalar, respectively. When $ G = \mathcal O_\ell ( g )_{B_1\to B_2 } $ then the operator $ B : H_1  \to H_2 $ has its norm bounded by $ C_\ell g $. By $ \neigh_U ( \rho ) $
we mean a (small) neighbourhood of $ \rho$ in the space $ U$.
We refer to \cite{DZ} and \cite{Z2} for the notational conventions from microlocal/semiclassical analysis as they appear in the text.

\section{Preliminaries}
\label{pr}
\subsection{Anosov flows}
Let $X$ be a compact Riemannian manifold, $V\in C^\infty(X;TX)$ be
a smooth non vanishing vector field and
and $\varphi_t=\exp tV:X\to X$ the corresponding flow.

The flow is called an {\em Anosov flow}
if the tangent space to $X$ has a continuous decomposition
$T_xX=E_0(x)\oplus E_s(x)\oplus E_u(x)$ which is invariant under the flow: $d\varphi_t(x)E_\bullet(X)=E_\bullet(\varphi_t(X))$,
$\bullet=s,u$, $E_0(x)=\mathbb{R}V(x)$,
and satisfies
\begin{equation*}
\begin{split}
|d\varphi_t(x)v|_{\varphi_t(x)}\leq Ce^{-\theta|t|}|v|_x,&\;\;\; v\in E_u(x), \ \ t<0\\
|d\varphi_t(x)v|_{\varphi_t(x)}\leq Ce^{-\theta|t|}|v|_x,&\;\;\; v\in E_s(x),
\ \ t>0,
\end{split}
\end{equation*}
for some fixed $C$ and $\theta>0$.

\subsection{Anisotropic Sobolev spaces}
Let us put $P=-iV:C^\infty(X)\to C^\infty(X)$;
then the principal symbol of $ P $, $p\in S^1(T^*X)$ (see
\cite[\S 18.1]{H3} or \cite[\S 14.2]{Z2} for this standard notation; an
overview of semiclassical and microlocal preliminaries needed in this
paper can be found in \cite[\S 2.3]{DZ}) is given by
$p(x,\xi)=\xi(V(x))$ which is homogeneous of degree 1.
The Hamilton flow of $ p $ is
the symplectic lift of $ \varphi_t $ to the cotangent bundle:
$e^{tH_p}(x,\xi)=(\varphi_t(x),({}^Td\varphi_t(x))^{-1}\xi)$.
We can define the dual decomposition $T^\ast_xX=E^\ast_0(x)\oplus E^\ast_s(x)\oplus E^\ast_u(x)$ where $E^\ast_0(x),E^\ast_s(x),E^\ast_u(x)$ are dual to $E_0(x),E_u(x),E_s(x)$, respectively. Then
\begin{equation*}
\begin{split}
\xi\not\in E^\ast_0(x)\oplus E^\ast_s(x)\Rightarrow d(\kappa(e^{tH_p}(x,\xi)),\kappa(E^\ast_u))\to0 \text{ as } t\to+\infty\\
\xi\not\in E^\ast_0(x)\oplus E^\ast_u(x)\Rightarrow d(\kappa(e^{tH_p}(x,\xi)),\kappa(E^\ast_s))\to0 \text{ as } t\to-\infty.
\end{split}
\end{equation*}
Here $\kappa:T^\ast X\setminus0\to S^\ast X := T^*X / \mathbb R_+ $
is the natural projection.

A microlocal version of anisotropic Sobolev spaces of
Blank--Keller--Liverani~\cite{BKL}, Baladi--Tsujii~\cite{BT}
and other authors was provided by Faure-Sj\"{o}strand \cite{FaSj}.
Here we used a simplified version from Dyatlov-Zworski \cite{DZ}. For that
we construct a function $m_G\in C^\infty(T^\ast X\setminus0;[-1,1])$ which is homogeneous of degree 0, is supported in a small neighbourhood of $E_s^\ast\cup E_u^\ast$ and satisfies
\begin{equation*}
m_G=1 \text{ near } E_s^\ast;\;\;\  m_G=-1 \text{ near } E_u^\ast;\;\;\ H_pm_G\leq0 \text{ everywhere. }
\end{equation*}
Next, we choose a pseudodifferential operator
$ G\in\Psi^{0+}(X)$, {  $ \sigma(G)=m_G(x,\xi)\log\langle\xi\rangle $}.
Then for $ s > 0 $, $\exp(\pm sG)\in\Psi^{s+}(X)$ -- see \cite[\S 8.2]{Z2}. The anisotropic Sobolev spaces are defined as
\begin{equation*}
H_{sG}:=\exp(-sG)L^2(X), \ \ \ \|u\|_{H_{sG}}: =\|\exp(sG)u\|_{L^2}.
\end{equation*}
By the construction of $G$, we have $H^s\subset H_{sG}\subset H^{-s}$.

\subsection{Properties of Resolvent}
We quote the following results about the resolvent of $P$, see \cite[Propositions 3.1, 3.2]{DZ}:
\begin{lem}
Fix a constant $C_0>0$. Then for $s>0$ large enough depending on $C_0$, $P-\lambda:D_{sG}\to H_{sG}$ is a Fredholm operator of index 0 in the region $\{\Im\lambda>-C_0\}$. Here the domain $D_{sG}$ of $P$ is the set of $u\in H_{sG}$ such that $Pu$ (in the distribution sense) is in $H_{sG}$ and it is a Hilbert space with norm $\|u\|_{D_{sG}}^2=\|u\|_{H_{sG}}^2+\|Pu\|_{H_{sG}}^2$.
\end{lem}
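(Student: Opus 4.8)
The plan is to run, for the real–principal–type operator $P-\lambda$, the standard microlocal Fredholm package developed in this context by Faure--Sj\"ostrand \cite{FaSj} and Dyatlov--Zworski \cite{DZ} (following Melrose, Vasy and others): an elliptic estimate, propagation of singularities, and radial–point estimates at the two radial sets, combined into a single a priori estimate, together with a dual estimate for the adjoint. Conjugating by the elliptic operator $\exp(sG)\in\Psi^{s+}(X)$ turns the assertion into the Fredholm statement for
\[ P_{sG}:=\exp(sG)(P-\lambda)\exp(-sG)=P-\lambda+\exp(sG)[P,\exp(-sG)] \]
acting on $L^2(X)$ with domain $\{u\in L^2:P_{sG}u\in L^2\}=\exp(sG)D_{sG}$. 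The extra term lies in $\Psi^{0+}(X)$, and its imaginary part is, to leading order, $s$ times a multiple of $H_pm_G\le0$; thus it is $\le0$ in the region between the radial sets $E^\ast_s$ and $E^\ast_u$ — where propagation, not an energy estimate, will carry the argument — while near $E^\ast_s$ and near $E^\ast_u$ it is of order zero, with a sign and size governed by $s$ and by the radial expansion rate of the Hamilton flow $e^{tH_p}$. The relevant geometry is: the characteristic set is $\Sigma:=\{p=0\}=(E^\ast_s\oplus E^\ast_u)\setminus0$; on $\Sigma$, $E^\ast_s$ is a radial source and $E^\ast_u$ a radial sink for $e^{tH_p}$, by the limits recalled above; and the weight $m_G$ equals $+1$ near $E^\ast_s$ and $-1$ near $E^\ast_u$.

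Next I would prove, for $u\in D_{sG}$ and $\Im\lambda>-C_0$, the a priori estimate
\[ \|u\|_{H_{sG}}\le C\,\|(P-\lambda)u\|_{H_{sG}}+C\,\|u\|_{H^{-N}},\qquad N\gg s. \]
Off $\Sigma$ the operator $P-\lambda$ is classically elliptic, which controls $u$ there. Near the radial source $E^\ast_s$ the weight $m_G=1$ makes the effective Sobolev order equal to $+s$, and the radial–source estimate applies once that order exceeds a threshold of the form $c_0+C_0/\theta$, giving direct control of $u$ microlocally near $E^\ast_s$. Propagation of singularities along $H_p$ then transports this control over all of $\Sigma\setminus(E^\ast_s\cup E^\ast_u)$, with the radial–sink estimate at $E^\ast_u$ — valid because $m_G=-1$ makes the effective order $-s$, which lies below $-c_0-C_0/\theta$ — supplying the endpoint. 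Both radial conditions read ``$s$ large depending on $C_0$'', which is exactly the hypothesis. Since $H_{sG}\subset H^{-s}$ embeds compactly into $H^{-N}$ for $N>s$, this estimate shows that $P-\lambda:D_{sG}\to H_{sG}$ has finite–dimensional kernel and closed range.

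For the cokernel I would apply the identical scheme to the formal adjoint $(P-\lambda)^\ast=P+(P^\ast-P)-\overline{\lambda}$ on the dual space $H_{sG}^\ast\cong H_{-sG}$: its Hamilton flow is $-H_p$, so $E^\ast_s$ and $E^\ast_u$ exchange the roles of source and sink, while the weight of $H_{-sG}$ is $-m_G$, which exchanges the assignments of large positive and large negative effective order accordingly; the same threshold on $s$ therefore yields an a priori estimate for $(P-\lambda)^\ast$ as well, so $P-\lambda$ is Fredholm on $D_{sG}$. Finally, the Fredholm index is constant on the connected set $\{\Im\lambda>-C_0\}$ because $\lambda\mapsto P-\lambda\in\mathcal L(D_{sG},H_{sG})$ is holomorphic, while for $\Im\lambda$ sufficiently large $(P-\lambda)^{-1}=i\int_0^\infty e^{i\lambda t}\varphi_{-t}^\ast\,dt$ converges in $\mathcal L(H_{sG})$, since the flow pullback grows at most exponentially on $H_{sG}$ by the construction of $G$; hence $P-\lambda$ is invertible there and the index is $0$ throughout the strip.

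The main obstacle is the pair of radial–point estimates at $E^\ast_s$ and $E^\ast_u$, and in particular getting both to hold with the \emph{single} weight $m_G$: one must keep careful track of how the anisotropic conjugation shifts the effective regularity by $\pm s$, and verify that the same $s$ clears the source threshold (which wants large \emph{positive} order) and the sink threshold (which wants large \emph{negative} order). Pinning down these thresholds — where the strip width $C_0$ and the Anosov rate $\theta$ enter quantitatively — is the step that forces $s$ to be chosen large depending on $C_0$, and where the hyperbolicity of the flow is genuinely used.
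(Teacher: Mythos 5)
Your sketch is correct and follows the standard microlocal Fredholm scheme (elliptic estimate away from the characteristic set, propagation of singularities, radial source/sink estimates at $E_s^\ast$ and $E_u^\ast$, the dual estimate for the adjoint, and deformation to $\Im\lambda\gg1$ for the index), with the threshold condition on $s$ and the sign of the subprincipal term after conjugating by $\exp(sG)$ identified correctly. The paper itself gives no proof for this lemma — it quotes it directly from Dyatlov--Zworski \cite[Propositions 3.1, 3.2]{DZ} (building on Faure--Sj\"ostrand \cite{FaSj}) — and your argument is precisely a summary of the proof in that reference.
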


\begin{lem}
Let $s>0$ be fixed as above. Then there exists a constant $C_1$ depending on $s$, such that for $\Im\lambda>C_1$, the operator $P-\lambda:D_{sG}\to H_{sG}$ is invertible and
\begin{equation}
\label{resolvent}
(P-\lambda)^{-1}=i\int_0^\infty e^{i\lambda t}\varphi_{-t}^\ast dt,
\end{equation}
where $\varphi_{-t}^\ast=e^{-itP}$ is the pull back operator by $\varphi_t$. The integral converges in operator norm $H^s\to H^s$ and $H^{-s}\to H^{-s}$.
\end{lem}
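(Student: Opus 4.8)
The plan is to prove the two formulas \eqref{resolvent} separately, and to deduce invertibility from the resolvent identity together with the Neumann series bound obtained along the way. First I would justify the convergence of the integral $i\int_0^\infty e^{i\lambda t}\varphi_{-t}^* dt$ acting on $H^s\to H^s$. Since $P=-iV$ generates the pull-back group $e^{-itP}=\varphi_{-t}^*$, and $\varphi_t$ is a smooth flow on the compact manifold $X$, the operators $\varphi_{-t}^*$ form a strongly continuous group on every Sobolev space $H^\sigma(X)$, with operator norm bounded by $Ce^{\Lambda |t|}$ for some $\Lambda=\Lambda(\sigma)\geq 0$ (this follows from Gr\"onwall applied to $\frac{d}{dt}\|\varphi_{-t}^*u\|_{H^\sigma}^2$, using that $P$ and $[P,\Lambda^\sigma]$ are bounded $H^\sigma\to H^{\sigma-1}$ and $H^\sigma\to H^\sigma$ respectively, $\Lambda^\sigma$ being an elliptic operator of order $\sigma$). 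Hence for $\Im\lambda>C_1:=\Lambda(s)$ (and also $>\Lambda(-s)$, so take the max), the integrand is bounded in operator norm by $Ce^{-(\Im\lambda-\Lambda)t}$, giving norm-convergence of the integral on $H^s\to H^s$ and $H^{-s}\to H^{-s}$.

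Next I would verify that the resulting bounded operator, call it $R(\lambda)$, is a two-sided inverse of $P-\lambda$ on $D_{sG}$. The standard computation is $(P-\lambda)R(\lambda)u = i\int_0^\infty e^{i\lambda t}(P-\lambda)\varphi_{-t}^* u\, dt$; since $P\varphi_{-t}^* u = i\partial_t\varphi_{-t}^* u$, we get $i\int_0^\infty e^{i\lambda t}(i\partial_t-\lambda)\varphi_{-t}^* u\, dt = -\int_0^\infty \partial_t\big(e^{i\lambda t}\varphi_{-t}^* u\big) dt = u$, where the boundary term at $t=\infty$ vanishes by the exponential decay and at $t=0$ gives $\varphi_0^* u = u$. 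The identity $R(\lambda)(P-\lambda)u=u$ for $u\in D_{sG}$ is obtained similarly, moving $P$ inside the integral (legitimate since $R(\lambda):H_{sG}\to D_{sG}$ by the equation $(P-\lambda)R(\lambda)=I$) and integrating by parts. Therefore $P-\lambda:D_{sG}\to H_{sG}$ is invertible with inverse $R(\lambda)$ for $\Im\lambda>C_1$. To pin down that this inverse, initially constructed as an operator on $L^2$-based spaces, really maps into and out of the anisotropic spaces $H_{sG}$, I would invoke the previous Lemma: for $\Im\lambda$ large, $P-\lambda:D_{sG}\to H_{sG}$ is Fredholm of index $0$, so it is invertible once it is injective; injectivity on $H_{sG}\subset H^{-s}$ follows because any element of the kernel would also be a genuine $H^{-s}$ solution of $(P-\lambda)u=0$, hence equal to $R(\lambda)\cdot 0 = 0$ by the computation above applied on $H^{-s}\to H^{-s}$.

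The main obstacle, and the point requiring the most care, is the interplay between the two function-space settings: formula \eqref{resolvent} is naturally an identity between operators on the isotropic spaces $H^{\pm s}$, whereas the Fredholm theory and the eventual meromorphic continuation live on the anisotropic spaces $H_{sG}$. One must check that the unique $H^{-s}$-solution of $(P-\lambda)u=f$ for $f\in H_{sG}$ actually lies in $H_{sG}$, which is exactly where the inclusion $H^s\subset H_{sG}\subset H^{-s}$ and the index-zero Fredholm property from the preceding Lemma are used; without them one only gets an inverse between the larger/smaller isotropic spaces. The growth rates $\Lambda(\pm s)$ entering $C_1$ should be tracked only qualitatively, since the precise value of $C_1$ plays no role beyond guaranteeing a nonempty region of convergence. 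Everything else—interchanging $P$ with the convergent operator-valued integral, and the integration by parts—is routine once the uniform exponential bound on $\|\varphi_{-t}^*\|_{H^\sigma\to H^\sigma}$ is in hand.
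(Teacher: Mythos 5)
Your proposal is correct and follows essentially the same route as the proof in \cite[Propositions 3.1--3.2]{DZ}, which the paper cites for this lemma: establish the exponential bound $\|\varphi_{-t}^*\|_{H^{\pm s}\to H^{\pm s}}\le Ce^{\Lambda t}$ by a Gr\"onwall argument, verify that the Laplace-transform integral gives a two-sided inverse of $P-\lambda$ on $H^{\pm s}$, and then transfer invertibility to $H_{sG}$ by combining the index-zero Fredholm property from the preceding lemma with injectivity, which is inherited from the $H^{-s}$ computation through the inclusion $H_{sG}\subset H^{-s}$. Two small slips are worth correcting but do not affect the argument: the commutator $[P,\Lambda^\sigma]$ is a pseudodifferential operator of order $\sigma$, hence bounded $H^\sigma\to L^2$ rather than $H^\sigma\to H^\sigma$ (the Gr\"onwall step really needs that $[\Lambda^\sigma,V]\Lambda^{-\sigma}$ is bounded on $L^2$, together with $V+V^*$ being a bounded multiplication operator); and the parenthetical justification that $R(\lambda):H_{sG}\to D_{sG}$ ``by the equation $(P-\lambda)R(\lambda)=I$'' is circular at that stage --- the identity $R(\lambda)(P-\lambda)u=u$ should simply be derived for $u$ in the domain of $P$ acting on $H^{-s}$ (which contains $D_{sG}$), using only the $H^{-s}$ bounds already established, after which the Fredholm argument closes the loop.
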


The analytic Fredholm theory now shows that the resolvent $ \lambda
\mapsto R(\lambda)=(P-\lambda)^{-1}:H_{sG}\to H_{sG}$ forms a meromorphic family of operators with poles of finite rank. In the region $\Im\lambda>-C_0$, the Ruelle-Pollicott resonances are defined as the poles of $R(\lambda)$. They can be described as the meromorphic continuation of the Schwartz kernel of the operator on the right-hand side, thus are independent of the choice of $s$ and the weight $G$.
The mapping properties of $ ( P - \lambda)^{-1}$ and formula \eqref{resolvent}
show that the power spectrum \eqref{eq:powersp} has a meromorphic
continuation with the same poles. We note here that our definition
\eqref{eq:powersp} is different from the definition in \cite{Rue} but
the formula there can be expressed in terms of \eqref{eq:powersp}.

{We recall the following general upper bounds on the number of resonances from 
Faure--Sj\"ostrand \cite{FaSj}:
\begin{prop}
Let $\Res(P)$ be the set of Ruelle-Pollicott resonances. Then for any $C_0>0$,
\begin{equation}
\label{hupperbound}
\#(h\Res(P))\cap D(1,C_0 h^{\frac12} )=\mathcal{O}(h^{-n+ \frac12}),
\end{equation}
which is equivalent to \eqref{eq:counting}. In particular,
\begin{equation}
\label{upperbound}
\#\Res(P)\cap\{\mu:|\Re\mu|\leq r,\Im\mu>-C_0\}=\mathcal{O}(r^n). 
\end{equation}
\end{prop}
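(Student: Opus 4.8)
The plan, following Faure--Sj\"ostrand \cite{FaSj} and the scheme of Sj\"ostrand \cite{S} for counting scattering resonances, is to turn \eqref{hupperbound} into a semiclassical eigenvalue count and then to deduce \eqref{eq:counting} and \eqref{upperbound} by rescaling. First I would set $h=1/\Re\mu$: a resonance $\mu$ with $|\Re\mu-r|\lesssim\sqrt r$, $r=h^{-1}$, becomes, after the substitution $z=h\mu$, an eigenvalue of $hP=\Op_h(p)$, $p(x,\xi)=\xi(V(x))$, in the disc $D(1,C_0h^{1/2})$ acting on an anisotropic space. The relevant characteristic manifold is then $\Sigma:=p^{-1}(1)$, and since $p$ is homogeneous of degree one the only part of $\Sigma$ at bounded frequency is the compact submanifold $\mathcal K:=E_0^\ast\cap\Sigma$ (a section of $E_0^\ast$ over $X$); by the hyperbolic splitting of \S\ref{pr} this $\mathcal K$ is exactly the trapped set of $e^{tH_p}|_\Sigma$.

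Next I would realise the resonances as eigenvalues on an anisotropic space adapted to the window: take the weight $G$ of \S\ref{pr} with order $s\sim\sqrt r$ (equivalently, in the rescaled picture, a weight of size $\sim h^{-1/2}m_G\log\langle\xi\rangle$), so that $hP-z:D_{sG}\to H_{sG}$ is Fredholm of index $0$ throughout $D(1,C_0h^{1/2})$ and its eigenvalues there are the rescaled Pollicott--Ruelle resonances. The key structural input, which is the Faure--Sj\"ostrand propagation and radial-point machinery, is that $hP-z$ is invertible on $H_{sG}$ with a controlled inverse \emph{away from} $\mathcal K$: by semiclassical ellipticity outside a conic neighbourhood of $E_s^\ast\cup E_u^\ast\cup E_0^\ast$, and by the radial source/sink estimates powered by $H_p m_G\leq0$ in a conic neighbourhood of $E_s^\ast\cup E_u^\ast$ away from $E_0^\ast$. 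Hence the sole obstruction to global invertibility, uniformly for $z$ in the disc, sits in a fixed neighbourhood of the compact set $\mathcal K$.

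The third step removes that obstruction by a microlocal perturbation of small rank and counts. Set $W:=\neigh(\mathcal K)\cap\{|p-1|<C_1h^{1/2}\}$ with $C_1\gg C_0$; since $\mathcal K$ is an $n$-dimensional submanifold of $T^\ast X$ and $p-1$ vanishes on it with transverse differential, $W$ is an $h^{1/2}$-slab around $\mathcal K$ and $\mathrm{vol}(W)=\mathcal O(h^{1/2})$. Choose $Q=\Op_h(\chi)$, $\chi\geq0$, $\chi=1$ on $\tfrac12 W$, $\supp\chi\subset W$, and $M$ large; then $hP-z+iMQ$ is invertible on $H_{sG}$ with uniformly bounded inverse for $z\in D(1,2C_0h^{1/2})$ and has no eigenvalues in that disc. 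Writing $hP-z=(hP-z+iMQ)(I-iM(hP-z+iMQ)^{-1}Q)$ and approximating $Q=Q_0+\mathcal O_{L^2\to L^2}(h^\infty)$ by a finite rank operator with $\rank Q_0=\mathcal O((2\pi h)^{-n}\mathrm{vol}(W))=\mathcal O(h^{-n+\frac12})$, one sees that the resonances in $D(1,C_0h^{1/2})$ are the zeros of the holomorphic function $z\mapsto\det(I-iM(hP-z+iMQ_0)^{-1}Q_0)$. Jensen's inequality on a slightly larger disc, using $\log|\det|\leq\rank Q_0\cdot\mathcal O(1)$, then gives $\#(h\Res(P)\cap D(1,C_0h^{1/2}))=\mathcal O(h^{-n+\frac12})$, which is \eqref{hupperbound}. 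The equivalence with \eqref{eq:counting} is the substitution $z=h\mu$, and \eqref{upperbound} follows by summing \eqref{eq:counting} over $\mathcal O(\sqrt r)$ overlapping windows covering $\{|\Re\mu|\leq r\}$, which contributes $\sum_{2^j\leq r}\mathcal O(2^{jn})=\mathcal O(r^n)$.

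The hard part is the rank bound in the third step, equivalently, confining the resonances to the trapped set at the right semiclassical scale. It rests on the Faure--Sj\"ostrand escape-function construction (which traps the non-elliptic set inside a fixed neighbourhood of the compact $\mathcal K$) together with the elementary observation that passing to a disc of radius $h^{1/2}$ cuts this neighbourhood down to an energy slab of symplectic volume $\mathcal O(h^{1/2})$, so that the killing perturbation needs only rank $\mathcal O(h^{-n}\cdot h^{1/2})$. The one genuinely delicate point is that $hP-z+iMQ$ be invertible with an $h$-independent bound across the transition layer of $\chi$, where both $|p-z|$ and $M\chi$ are small; it is exactly this uniformity that upgrades the naive $\mathcal O(h^{-n})$ bound to the sharp exponent $-n+\tfrac12$.
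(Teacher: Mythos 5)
The paper does not give a proof of this proposition; it simply quotes the counting bound from Faure--Sj\"ostrand \cite{FaSj}, so there is no in-paper argument to compare against. With that caveat, your sketch does capture the correct heuristic that drives the Faure--Sj\"ostrand bound: the rescaling $z=h\mu$, the identification of the trapped set $\mathcal K=E_0^*\cap p^{-1}(1)$ as the only compact obstruction to invertibility, the observation that the relevant ``energy shell'' $\neigh(\mathcal K)\cap\{|p-1|<Ch^{1/2}\}$ has symplectic volume $\mathcal O(h^{1/2})$, and the phase-space count $\mathcal O(h^{-n}\cdot h^{1/2})$.

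There is, however, a real gap at exactly the point you flag as ``the one genuinely delicate point.'' You assert that $hP-z+iMQ$ is invertible with an \emph{$h$-independent} bound on $H_{sG}$ for $z\in D(1,2C_0h^{1/2})$, and then use $\log|\det(I-iM(hP-z+iMQ_0)^{-1}Q_0)|\lesssim \rank Q_0$ for Jensen. But in the transition layer of $\chi$ (where $\chi$ is strictly between $0$ and $1$) both $|p-\Re z|$ and $M\chi-\Im z$ are of size $\sim h^{1/2}$, so the full symbol is only bounded below by $c\,h^{1/2}$; more globally, the resolvent estimates available in this framework are $\mathcal O(h^{-1})$ (see Lemma \ref{l:3.4}), not $\mathcal O(1)$. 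With an $\mathcal O(h^{-1})$ or even $\mathcal O(h^{-1/2})$ bound, the determinant estimate acquires a factor $\exp(C\,\rank Q_0\,\log(1/h))$, and Jensen then yields $\mathcal O(h^{-n+1/2}\log(1/h))$ rather than $\mathcal O(h^{-n+1/2})$. To remove the logarithm one needs an additional idea (for instance a second-microlocal calculus at scale $h^{1/2}$ near $p^{-1}(1)$, or a singular-value/min-max argument as actually used in \cite{FaSj}) which your sketch does not supply. A second, smaller issue: taking the weight order $s\sim\sqrt r$ is not what happens in the setup of \S\ref{pr} — there $s$ is chosen depending on $C_0$ only, and the $h$-dependence sits in $G(h)$ via $\log|\xi|$, not in $s$.

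In short: the phase-space-volume heuristic and the exponent $n-\tfrac12$ are correct and match the spirit of \cite{FaSj}, but the specific ``finite-rank perturbation plus Jensen'' implementation you propose (in the style of \cite{S}) would, as written, lose a logarithm, because the uniform-invertibility claim that powers the sharp determinant bound is not established. The actual Faure--Sj\"ostrand argument avoids this via a direct estimate on singular values of the conjugated operator rather than via a perturbation determinant, so the two routes are genuinely different, and the gap in yours is precisely where they diverge.
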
}

\subsection{Complex absorbing potentials}
It is convenient to introduce a semiclassical parameter $h$ and to consider the operator $hP\in\Psi_h^1(X)$ (for the definitions of pseudodifferential
operators and wave front sets we
refer to \cite[\S 14.2]{Z2} and  \cite[\S 2.3, Appendix C]{DZ})
 with semiclassical principal symbol $p=\sigma_h(hP)(x,\xi)=\xi(V_x)$.
Then we introduce a semiclassical adaption $G(h)\in\Psi_h^{0+}(X)$ of the operator $G$ with
\begin{equation*}
\sigma_h(G(h))=(1-\chi(x,\xi))m_G(x,\xi)\log|\xi|,
\end{equation*}
where $\chi\in C_0^\infty(T^\ast X)$ is equal to 1 near the zero section. In this way, $H_{sG(h)}=H_{sG}$ but with a new norm depending on $ h $. We also
define an $h$-dependent norm on the domain of $hP$,  $D_{sG(h)} = D_{s G} $:
\begin{equation*}
\|u\|_{D_{sG(h)}} :=\|u\|_{H_{sG(h)}}+\|hPu\|_{H_{sG(h)}}.
\end{equation*}

Now we modify $hP$ by adding a semiclassical pseudodifferential complex absorbing potential $-iQ_\delta\in\Psi_h^0(X)$ which is localized to a neighbourhood of the zero section:
\begin{equation}
\label{eq:WFQd}
\WF_h(Q_\delta)\subset\{|\xi|<\delta\}, \ \ \sigma_h(Q_\delta)>0 \text{ on } \{|\xi|\leq\delta/2\}, \ \ \sigma_h(Q_\delta)\geqslant0 \text{ everywhere}.
\end{equation}
{  (For the definition of $ \WFh ( A ) \subset \overline T^* X $ and
of the compactified cotangent bundle $ \overline T^* X $, 
see \cite[\S C.2]{DZ}.)}
Instead of $P_h(z)=hP-z$, we consider the operator $P^\delta_h(z)=hP-iQ_\delta-z$ acting on $H_{sG(h)}$ which is equivalent to the conjugated operator
\begin{equation}
\label{eq:conj}
P^{\delta,s}_h(z)=e^{sG(h)}P_\delta(z)e^{-sG(h)}=P^\delta_h (z)+s[G(h),hP]+\mathcal{O}(h^2)_{\Psi_h^{-1+}}
\end{equation}
acting on $L^2$. We recall the crucial \cite[Proposition 3.4]{DZ}:
\begin{lem}
\label{l:3.4}
Fix a constant $C_0>0$ and $\delta >0$. Then for $s>0$ large enough depending on $C_0$ and $h$ small enough, the operator
\begin{equation*}
P^\delta_h(z):D_{sG(h)}\to H_{sG(h)}, \ \ \ -C_0h\leq\Im z\leq1, \ \
 |\Re z|\leq 2h^{1/2},
\end{equation*}
is invertible, and the inverse $R^\delta_h(z)$, satisfies
$\|R^\delta_h(z)\|_{H_{sG(h)}\to H_{sG(h)}}\leq Ch^{-1}$.
\end{lem}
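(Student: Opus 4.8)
### Proof Plan for Lemma \ref{l:3.4}

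The plan is to prove invertibility by a semiclassical positive-commutator / energy estimate argument, obtaining the a priori bound $\|u\|_{H_{sG(h)}} \leq Ch^{-1}\|P^\delta_h(z)u\|_{H_{sG(h)}}$ on the domain $D_{sG(h)}$, together with the same bound for the adjoint; since $P^\delta_h(z)$ is Fredholm of index $0$ (by the earlier Fredholm lemma, whose argument is stable under adding the lower-order operator $-iQ_\delta$), the two-sided bound forces invertibility with the stated norm control. Working on the conjugated operator $P^{\delta,s}_h(z)$ acting on $L^2$ is cleaner: by \eqref{eq:conj} it equals $hP - iQ_\delta - z + s[G(h),hP] + \mathcal{O}(h^2)_{\Psi_h^{-1+}}$, and the key structural fact is that the commutator term contributes a symbol $-\tfrac{s}{i}H_p m_G \cdot \log|\xi| \geq 0$ in the region away from the zero section (using $H_p m_G \leq 0$), while $Q_\delta$ contributes positivity near the zero section where $G(h)$ is trivial. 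So the real part of $-\tfrac1h P^{\delta,s}_h(z)$ has nonnegative symbol everywhere modulo terms controlled by $\Im z \geq -C_0 h$.

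The key steps, in order. First, I would write $u \mapsto \langle P^{\delta,s}_h(z)u, u\rangle_{L^2}$ and take imaginary parts, isolating the contributions: $-\Im z\,\|u\|_{L^2}^2$ from the spectral parameter, a term $\langle Q_\delta u, u\rangle$ which is $\geq -Ch\|u\|^2$ by the sharp Gårding inequality (since $\sigma_h(Q_\delta)\geq 0$), and the commutator term $s\,\Re\langle i[G(h),hP]u,u\rangle$ whose symbol $s\,H_p(m_G\log|\xi|) = s(H_p m_G)\log|\xi| + s\, m_G H_p\log|\xi|$ has the leading piece $\leq 0$-valued hence, after a sign, gives a nonnegative contribution up to $\mathcal{O}(h)$ errors again via sharp Gårding; the $m_G H_p \log|\xi|$ piece is a bounded symbol (order $0$) contributing only $\mathcal{O}(h)\|u\|^2$. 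Second, I would combine these to get, for $s$ large enough depending on $C_0$ and $\delta$, an estimate of the form $\|P^{\delta,s}_h(z)u\|_{L^2}\|u\|_{L^2} \geq (c_0 h - C h)\|u\|_{L^2}^2 + (\text{elliptic gain where } \sigma_h(Q_\delta) + s|H_p m_G|\log|\xi| \text{ is bounded below})$; the only region where both positivity sources are weak is a conic neighbourhood of $E_s^*\oplus E_u^*$ away from the zero section, but there one uses the real-principal-type propagation along $H_p$ together with the absorption at the zero section (where $Q_\delta$ is elliptic) to close the estimate — this is exactly the mechanism of \cite[Proposition 3.4]{DZ}, and the addition of $Q_\delta$ only strengthens it. Third, I would run the identical argument for $(P^{\delta,s}_h(z))^*$ — note $Q_\delta^* = Q_\delta + \mathcal{O}(h)$ and the commutator is skew-adjoint up to $\mathcal{O}(h^2)$ — to get the dual bound; combined with the index-$0$ Fredholm property this yields invertibility of $P^\delta_h(z): D_{sG(h)}\to H_{sG(h)}$ with $\|R^\delta_h(z)\|\leq Ch^{-1}$ uniformly in the box $-C_0 h \leq \Im z \leq 1$, $|\Re z| \leq 2h^{1/2}$.

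The main obstacle is handling the borderline region: the conic set near $E_s^*\cup E_u^*$ but at frequencies $|\xi|\to\infty$, where $H_p m_G$ vanishes (since $m_G$ is locally constant there) and where $Q_\delta$ is zero. There the positive commutator gives nothing, and one must instead exploit that the flow $e^{tH_p}$ pushes $E_u^*$ backward and $E_s^*$ forward toward the zero section, so iterating the flow and using that $\sigma_h(Q_\delta)>0$ on $\{|\xi|\leq\delta/2\}$ absorbs the mass — but at $E_u^*$ (resp. $E_s^*$) itself the flow is trapped, and one relies on $m_G$ being precisely $-1$ (resp. $+1$) there so that $\exp(sG(h))$ shifts the effective Sobolev order to make $hP-z$ elliptic in the relevant sense once $s > C_0/\theta$. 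This is precisely why $s$ must be taken large depending on $C_0$. Since this is the content of \cite[Proposition 3.4]{DZ}, which I may invoke, the actual work reduces to checking that the complex absorbing potential $-iQ_\delta$, being of order $0$ with nonnegative symbol supported near the zero section, perturbs none of these estimates adversely — it only adds a nonnegative term exactly where it is needed (replacing, and improving upon, whatever handling of the zero section was used in the unmodified case) — and that all constants remain uniform over the stated $h$-dependent box in $z$.
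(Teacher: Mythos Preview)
The paper does not prove this lemma at all: it is introduced with ``We recall the crucial \cite[Proposition 3.4]{DZ}'' and simply quoted. So there is nothing to compare against beyond that citation, and your plan is considerably more than what the paper itself does.

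Your outline is a reasonable reconstruction of the argument in \cite[Proposition 3.4]{DZ}, but you have one misconception that makes the last paragraph of your plan superfluous. You write that ``the actual work reduces to checking that the complex absorbing potential $-iQ_\delta$ \ldots\ perturbs none of these estimates adversely,'' as if \cite[Proposition 3.4]{DZ} were a statement about $hP-z$ that must be upgraded to $hP-iQ_\delta-z$. In fact \cite[Proposition 3.4]{DZ} \emph{is} the statement about $P^\delta_h(z)=hP-iQ_\delta-z$; the complex absorbing potential is already built in there, and is essential to the argument (it provides ellipticity near the zero section, which is exactly what allows the propagation/radial-point machinery to close). So once you invoke that proposition, there is literally nothing left to check.

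A couple of technical imprecisions in your sketch, should you wish to write it out rather than cite: (i) at the radial sets $E_u^\ast$, $E_s^\ast$ the operator $hP-z$ is \emph{not} elliptic (indeed $p=0$ there), and the correct mechanism is the high/low-regularity radial point estimates, with the threshold on $s$ coming from the subprincipal symbol of the conjugated operator --- your phrase ``elliptic in the relevant sense'' obscures this; (ii) the flow near $E_u^\ast$ and $E_s^\ast$ does not reach the zero section, so absorption by $Q_\delta$ plays no role there --- the radial estimates are self-contained sources/sinks for the propagation. With those clarifications your three-step plan (Fredholm index zero, a priori estimate via propagation $+$ radial estimates $+$ absorption, same for the adjoint) is exactly the structure of the proof in \cite{DZ}.
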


\subsection{Finite rank approximation}
\label{fra}
For our application we need to make $ Q_\delta $ a finite rank
operator.
It is also convenient to make a further assumption on the
symbol of $ Q_\delta$.
As long as \eqref{eq:WFQd} holds,
Lemma \ref{l:3.4} still applies.

From now on we fix $ \delta > 0 $ and put
\[  Q = Q_\delta = f ( - h^2 \Delta_g ), \ \ f \in  \CIc ( ( -2 \delta, 2 \delta ),
[ 0 , 1 ]) , \ \
f ( s ) = 1, \ \ |s |\leq \delta . \]
Then (see for instance \cite[Theorem 14.9]{Z2})
\begin{equation}
\label{finiterank}
\rank Q=\mathcal O (h^{-n}),  \ \ Q \geq 0 , \ \ \sigma_h ( Q ) = f ( |\xi|_g^2 ).
\end{equation}
For technical convenience only (so that we can
cite easily available results in the proof of Proposition \ref{flattracees}
in the appendix)
we make an additional assumption on $ f$: for some $ 0 < \alpha < \frac12$,
\begin{equation}
\label{eq:condfk}
| f^{(k)} ( x ) | \leq C_k f ( x ) ^{1- \alpha} .
\end{equation}
This can be achieved by building $ f$ from functions of the form equal to $
e^{-1/x} $ for $ x > 0 $ and $ 0 $ for $ x \leq 0 $. (In that case
\eqref{eq:condfk} holds for all $ \alpha > 0 $.)

Lemma \ref{l:3.4} shows that
for  $-C_0h\leq\Im z\leq 1
$, $|\Re z|\leq 2h^{1/2}$, 
\begin{equation}
\label{eq:wideP} 
\widetilde{P}_h(z): =hP-iQ-z, 
\end{equation} 
is also invertible and its inverse $\widetilde{R}_h(z)$ satisfies
\begin{equation}
\label{modifiedresolvent}
\|\widetilde{R}_h(z)\|_{H_{sG(h)}\to H_{sG(h)}}\leq Ch^{-1}.
\end{equation}
In the upper half plane we have the following estimate on the original resolvent:
\begin{equation}
\label{eq:orres}
 \|R_h(z)\|_{H_{sG(h)}\to H_{sG(h)}}\leq Ch^{-1}, \ \ \
C_1h\leq\Im z\leq1, \ \ |\Re z|\leq 2h^{1/2},
\end{equation}
provided that $ C_1 $ is large enough. This follows from the
Fredholm property and the estimate $ \Im \langle e^{ s G ( h ) } P_h ( z )
e^{ -s G ( h ) } u , u \rangle_{L^2 }
\geq h \| u\|_{L^2} $, $ \Im z > C_1 h $ -- see \eqref{eq:conj}.

\subsection{Wavefront set condition}
\label{wavefront}
We need to study the wavefront set and semiclassical wavefront set of $R_h(z)$ and $\widetilde{R}_h(z)$. For the definitions and notations of the wavefront sets and the semiclassical wavefront sets, we refer to \cite[Chapter VIII]{H}, \cite[Section 8.4]{Z2} and \cite[Appendix C]{DZ} and \cite{A}.

We recall the following wavefront set condition and semiclassical wavefront set conditions for the resolvent $R(\lambda)$ and $\widetilde{R}_h(z)$ from \cite[Proposition 3.3]{DZ}. {  (For the definition of the standard wave front set $ \WF $
see \cite[\S C.1]{DZ} and for the definition of the twisted 
wave front set $ \WF' $, \cite[(C.2)]{DZ} -- the reason for the twist
is to have $ \WF ( I ) $ equal to the diagonal in $ T^*X \times T^* X $.)}
\begin{prop}
\label{p:3.3}
Let $C_0$ and $s$ be as above and assume $\lambda$ is not a resonance with $\Im\lambda>-C_0$, then
\begin{equation}
\label{wavefrontset}
\WF'(R(\lambda))\subset\Delta(T^\ast X)\cup\Omega_+\cup(E_u^\ast\times E_s^\ast),
\end{equation}
where $\Delta(T^\ast X)$ is the diagonal in $T^\ast X$ and $\Omega_+$ is the positive flow-out of $e^{tH_p}$ on $\{p=0\}$:
\begin{equation}
\label{eq:Omegapl}
\Omega_+=\{(e^{tH_p}(x,\xi),x,\xi) \, : \, t\geqslant0, \ p(x,\xi)=0\}.
\end{equation}
Also, if $R_h(z)=h^{-1}R(z/h)$, then
\begin{equation}
\label{semiwf1}
\WFh'(R_h(z))\cap T^\ast(X\times X)\subset\Delta(T^\ast X)\cup\Omega_+\cup(E_u^\ast\times E_s^\ast),
\end{equation}
and
\begin{equation}
\WFh'(R_h(z))\cap S^\ast(X\times X)\subset\kappa(\Delta(T^\ast X)\cup\Omega_+\cup(E_u^\ast\times E_s^\ast)\setminus\{0\}).
\end{equation}
\end{prop}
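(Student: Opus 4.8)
The plan is to extract the wave front set of $ R ( \lambda ) $, and of $ R_h ( z ) $, from the two resolvent identities $ ( P - \lambda ) R ( \lambda ) = R ( \lambda ) ( P - \lambda ) = I $, read as equations for the Schwartz kernel in the $ x $- and $ y $-variable respectively, by means of the three standard microlocal mechanisms: elliptic regularity, propagation of singularities, and radial-point estimates. Recall that $ P = - i V $ has real principal symbol $ p ( x , \xi ) = \xi ( V_x ) $, homogeneous of degree one, with characteristic set $ \Sigma := p^{-1} ( 0 ) = E_s^* \oplus E_u^* $, and that the null bicharacteristic flow of $ p $ is $ e^{ t H_p } $, the symplectic lift of $ \varphi_t $ from \S\ref{pr}. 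Away from $ \Sigma $ the operator $ P - \lambda $ is elliptic in $ x $ and its transpose is elliptic in $ y $, so elliptic regularity applied to the two identities confines $ \WF' ( R ( \lambda ) ) \setminus \Delta ( T^* X ) $ to $ \Sigma \times \Sigma $, the diagonal itself being forced only by $ \WF' ( I ) = \Delta ( T^* X ) $.

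On $ \Sigma \setminus ( E_s^* \cup E_u^* ) $ the lifted flow is fixed-point-free in $ S^* X $, so one applies propagation of singularities; the direction is \emph{forward}, as one sees from the representation $ R ( \lambda ) = i \int_0^\infty e^{ i \lambda t } \varphi_{-t}^* \, dt $ valid for $ \Im \lambda \gg 1 $ (kernel supported on forward graphs), a one-sidedness preserved under the meromorphic continuation because $ H_p m_G \leq 0 $ fixes the sign of the imaginary part of the symbol of $ e^{ s G } ( P - \lambda ) e^{ - s G } $ away from the radial sets. This produces the positive flow-out $ \Omega_+ $ of \eqref{eq:Omegapl}. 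At the radial sets, by the convergence statements of \S\ref{pr}, $ E_u^* $ is a radial \emph{sink} and $ E_s^* $ a radial \emph{source} for $ e^{ t H_p } |_\Sigma $, while on $ H_{ s G } $ the weight $ m_G $ (equal to $ + 1 $ near $ E_s^* $, $ - 1 $ near $ E_u^* $) makes the space $ H^s $-regular near $ E_s^* $ and $ H^{ -s } $-regular near $ E_u^* $. Choosing $ s $ large depending on $ C_0 $ puts us above the threshold in the radial-\emph{source} estimate at $ E_s^* $, which (since $ ( P_x - \lambda ) R ( \lambda ) $ has kernel smooth off the diagonal) removes $ E_s^* $ from the off-diagonal $ x $-wave front, and dually removes $ E_u^* $ from the off-diagonal $ y $-wave front. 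Combining with forward propagation, the off-diagonal wave front is confined to the closure of $ \Omega_+ $, whose only points beyond $ \Omega_+ $ itself lie in $ E_u^* \times E_s^* $ --- a covector reaching $ E_u^* $ in $ x $ as $ t \to + \infty $ forces the incoming covector into $ E_s^* $ in $ y $. This is \eqref{wavefrontset}; the term $ E_u^* \times E_s^* $ is also what the Laurent expansion $ R ( \lambda ) \sim ( \lambda - \lambda_0 )^{-1} \Pi_{ \lambda_0 } $ at a resonance predicts, since resonant states have wave front in $ E_u^* $ and co-resonant states in $ E_s^* $.

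For \eqref{semiwf1} and the statement over $ S^* ( X \times X ) $ I would rerun the same trichotomy in the semiclassical calculus on the fibre-compactified bundle --- semiclassical elliptic regularity, propagation, and radial estimates --- all licensed by the uniform bound $ \| R_h ( z ) \|_{ H_{ s G ( h ) } \to H_{ s G ( h ) } } \leq C h^{-1} $. For $ z $ in the stated box the semiclassical characteristic set is $ \{ p = 0 \} $, so the finite part $ \WFh' ( R_h ( z ) ) \cap T^* ( X \times X ) $ is controlled as in the classical case, while over fibre infinity the homogeneity of $ p $ and the relation $ R_h ( z ) = h^{-1} R ( z / h ) $ reduce $ \WFh' ( R_h ( z ) ) \cap S^* ( X \times X ) $ to the classical bound pushed forward by $ \kappa $.

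The main obstacle is the radial-point analysis. Away from $ E_s^* \cup E_u^* $ this is the classical Duistermaat--H\"ormander theory; at the radial sets one must verify the source/sink structure from Anosov hyperbolicity, compute the threshold Sobolev order explicitly in terms of $ C_0 $ so as to fix an admissible $ s $, and run the source and sink radial estimates (and their semiclassical analogues) carefully enough that the only off-diagonal, off-$ \Omega_+ $ wave front that survives is $ E_u^* \times E_s^* $ --- in particular, that no $ E_s^* \times E_s^* $ or $ E_u^* \times E_u^* $ contribution remains. This is exactly where the defining properties $ m_G = \pm 1 $ near $ E_s^*, E_u^* $ and $ H_p m_G \leq 0 $ of the escape function are used decisively.
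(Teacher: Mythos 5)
The paper does not prove Proposition~\ref{p:3.3}: it is imported verbatim from Dyatlov--Zworski~\cite[Proposition 3.3]{DZ}, and the text around it in this paper is only a quotation, not an argument. Your outline is, in broad strokes, a fair reconstruction of the proof in~\cite{DZ}. That proof is indeed organized around the three mechanisms you name, applied to the two resolvent identities read in the $x$- and $y$-variables separately: elliptic estimates off the characteristic set $\Sigma=E_s^*\oplus E_u^*$, positive-commutator propagation along $H_p$ on $\Sigma\setminus(E_s^*\cup E_u^*)$, and radial estimates at the source $E_s^*$ and sink $E_u^*$ of $e^{tH_p}|_\Sigma$, with the escape function $m_G$ making $H_{sG}$ high-regularity near $E_s^*$ and low-regularity near $E_u^*$ and with $s$ taken large depending on $C_0$ to clear the thresholds; the semiclassical and fibre-infinity statements come from running the uniform (semiclassical and scaling-homogeneous) versions of the same three estimates.

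Two cautions on your write-up, both of the exposition-versus-gap kind. First, the phrase \emph{removes $E_s^*$ from the off-diagonal $x$-wave front} overstates what the radial estimates give: $E_s^*$ and $E_u^*$ are flow-invariant, so both appear as $x$- and $y$-components of points in $\Omega_+$. What is actually excluded is any contribution to $\WF'(R(\lambda))$ inside $\Sigma\times\Sigma$ that is not already in $\Delta(T^*X)\cup\Omega_+$ and has $x$-component near $E_s^*$ (or, dually, $y$-component near $E_u^*$); the residual piece $E_u^*\times E_s^*$ is the attracting limit of $\Omega_+$ as $t\to+\infty$ and cannot be removed, as your Laurent-expansion remark correctly indicates. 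Second, the claim that the forward direction of propagation survives the meromorphic continuation ``because $H_pm_G\le 0$ fixes the sign of $\Im\sigma(e^{sG}(P-\lambda)e^{-sG})$'' is a heuristic rather than a correct mechanism: the sign of $\Im\lambda$ varies over the strip, so it is not a definite sign of the conjugated symbol that does the work, but the monotonicity of $m_G$ along $H_p$ entering the positive-commutator estimate directly; once $s$ is fixed large enough, the propagation and radial estimates hold uniformly for $\Im\lambda>-C_0$ and that is what propagates the flow-out bound from $\Im\lambda\gg1$ to the whole strip. These are not fatal --- the skeleton is right --- but a complete proof requires citing or reproving the quantitative high/low radial-point estimates of~\cite{DZ} (or of the scattering-resonances monograph~\cite{res}) rather than the heuristics you give.
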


Now we determine the wavefront set and the semiclassical wavefront set of $\widetilde{R}_h(z)$. First, 
{  by inserting the resolvent formula
$$\widetilde{R}_h(z)=R_h(z)+iR_h(z)Q\tilde{R}_h(z)$$
into another resolvent formula
$$\widetilde{R}_h(z)=R_h(z)+i\tilde{R}_h(z)QR_h(z),$$}
we write
$$\widetilde{R}_h(z) = R_h ( z ) + i R_h ( z ) Q R_h ( z ) -
R_h ( z ) Q \widetilde{R}_h ( z ) Q R_h ( z ).$$
Then since $Q$ is a smoothing operator, $\WF(Q)=\emptyset$, we have
$$\WF'(R_h(z)QR_h(z))\subset E_u^\ast\times E_s^\ast.$$
Similarly, since $Q\widetilde{R}_h(z)Q$ is also a smoothing operator,
$$\WF'(R_h(z)Q\widetilde{R}_h(z)QR_h(z))\subset E_u^\ast\times E_s^\ast.$$
Therefore we get the same wavefront set condition as $R_h(z)$:
\begin{equation}
\WF'(\widetilde{R}_h(z))\subset\Delta(T^\ast X)\cup\Omega_+\cup(E_u^\ast\times E_s^\ast).
\end{equation}

For the semiclassical wavefront set, we already know from \cite[Proposition 3.4]{DZ} that
\begin{equation}
\label{eq:WFRh} \WFh'(\widetilde{R}_h(z))\cap T^\ast(X\times X)\subset\Delta(T^\ast X)\cup\Omega_+.
\end{equation}
Moreover, since $\WFh'(Q)\cap S^\ast(X\times X)=\emptyset$, we have
$$\WFh'(R_h(z)QR_h(z))\subset E_u^\ast\times E_s^\ast,$$
and similarly, $\WFh'(Q\widetilde{R}_h(z)Q)\cap S^\ast(X\times X)=\emptyset$. Therefore
\begin{equation}
\label{eq:WFhR}
\WFh'(\widetilde{R}_h(z))\cap S^\ast(X\times X)\subset
\kappa(\Delta(T^\ast X)\cup\Omega_+\cup(E_u^\ast\times E_s^\ast)\setminus\{0\}).
\end{equation}

\subsection{Flat trace}
\label{flat}
Consider an operator $B:C^\infty(X)\to\mathcal{D}'(X)$ with
\begin{equation}\label{ftc}
\WF'(B)\cap\Delta(T^\ast X)=\emptyset.
\end{equation}
Then we can define the flat trace of $B$ as
\begin{equation}
\label{eq:flat}
\tr^\flat B=\int_X(\iota^\ast K_B)(x)dx:=\langle\iota^\ast K_B,1\rangle
\end{equation}
where $\iota:x\mapsto (x,x)$ is the diagonal map, $K_B$ is the Schwartz kernel of $X$ with respect to the density $dx$ on $X$. The pull back $\iota^\ast K_B\in\mathcal{D}'(X)$ is well-defined under the condition \eqref{ftc} (see \cite[Section 8.2]{H}).

\subsection{Dynamical zeta function and Guillemin's trace formula}
The zeta function $ \zeta_1 $ defined in \eqref{zeta}
is closely related to the Ruelle zeta function -- see \cite{GLP},\cite{DZ}
and references given there.
The right hand side in \eqref{zeta} converges for $\Im\lambda>C_1$ and
it continues analytically to the entire plane.
 The Pollicott-Ruelle resonances are exactly the zeros of $ \zeta_1 $.
  We recall the (Atiyah--Bott--)Guillemin's trace formula \cite{Gu} (see \cite[Appendix B]{DZ} for a proof):
\begin{equation}
\label{Guillemin}
\tr^\flat e^{-itP}=\sum_{\gamma\in\mathcal{G}}\frac{T_\gamma^\#\delta(t-T_\gamma)}{|\det(I-\mathcal{P}_\gamma)|}, \ \ \   t>0.
\end{equation}
Therefore we have
\begin{equation*}
\frac{d}{d\lambda}\log\zeta_1(\lambda)=\frac{1}{i}\sum_\gamma\frac{T_\gamma^\# e^{i\lambda T_\gamma}}{|\det(I-\mathcal{P}_\gamma)|}
=\frac{1}{i}\int_0^\infty e^{it\lambda}\tr^\flat e^{-itP}dt.
\end{equation*}
From \eqref{Guillemin}, $\tr^\flat e^{-itP}=0$ on $(0,t_0)$ if $t_0<\inf\{T_\gamma:\gamma\in\mathcal{G}\}$. Formally, we can write (see \cite[\S 4]{DZ} for the
justification)
\begin{equation*}
\frac{d}{d\lambda}\log\zeta_1(\lambda)
=\frac{1}{i}\int_{t_0}^\infty e^{it\lambda}\tr^\flat e^{-itP}dt
=\tr^\flat\left(\frac{1}{i}e^{-it_0(P-\lambda)}
\int_0^\infty e^{it\lambda}e^{-itP}dt\right).
\end{equation*}
Therefore by \eqref{resolvent} we have
\begin{equation}
\label{zetaresolvent}
\frac{d}{d\lambda}\log\zeta_1(\lambda)=\tr^\flat(e^{-it_0(P-\lambda)}(P-\lambda)^{-1}).
\end{equation}
The wavefront set condition \eqref{wavefrontset} shows that
\begin{equation*}
\begin{split}
& \WF'(e^{-it_0(P-\lambda)}(P-\lambda)^{-1})\subset\\
& \ \ \ \ \ \ \{((x,\xi),(y,\eta)) \; :\;
(e^{-t_0H_p}(x,\xi),(y,\eta))\in\Delta(T^\ast X)\cup\Omega_+\cup(E_u^\ast\times E_s^\ast)\}
\end{split}
\end{equation*}
which does not intersect $\Delta(T^\ast X)$. This justifies taking the
flat trace \eqref{eq:flat}.

Therefore $\frac{d}{d\lambda} \log\zeta_1$ has a meromorphic continuation to all of $\mathbb{C}$ with simple poles and
positive integral residues. That is equivalent to having a holomorphic
continuation of $ \zeta_1 $. This strategy for proving Smale's conjecture
on the meromorphy of Ruelle zeta functions is the starting point of
our proof of the local trace formula.

\section{Estimates on flat traces}
\label{estflt}

The key step in the proof of the trace formula is the following estimate on the flat trace of the propagated resolvent.

\begin{prop}
\label{flattracees}
Let $\widetilde{P}_h(z)$ and $\widetilde{R}_h(z)$ {  be given by 
\eqref{eq:wideP} and \eqref{modifiedresolvent},} and let $t_0\in(0,\inf T_\gamma)$. Then
\begin{equation}
\label{eq:Tofz}  T(z ) := \tr^\flat(e^{-it_0h^{-1}\widetilde{P}_h(z)}\widetilde{R}_h(z)),
\end{equation}
is well defined and holomorphic in $ z $ when $ - C_0 h \leq \Im z \leq  1, $
$ |\Re z | \leq C_1 h^{\frac12} $. Moreover, in that range of $ z $,
\begin{equation}
\label{eq:flattr1}
 T ( z ) = \mathcal O_{C_0, C_1 } (h^{-2n-1}).
\end{equation}
\end{prop}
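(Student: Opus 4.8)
The plan is to express $T(z)$ as an honest integral over the diagonal of a Schwartz kernel, and then estimate that kernel using the semiclassical wavefront information from \eqref{eq:WFhR} together with the resolvent bound \eqref{modifiedresolvent}. First I would write, formally as in \eqref{zetaresolvent},
\begin{equation*}
e^{-it_0h^{-1}\widetilde P_h(z)}\widetilde R_h(z)=\frac1i\int_{0}^{\infty}e^{it_0h^{-1}z}\,e^{i(t-t_0)h^{-1}z}\,e^{-i(t-t_0)h^{-1}hP}Q^{\flat}\cdots
\end{equation*}
— more carefully, using $\widetilde R_h(z)=\tfrac ih\int_0^\infty e^{ith^{-1}z}e^{-ith^{-1}\widetilde P_h(0)}\,dt$ valid for $\Im z$ large, so that $e^{-it_0h^{-1}\widetilde P_h(z)}\widetilde R_h(z)=\tfrac ih\int_{t_0}^\infty e^{ith^{-1}z}\,U(t)\,dt$ with $U(t):=e^{-ith^{-1}\widetilde P_h(0)}$. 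Guillemin's formula \eqref{Guillemin} (applied to the perturbed generator, using $t_0<\inf T_\gamma$ and the fact that $Q$ is smoothing hence does not contribute new closed orbits in $(0,t_0)$) shows $\tr^\flat U(t)=0$ for $t\in(0,t_0)$, so one may replace $\int_{t_0}^\infty$ by $\int_0^\infty$ and identify $T(z)$ with $\tr^\flat(\tfrac ih\int_0^\infty e^{ith^{-1}z}U(t)\,dt)$. The wavefront bound $\WF'(\widetilde R_h(z))\subset\Delta\cup\Omega_+\cup(E_u^\ast\times E_s^\ast)$ pushed forward by $e^{-it_0H_p}$ misses the diagonal (as already noted after \eqref{zetaresolvent}), so the flat trace is well defined; holomorphy in $z$ on the stated region follows from the meromorphic/holomorphic continuation of the resolvent and the absence of poles of $\widetilde R_h(z)$ there (Lemma \ref{l:3.4} and \eqref{modifiedresolvent}).

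The main work is the bound \eqref{eq:flattr1}. Here I would follow Dyatlov--Zworski's scheme: decompose the kernel of $\widetilde R_h(z)$ according to the pieces of its wavefront set. Near the diagonal piece $\Delta(T^\ast X)$, after composing with $e^{-it_0H_p}$ (which moves $\Delta$ away from itself since $\varphi_{t_0}$ has no fixed points for $t_0$ less than the shortest period) we get no contribution to the diagonal restriction; for the flow-out piece $\Omega_+$ and the $E_u^\ast\times E_s^\ast$ piece the restriction to the diagonal is controlled by closed-orbit geometry, but since we only need an \emph{upper bound} rather than an identity, the cleanest route is: localize the integral $\int_0^\infty e^{ith^{-1}z}U(t)\,dt$ dyadically in $t$, use that for $t$ in a compact interval $[t_0,T]$ the operator $U(t)$ has a kernel whose diagonal restriction is $\mathcal O(h^{-2n})_{L^1}$ by the semiclassical wavefront calculus (each of the $2n$ fibre variables giving a factor $h^{-1}$ from a stationary/non-stationary phase expansion on $\{p=0\}$), and use the bound \eqref{modifiedresolvent} $\|\widetilde R_h(z)\|\le Ch^{-1}$ to sum the tail $t\ge T$, producing the extra single factor $h^{-1}$. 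Altogether $h^{-2n}\cdot h^{-1}=h^{-2n-1}$. This is precisely the point where the technical assumption \eqref{eq:condfk} on $f$ and the finite-rank structure \eqref{finiterank} of $Q$ enter: they let us cite the available kernel estimates (this is deferred to the appendix, Proposition \ref{flattracees}'s proof).

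In carrying this out the first genuine step is to justify the manipulation above of $\tr^\flat$ under the integral and the replacement of the lower limit $t_0$ by $0$; this needs the wavefront computations of \S\ref{wavefront} and the fact, from Guillemin's formula, that $\tr^\flat e^{-itP^{\delta}}$ vanishes on $(0,t_0)$ even for the complex-absorbed generator. The hard part, and the true heart of the proof, is the $L^1$-bound $\|\iota^\ast K_{U(t)}\|_{L^1}=\mathcal O(h^{-2n})$ uniformly for $t$ in compact sets and its compatibility with the $h^{-1}$ resolvent bound on the tail: one must control how the constant in the stationary-phase expansion depends on $t$ (it can grow, but only polynomially/exponentially, which is absorbed once we cut the $t$-integral at a fixed large $T$ and pay $h^{-1}$ for the remainder via \eqref{modifiedresolvent}). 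The uniformity in $z$ on the box $-C_0h\le\Im z\le 1$, $|\Re z|\le C_1h^{1/2}$ is then automatic because all the constants in Lemma \ref{l:3.4} and Proposition \ref{p:3.3} are already uniform there. I expect the delicate bookkeeping of the $t$-dependence of the phase-expansion remainders — rather than any conceptual difficulty — to be the main obstacle, which is why the detailed verification is relegated to Appendix.
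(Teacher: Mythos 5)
Your strategy — decomposing in time $t$, establishing an $L^1$ bound on the diagonal of $K_{U(t)}$ for $t$ compact, and using \eqref{modifiedresolvent} to ``sum the tail'' — is genuinely different from the paper's, but it runs into two real obstacles.

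First, the tail argument does not make sense in the given range of $z$. When $-C_0 h\leq\Im z<0$, the integral $\frac{i}{h}\int_T^\infty e^{ith^{-1}z}U(t)\,dt$ does not converge in any operator norm: $e^{ith^{-1}z}$ grows exponentially in $t$. The operator $\widetilde R_h(z)$ in this strip exists only as an analytic continuation (via Fredholm theory, Lemma \ref{l:3.4}), not as the value of a convergent time integral, so the resolvent bound \eqref{modifiedresolvent} cannot be used to ``pay $h^{-1}$ for the remainder.'' There is no well-defined remainder to pay for. The paper avoids this by never splitting the resolvent in time at all: it works directly with the operator $e^{-it_0h^{-1}\widetilde P_h(z)}\widetilde R_h(z)$ and its Schwartz kernel, localizes microlocally using a partition of unity, and splits into a region near fiber infinity — where propagation estimates and the wavefront set relations \eqref{eq:WFRh}--\eqref{eq:WFhR} give $\mathcal O(h^\infty)$ — and a compact region in $T^*X$, where the operator norm estimate $\|\widetilde R_h(z)\|\leq Ch^{-1}$ applies directly. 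The $h^{-2n}$ then comes from the $(2\pi h)^{-2n}$ prefactor in the semiclassical Fourier inversion formula \eqref{eq:iota}, not from a stationary-phase analysis of $U(t)$.

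Second, the claim that Guillemin's formula makes $\tr^\flat U(t)=0$ for $t\in(0,t_0)$ with $U(t)=e^{-ith^{-1}(hP-iQ)}$ is not justified. Guillemin's formula \eqref{Guillemin} is a statement about the pullback propagator $e^{-itP}=\varphi_{-t}^*$, whose kernel is a $\delta$-measure on the graph of the flow; adding the complex absorbing potential destroys this structure. The operator $U(t)$ is not a Fourier integral operator associated to the flow graph in the same way, and there is no reason its flat trace should vanish for small $t$. The remark that ``$Q$ is smoothing hence does not contribute new closed orbits'' is intuition, not an argument, and in any case the correct analogue would need a quantitative kernel estimate that is not in the paper (and is not obviously true). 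The paper sidesteps this entirely by taking $t_0$ small only to guarantee that $e^{-t_0 H_p}$ pushes the wavefront set of $\widetilde R_h(z)$ off the diagonal — that is, $t_0<\inf T_\gamma$ is used to make the flat trace \emph{defined}, not to kill a time interval.

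In short: the holomorphy and well-definedness discussion is fine, but the core estimate needs a different mechanism. You should abandon the time decomposition and instead follow the Fourier-inversion / microlocal partition route: the factor $h^{-2n-1}$ is simply $(2\pi h)^{-2n}$ from the semiclassical Plancherel volume times $h^{-1}$ from \eqref{modifiedresolvent}, with the near-infinity contributions suppressed by propagation.
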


The proof is based on a quantitative study of the proof of \cite[Theorem 8.2.4]{H} and on the wave front properties established
in \cite[\S 3.3,3.4]{DZ}. The general idea is the following: the wave
front set condition shows that the trace is well defined. The analysis
based on the properties of the semiclassical wave front set shows more:
the contribution from a microlocal neighbourhood of fiber infinity is $ \mathcal O ( h^\infty ) $. The contribution away from fiber infinity can be controlled
using the norm estimate on $ \widetilde R_h ( z ) $. Since the weights
defining the $ H_{sG} $ spaces are supported near infinity, the norm
estimates are effectively $ L^2 $ estimates. 

For the proof of \eqref{eq:flattr1} we first review
the construction of the flat trace under the wave front set
condition. Suppose that $ u \in \mathcal D' ( X \times X ) $
satisfies the (classical) wave front condition
\begin{equation}
\label{eq:wfcond}   \WF ( u ) \cap N^* \Delta ( X ) = \emptyset, \ \  \Delta ( X )
= \{ ( x, x ) :  x\in X \} \subset X \times X.
\end{equation}
If $ u $ is a Schwartz kernel of an operator $ T $ then
$ \tr^\flat T :=  \langle \iota^* u , 1 \rangle $, where $ \iota :
\Delta ( X ) \hookrightarrow X \times X $. We will recall why \eqref{eq:wfcond}
allows the definition $ \iota^* u $.
For any $x_0\in X$, we can choose a neighbourhood $U$ of $x_0$ in $X$ equipped with a local coordinate patch. For simplicity, we abuse the notation and assume $x_0\in U\subset \mathbb{R}^n$. Then $\iota(x_0)=(x_0,x_0)\in U\times U\subset \mathbb{R}^n\times\mathbb{R}^n$. The conormal bundle to the diagonal is locally given by
$$N_\iota=
\{(x,x,\xi,-\xi)\in ( U\times U) \times
( \mathbb{R}^n\times \mathbb{R}^n) \}.$$
Put $ \Gamma := \WF ( u ) $ and
$ {\Gamma}_{(x,y)}=\{(\xi,\eta) : (x, y , \xi, \eta )\in\Gamma\}$. Then
\[ {\Gamma}_{(x_0,x_0)}\cap\{(\xi,-\xi):\xi\in\mathbb{R}^n,\xi\neq0\}=\emptyset.
\]
Since ${\Gamma}_{(x_0,x_0)}$ is closed, we can find a conic neighbourhood, $V$, of $ {\Gamma}_{(x_0,x_0)}$ in $\mathbb{R}^n\times\mathbb{R}^n\setminus0$ such that
$$V\cap\{(\xi,-\xi):\xi\in\mathbb{R}^n,\xi\neq0\}=\emptyset.$$
We can also find a compact neighbourhood $Y_0$ of $(x_0,x_0)$ such that $V$ is a neighbourhood of ${\Gamma}_{(x,y)}$ for every $(x,y)\in Y_0$.
Next we choose a 
neighbourhood $X_0$ of $x_0$ such that $X_0\times X_0\Subset Y_0$. Then we have for every $x\in X_0, (\xi,\eta)\in V$,
\begin{equation}
\label{eq:nonstat} {}^t\iota'(x)\cdot(\xi,\eta)=\xi+\eta\neq0.
\end{equation}

Moreover, we can choose $ V$ so that its complement, $\complement V $, is
a small conic neighbourhood of $\{(\xi,-\xi):\xi\in\mathbb{R}^n,\xi\neq0\}$. In particular
there exists a constant $C>0$ such that in ${\complement V}$, $C^{-1}|\eta|\leq|\xi|\leq C|\eta|$. We can also assume that
\begin{equation}
\label{eq:CV}
\complement V = - \complement V .
\end{equation}

Finally we choose $\psi(x)\in C^\infty(U)$ equal to 1 on $X_0$ such that $\varphi(x,y)=\psi(x)\psi(y)\in C_0^\infty(Y_0)$, then for any $\chi\in C_0^\infty(X_0)$, $u\in C^\infty(X\times X)$, we have
\begin{equation}
\label{eq:extend} \langle\iota^\ast u,\chi\rangle =\langle\iota^\ast(\varphi u),\chi\rangle
=(2\pi)^{-2n}\int\widehat{\varphi u}(\xi,\eta)I_\chi(\xi,\eta)d\xi d\eta,
\end{equation}
where
$$I_\chi(\xi,\eta)=\int \chi(x)e^{i\langle\iota(x),(\xi,\eta)\rangle}dx
=\int\chi(x)e^{ix\cdot(\xi+\eta)}dx.$$
We claim that as long as \eqref{eq:wfcond} holds the right hand
side of \eqref{eq:extend} is well defined and hence the pull back
$ \iota^* u $ is a well defined distribution.

To see this, we first notice that if $(\xi,\eta)\in V$, then
\eqref{eq:nonstat} shows that the phase is not stationary and hence,
$|I_\chi(\xi,\eta)|\leq C_{N,\chi}(1+|\xi|+|\eta|)^{-N} $, for all $N$.
On the other hand, we have
\begin{equation}
\label{eq:widehat} |\widehat{\varphi u}(\xi,\eta)|=\left|\int\psi(x)\psi(y)u(x,y)e^{-i(x\cdot\xi+y\cdot\eta)}dxdy\right|.\end{equation}
The construction of $ V $ and \eqref{eq:wfcond} imply  that if $(\xi,\eta)\not\in V$, then
$|\widehat{\varphi u}(\xi,\eta)|\leq C_N(1+|\xi|+|\eta|)^{-N}$, for all $ N$.
When $ ( \xi, \eta ) \in V $ then, there exists $M>0$ such that
$|\widehat{\varphi u}(\xi,\eta)|\leq C_N(1+|\xi|+|\eta|)^M$.
Therefore $\langle\iota^\ast u,\chi\rangle$ is well defined. Now to define $\langle\iota^\ast u,1\rangle$, we first choose a finite partition of unity $1=\sum\chi_j$ where $\chi_j$ is constructed as above for some $x_j\in X$ (playing the role of $x_0$)
and then choose the corresponding $\psi_j$'s (playing the role of $ \psi $).
This concludes our review of the proof 
that
$ \tr^\flat T = \langle \iota^* u  , 1\rangle  $ is well defined when \eqref{eq:wfcond}
holds.

All of this can be applied to $ u = K$, the
Schwartz kernel of  $e^{-it_0h^{-1}\widetilde{P}_h(z)}\widetilde{R}_h(z)$,
with quantitative bounds in terms of $ h $.
We first estimate the wave front set of
$e^{-it_0h^{-1}\widetilde{P}_h(z)}\widetilde{R}_h(z)$. For that we
need the following
\begin{lem}
\label{l:prop}
For $ t \geq 0 $,
\begin{equation*}
\begin{split}
 \WFh' ( e^{ - i t h^{-1} \widetilde P_h ( z ) } )
\cap T^\ast(X\times X)&
\subset \{ ( e^{ t_0 H_p } ( x, \xi ) , ( x, \xi ) ) : ( x ,\xi ) \in T^\ast X \},\\
\WFh' ( e^{ - i t h^{-1} \widetilde P_h ( z ) } )
\cap S^\ast(X\times X)&
\subset \kappa(\{ ( e^{ t_0 H_p } ( x, \xi ) , ( x, \xi ) ) : ( x ,\xi ) \in T^\ast X\setminus\{0\} \}).
\end{split}
\end{equation*}
\end{lem}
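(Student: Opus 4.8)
The plan is to compare the propagator $U(t):=e^{-ith^{-1}\widetilde P_h(z)}$ with the pull-back group $\varphi_{-t}^*=e^{-itP}$, whose twisted wave front set is precisely the graph of $e^{tH_p}$, and to read off $\WFh'(U(t))$ from a Duhamel expansion. Since $\widetilde P_h(z)=hP-iQ-z$, the generator $-ih^{-1}\widetilde P_h(z)=-iP-h^{-1}Q+ih^{-1}z$ is, for fixed $h$, a bounded perturbation of the generator $-iP$ of $\varphi_{-t}^*$; hence $U(t)$ exists on $L^2$ and, writing $U_0(t):=e^{izt/h}\varphi_{-t}^*$ for the propagator of $hP-z$, satisfies
\begin{equation*}
U(t)=\sum_{k\geq0}(-h^{-1})^k\int_{t\geq s_1\geq\cdots\geq s_k\geq0}U_0(t-s_1)\,Q\,U_0(s_1-s_2)\,Q\cdots Q\,U_0(s_k)\,ds,
\end{equation*}
the series converging in $L^2\to L^2$ operator norm because $\|Q\|_{L^2\to L^2}\leq1$ (as $Q=f(-h^2\Delta_g)$ with $0\leq f\leq1$) and $\|U_0(s)\|_{L^2\to L^2}\leq Ce^{Cs}$ (using $\Im z\geq-C_0h$), so that the $k$-th summand is $\mathcal O(h^{-k}(Ce^{C})^{k+1}t^k/k!)$.

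I would then analyse the summands one at a time. The $k=0$ term is, up to the scalar $e^{izt/h}$, the Fourier integral operator $\varphi_{-t}^*$ quantising the canonical transformation $e^{tH_p}$, so its twisted semiclassical wave front set is $\{(e^{tH_p}(x,\xi),(x,\xi)):(x,\xi)\in T^*X\}$ at finite frequency together with its closure $\kappa(\{(e^{tH_p}(x,\xi),(x,\xi)):(x,\xi)\neq0\})$ at fibre infinity, i.e. exactly the two sets in the statement. For $k\geq1$ each summand is a composition of the FIOs $U_0(s_j-s_{j+1})$ with copies of $Q$; since $Q=f(-h^2\Delta_g)$, $f\in\CIc$, is smoothing and $\WFh'(Q)\subset\{((x,\xi),(x,\xi)):|\xi|_g^2\leq2\delta\}$ sits on the diagonal at \emph{finite} frequency and is empty at fibre infinity, the composition rule for wave front sets of products gives, whenever $a_0+\cdots+a_k=t$,
\begin{equation*}
\WFh'\big(U_0(a_0)\,Q\,U_0(a_1)\,Q\cdots Q\,U_0(a_k)\big)\subset\{(e^{tH_p}(x,\xi),(x,\xi)):(x,\xi)\in T^*X\},
\end{equation*}
and this set meets $S^*(X\times X)$ only for $k=0$: the finite-frequency intermediate point imposed by $\WFh'(Q)$ forces the endpoints to lie at finite frequency as well, by homogeneity of the flow.

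The main obstacle is passing from these term-by-term statements to the full series: one cannot simply take the union of infinitely many wave front sets, so the bounds must be made quantitative. My plan is the direct microlocal argument: given $(\rho_1,\rho_2)$ not in the claimed set, so $\rho_1\neq e^{tH_p}\rho_2$, pick $B_1,B_2\in\Psi_h^0(X)$ microlocally equal to $I$ near $\rho_1,\rho_2$ with $e^{tH_p}(\WFh(B_2))\cap\WFh(B_1)=\emptyset$, apply $B_1\cdot B_2$ to the Duhamel series, and use non-stationary phase / elliptic estimates for the FIOs $U_0$ to kill each summand to infinite order in $h$. The delicate point — and the step I expect to cost the most — is that the $\mathcal O_N(h^N)$ bound for the $k$-th summand must be shown to grow no faster than $C^{k+1}t^k/k!$ in $k$, so that summing over $k$ preserves $\mathcal O(h^\infty)$; this is why one should carry the factorial gain from the time simplices in parallel with the $h^{-k}$ loss throughout. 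Once $B_1U(t)B_2=\mathcal O(h^\infty)$ in every $C^\infty$ operator norm, $(\rho_1,\rho_2)\notin\WFh'(U(t))$, which gives both inclusions.

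A cleaner route, which I would likely adopt for the write-up, is to avoid the series and view $U(t)$ as the solution operator of $(hD_t-\widetilde P_h(z))U=0$, $U(0)=I$: its full symbol $\tau-p(x,\xi)+i\sigma_h(Q)(\xi)+z$ is of real principal type where $\sigma_h(Q)=0$ and has non-positive imaginary part everywhere because $Q\geq0$, so the semiclassical propagation of singularities for operators with an absorbing potential — as developed in \cite[\S3]{DZ} — shows that singularities are transported along the forward Hamilton flow of $\tau-p$, whose graph at time $t$ is $e^{tH_p}$, and are only damped, never created. Combined with $\WFh'(U(0))=\Delta(T^*X)$, this gives the lemma; the Duhamel computation above then serves as the concrete model explaining why the absorbing term cannot generate new singularities.
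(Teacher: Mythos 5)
The paper's proof differs from both of your routes: it conjugates, setting $V(t) := e^{itP}e^{-ith^{-1}\widetilde P_h(z)}$, so that, since $\WFh'(e^{-itP})$ is the graph of $e^{tH_p}$, the problem becomes showing $\WFh'(V(t)) \subset \Delta(T^*X)$. Differentiating and using Egorov, $hD_t\bigl(BV(t)\bigr) = ie^{itP}B(t)Qe^{-ith^{-1}(hP-iQ)}$ with $B(t) = e^{-itP}Be^{itP}$; if $\WFh(B)$ avoids $\bigcup_{|t'|\leq t}e^{t'H_p}(\WFh(Q))$ then $\WFh(B(t))\cap\WFh(Q)=\emptyset$, the right-hand side is $\mathcal{O}(h^\infty)_{\mathcal{D}'\to\CI}$, and integrating gives $BV(t) = B + \mathcal{O}(h^\infty)_{\mathcal{D}'\to\CI}$; taking adjoints gives $V(t)B = B + \mathcal{O}(h^\infty)$ as well. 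On the remaining piece near the flow-out of $\WFh(Q)$, the paper cites \cite[Prop.~A.3]{NZ} (this is the purpose of the technical assumption \eqref{eq:condfk}): $V(t)A \in \Psi_\alpha(X)$ with $\alpha<\tfrac12$ for $A\in\Psi^{\mathrm{comp}}$, and exotic pseudodifferential operators of this class still have twisted wave front set on the diagonal. The ODE trick produces a single factor of $Q$ per time derivative rather than a $Q^k$ series, and the $\Psi_\alpha$ characterization replaces any series near $\WFh(Q)$.

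Your Duhamel route has exactly the gap you flag but do not close: the expansion is not uniformly convergent as $h\to0$ (the $k$-th term is $\mathcal{O}(h^{-k}t^k/k!)$ in norm), so term-by-term wave front bounds do not pass to the sum unless the $\mathcal{O}_N(h^N)$ seminorms after microlocal cutoffs are shown to carry $k$-dependence no worse than $C_N^{k+1}t^k/k!$. This is not established, and non-stationary-phase constants can compound under $k$-fold composition, so it is more than bookkeeping. Your propagation-of-singularities route is a legitimate alternative (and would dispense with \cite{NZ}), but as written it contains a sign slip — the imaginary part of $\tau-p+i\sigma_h(Q)+z$ is $\sigma_h(Q)+\Im z\geq 0$, not $\leq 0$, so in the form $p_0-iq_0$ required by Prop.~\ref{ppg} one has $q_0\leq0$, the wrong sign — and that proposition is stated for the stationary problem, not directly for the evolution $hD_t-\widetilde P_h(z)$ acting on the Schwartz kernel with $K|_{t=0}=\delta(x-y)$. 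The lemma only needs the non-absorbing inclusion, so the sign is not fatal, but the propagation result must be restated and the reduction to the kernel must be supplied. In short, neither of your plans is complete as written, and both differ from the paper's conjugation argument.
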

\begin{proof}
We first note that the inclusion is obviously true for the
$ \WFh' ( e^{ - i t P } ) $ since the operator is the pull back by
$ \varphi_{-t}^* $. Hence the statement above will follow from showing
that $ V ( t ) := e^{ i t P} e^{ - i t h^{-1} \widetilde P_h ( z ) } $ is a pseudodifferential
operator. If $ B \in \Psi^0_h $ satisfies
\[ \WFh ( B ) \cap
\cup_{ 0 \leq |t'| \leq t }\,  e^{t'H_p}  (\WFh ( Q ) ) = \emptyset , \]
then
$ B e^{ i t P} e^{ - i t h^{-1} ( h P - i Q ) }  =  B + {\mathcal O} ( h^\infty )_{
\mathcal D' \to \CI } $.

In fact, we can use Egorov's theorem (a trivial case since $ e^{ it P } =
\varphi_t ^* $) to see that
\begin{equation}
\begin{split}
 h D_t  \left( B e^{ i t P} e^{ - i t h^{-1} ( h P - i Q ) } \right) & =
i B e^{ it P } Q  e^{ - i t h^{-1} ( h P - i Q ) } \\
& = i e^{ i t P } B ( t ) Q e^{ - i t h^{-1} ( h P - iQ ) } =
\mathcal O ( h^\infty )_{ \mathcal D' \to \CI } ,
\end{split}
\end{equation}
where $ B ( t ) := e^{-  i t P } B e^{ i t P  } $ satisfies
$ \WFh ( B ( t ) ) \cap \WFh ( Q ) = \emptyset $.
By switching the sign of $ P $ and taking adjoints we see
that the we also have
$$  e^{ i t P} e^{ - i t h^{-1} ( h P - i Q ) }  B =  B + {\mathcal O} ( h^\infty )_{
\mathcal D' \to \CI } .$$
Hence it is enough to prove that, for $ \alpha $ in \eqref{eq:condfk},
$ e^{ i tP } e^{ - i h^{-1} t ( P - i Q ) } A \in \Psi_\alpha ( X ) $,
$ \alpha < {\frac12} $, for $ A \in \Psi^{\rm{comp}}( X ) $. But that
is included in \cite[Proposition { A}.3]{NZ}.
\end{proof}

\medskip
\noindent
{\bf Remark.} {  
The assumption \eqref{eq:condfk} in the construction of $ \widetilde P_h ( z ) $ and used in the proof of Lemma \ref{l:prop}
is made for convenience only as we can then cite
\cite[Proposition { A}.3]{NZ}. }

\medskip

Inclusions \eqref{eq:WFRh} and \eqref{eq:WFhR} and
Lemma \ref{l:prop} show that
\begin{equation*}
\begin{split}
& \WFh'(e^{-it_0h^{-1}\widetilde{P}_h(z)}\widetilde{R}_h(z))
\cap T^\ast(X\times X)\subset\\
& \ \ \ \ \ \ \{((x,\xi),(y,\eta)) \; :\;
(e^{-t_0H_p}(x,\xi),(y,\eta))\in\Delta(T^\ast X)\cup\Omega_+\}
\end{split}
\end{equation*}
and
\begin{equation*}
\begin{split}
& \WFh'(e^{-it_0h^{-1}\widetilde{P}_h(z)}\widetilde{R}_h(z))
\cap S^\ast(X\times X)\subset\\
& \ \ \ \ \ \ \kappa\{((x,\xi),(y,\eta)) \; :\;
(e^{-t_0H_p}(x,\xi),(y,\eta))\in\Delta(T^\ast X)\cup\Omega_+\cup(E_u^\ast\times E_s^\ast)\setminus\{0\}\}.
\end{split}
\end{equation*}
In particular, for all $0 < h < 1 $,
\begin{equation*}
\begin{split}
& \WF'(e^{-it_0h^{-1}\widetilde{P}_h(z)}\widetilde{R}_h(z))\subset\\
& \ \ \ \ \ \ \{((x,\xi),(y,\eta)) \; :\;
(e^{-t_0H_p}(x,\xi),(y,\eta))\in\Delta(T^\ast X)\cup\Omega_+\cup(E_u^\ast\times E_s^\ast)\},
\end{split}
\end{equation*}
{  satisfying \eqref{ftc},} that is, does not intersect with $\Delta(T^\ast X)$ and hence $\tr^\flat(e^{-it_0h^{-1}\widetilde{P}_h(z)}\widetilde{R}_h(z))$ is well-defined.

Using a microlocal partition of unity,
$ I=\sum_{j=1}^J B_j + \mathcal O ( h^\infty )_{ \mathcal D'
\to \CI } $, $ B_j\in\Psi^0_h(X)$ {  (see for instance \cite[Proposition E.34]{res})}
we only need to prove that
\[
\begin{split}
& {\rm(i)}  \WFh(B) \subset \neigh_{ T^* X } ( x_0, \xi_0 ) ,  \
(x_0,\xi_0)\in T^\ast X  \Rightarrow
\tr^\flat  e^{-it_0h^{-1}\widetilde{P}_h(z)}\widetilde{R}_h(z) B
= \mathcal O ( h^{-{  2 } n-1} ) , \\
& {\rm(ii)}  \WFh(B) \subset \neigh_{ \overline T^* X } ( x_0, \xi_0 ) ,  \
(x_0,\xi_0)\in S^\ast X  \Rightarrow
\tr^\flat e^{-it_0h^{-1}\widetilde{P}_h(z)}\widetilde{R}_h(z) B = \mathcal O ( h^{\infty} ) ,
\end{split} \]
In case (ii), $ W :=\neigh_{ \overline T^* X } ( x_0, \xi_0 ) $ is the image of
the closure of a conic neighbourhood of $(x_0,\xi_0)$ in $T^\ast X$, under
the map $ T ^* X \to \overline T^* X $.
{  In fact, given (i) and (ii), we can use a microlocal partition of unity to write
$$ e^{-it_0h^{-1}\widetilde{P}_h(z)}\widetilde{R}_h(z)=\sum_{j=1}^J
 e^{-it_0h^{-1}\widetilde{P}_h(z)}\widetilde{R}_h(z) B_j+O(h^\infty)_{\mathcal{D}'\to \CI}$$
where each $B_j$ satisfies either (i) or (ii) and this proves 
\eqref{eq:flattr1}.}

For each case, we repeat the construction with the Fourier transform replaced by the semiclassical Fourier transform. Let
$u=K_h$ be the Schwartz kernel
of $e^{-it_0h^{-1}\widetilde{P}_h(z)}\widetilde{R}_h(z)B$.
Then, in the notation of \eqref{eq:extend},
\begin{equation}
\label{eq:iota}
\langle\iota^\ast u,\chi\rangle =\langle\iota^\ast(\varphi u),\chi\rangle
=(2\pi h)^{-2n}\int\mathcal{F}_h(\varphi u)(\xi,\eta)I_{\chi,h}(\xi,\eta)d\xi d\eta,
\end{equation}
where now
\begin{equation}
\label{eq:Ichih} I_{\chi,h}(\xi,\eta)=\int\chi(x)e^{i\langle\iota(x),(\xi,\eta)\rangle/h}dx
=\int\chi(x)e^{ix\cdot(\xi+\eta)/h}dx. 
\end{equation}

 If $\WFh(B)$ is contained in a small compact neighbourhood $W$ of $(x_0,\xi_0)$, we can assume in the partition of unity $1=\sum\chi_j$ (see the
argument following \eqref{eq:widehat}), $\pi(W)\subset X_0$ for some coordinate patch $X_0$ and $\pi(W)\cap\supp\chi_j=\emptyset$ except for the one in this coordinate patch, say $\psi=\psi_0$. For $j\neq0$,
since
\begin{equation*}
\WFh'(\varphi_j u)\subset\WFh' ( u ) \cap[(\overline{T}^\ast X)\times\WFh(B)]
\cap[(\overline{T}^\ast\supp\psi_j )\times(\overline{T}^\ast\supp\psi_j)]=\emptyset,
\end{equation*}
we have
\begin{equation*}
 \mathcal{F}_h(\varphi_j u)(\xi,\eta)=\mathcal{O}(h^\infty(1+|\xi|+|\eta|)^{-\infty}),
\end{equation*}
and thus
$ \langle\iota^\ast u,\chi_j \rangle=\mathcal{O}(h^\infty)$.
Therefore we only need to consider the coordinate patch $X_0$ centered at $x_0$ and the corresponding $\chi,\psi$ constructed as before.
We note that  $ I_{\chi, h } (\xi, \eta ) = \mathcal O (h^\infty(1+|\xi|+|\eta|)^{-\infty})$ uniformly for $(\xi,\eta)\in V$ ($ I_{\chi, h } $ is defined in \eqref{eq:Ichih}
and again we use the notation introduced before \eqref{eq:extend}).
Hence we only need to to estimate
\begin{equation*}
\left|\int_{{\complement V}}\mathcal{F}_h(\varphi u)(\xi,\eta)I_{\chi,h}(\xi,\eta)d\xi d\eta\right|\leq\int_{{\complement V}}|\mathcal{F}_h(\varphi u)(\xi,\eta)|d\xi d\eta.
\end{equation*}
Here
\begin{equation}
\begin{split}
\mathcal{F}_h(\varphi u)(\xi,\eta)
=&\int \psi(x)\psi(y)u(x,y)e^{-i(x\cdot\xi+y\cdot\eta)/h}dxdy\\
=&\langle e^{-it_0h^{-1}\widetilde{P}_h(z)}\widetilde{R}_h(z)B(\psi(y)e^{-iy\cdot\eta/h}), \psi(x)e^{- ix\cdot\xi/h}\rangle,
\end{split}
\end{equation}
where $ \langle \bullet, \bullet \rangle $ denotes distributional pairing.
We also note that  
\[ \WFh(\psi(x)e^{ix\cdot\xi/h})=\supp\psi\times\{\xi\}, \ \ 
\WFh(\psi(y)e^{-iy\cdot\eta/h})=\supp\psi\times\{-\eta\}, \ \ 
 ( \xi , \eta ) \in \complement V .\]

In case (i), we assume 
\[ \WF_h(B)\subset W=W_1 \times W_2 \text{ where $W_1=\pi(W)\subset X_0$
and  $W_2 \subset\mathbb{R}^n$ are compact.} \]
 We make the following observation:
if $ \widetilde W_2 = \{ \xi' : \exists \, \eta' \in W_2  \ ( \xi', \eta' )
\in \complement V \} $, then either $ - \eta \notin W_2 $ or $ - \xi \in \widetilde W_2 $.
(Here we used the symmetry \eqref{eq:CV}.)
Hence if $A\in\Psi_h^{{\rm{comp}}} (X)$, $ \WFh ( I - A ) \cap
\widetilde{W}_1\times\widetilde{W}_2 = \emptyset$,
where $\widetilde{W}_1$ is a small neighbourhood of $\supp\psi$, then
\begin{equation*}
\mathcal{F}_h(\varphi u)(\xi,\eta)=
\langle e^{-it_0h^{-1}\widetilde{P}_h(z)}\widetilde{R}_h(z)B(\psi(y)e^{-iy\cdot\eta/h}), A(\psi(x)e^{- ix\cdot\xi/h})\rangle+\mathcal{O}(h^\infty).
\end{equation*}
Therefore
\begin{equation*}
\begin{split}
|\mathcal{F}_h(\varphi u)(\xi,\eta)|
=&\;|\langle e^{-it_0h^{-1}\widetilde{P}_h(z)}\widetilde{R}_h(z)B(\psi(y)e^{-iy\cdot\eta/h}), A(\psi(x)e^{- ix\cdot\xi/h})\rangle|+\mathcal{O}(h^\infty)\\
\leq &\; C\|Ae^{-it_0h^{-1}\widetilde{P}_h(z)}\widetilde{R}_h(z)B\|_{L^2\to L^2}+\mathcal{O}(h^\infty)\leq Ch^{-1}\\
\end{split}
\end{equation*}
where we use the estimate \eqref{modifiedresolvent} and the fact that microlocally on $\WFh(A)\times\WFh(B)$ which is a compact set in $T^\ast(X\times X)$, $H_{sG(h)}$ is equivalent to $L^2$ uniformly.
Combined with \eqref{eq:iota} this finishes the proof for case (i).

In case (ii), we again assume that 
$ \WFh(B)\subset W=W_1\times W_2$ where $W_1=\pi(W)\subset X_0$ is a small compact neighbourhood of $x_0$ but now $W_2\subset\bar{\mathbb{R}}^n=\mathbb{R}^n\cup\partial \, \bar {\mathbb{R}}^n$ is a small conic neighbourhood of $\xi_0\in
\partial \,\bar{ \mathbb{R}}^n$ intersecting with $\{|\xi|\geqslant C\}$. As in case (i),
we put
$ \widetilde W =\widetilde{W}_1\times\widetilde{W}_2$ such that $\widetilde{W}_1$ is a small neighbourhood of $\supp\psi$ and $\widetilde{W}_2$ is a small neighbourhood of ${\complement V}(W_2)$, which is again a small conic neighbourhood of $\xi_0$.

We then  choose $A\in\Psi_h^0(X)$ such that $ \WFh ( I - A) \cap \widetilde{W}
= \emptyset $,  and  $\WFh(A)$ is contained in a small neighbourhood of $\widetilde{W}$.
We have
\[ (\xi,\eta)\in {\complement V} \ \Longrightarrow  \ \text{ (a)  $-\eta\notin W_2$
or (b) $- \xi\in\widetilde{W}_2$.}
\]
In the case (a)  we have
\begin{equation*}
|\mathcal{F}_h(\varphi u)(\xi,\eta)|=\mathcal{O}(h^\infty(1+|\xi|+|\eta|)^{-\infty}).
\end{equation*}
In the case
 (b) we  need a uniform estimate for $\langle\xi\rangle^N|\mathcal{F}_h(\varphi u)(\xi,\eta)|$ where $N $ is large.
To do this, we use the notation from the proof of Lemma \ref{l:prop} and write
\begin{equation*}
\begin{split}
\langle\xi\rangle^N\mathcal{F}_h(\varphi u)(\xi,\eta)
=&\;\langle e^{-it_0h^{-1}\widetilde{P}_h(z)}\widetilde{R}_h(z)B(\psi(y)e^{-iy\cdot\eta/h}),\langle\xi\rangle^N\psi(x)e^{-ix\cdot\xi/h}\rangle\\
=&\;\langle \varphi_{-t_0}^\ast V (t_0) \widetilde{R}_h(z)B(\psi(y)e^{-iy\cdot\eta/h}),A(\langle\xi\rangle^N\psi(x)e^{-ix\cdot\xi/h})\rangle+\mathcal{O}(h^\infty)\\
=&\;\langle V(t_0)\widetilde{R}_h(z)B(\psi(y)e^{-iy\cdot\eta/h}),\varphi_{t_0}^\ast A(\langle\xi\rangle^N\psi(x)e^{-ix\cdot\xi/h})\rangle+\mathcal{O}(h^\infty).
\end{split}
\end{equation*}
We notice that $\WFh(\langle\xi\rangle^N\psi(x)e^{-ix\cdot\xi/h})=\supp\psi\times\{-\xi\}$, and
\begin{equation*}
\|\langle\xi\rangle^N\psi(x)e^{-ix\cdot\xi/h}\|_{H_h^{-N}}=\mathcal{O}(1)
\end{equation*}
uniformly in $\xi$. Since $ t_0 $ is small we can choose $W$ and $\widetilde{W}$ small
enough, so that $e^{-t_0 H_p}\widetilde{W}\cap\widetilde{W}=\emptyset$.
Then we choose a microlocal partition of unity,  $A_1^2+A_2^2=I +
\mathcal O ( h^\infty )_{ \mathcal D' \to \CI} $,
such that $e^{-tH_p}\widetilde{W}\subset\elh(A_1)$,
$\WFh(A_1)$ is a small neighbourhood of $e^{-tH_p} \widetilde W $ and
 $\WFh(A_2)\cap e^{-t_0H_p}(\WFh(A))=\emptyset$. We have
\begin{equation}
\label{eqa}
\begin{split}
& \langle\xi\rangle^N\mathcal{F}_h(\varphi u)(\xi,\eta)
= \langle A_1 V(t_0) \widetilde{R}_h(z)B(\psi(y)e^{-iy\cdot\eta/h}),A_1\varphi_{t_0}^\ast A(\langle\xi\rangle^N\psi(x)e^{-ix\cdot\xi/h})\rangle\\
&  \ \ \ \ \ \ \ \ \ \ + \langle A_2 V(t_0) \widetilde{R}_h(z)B(\psi(y)e^{-iy\cdot\eta/h}),A_2\varphi_{t_0}^\ast A(\langle\xi\rangle^N\psi(x)e^{- ix\cdot\xi/h})\rangle+
\mathcal \mathcal{O}(h^\infty).
\end{split}
\end{equation}

We recall the following propagation estimate \cite[Propositon 2.5]{DZ} which
is essentially the clasical result of Duistermaat--H\"ormander:
\begin{prop}
\label{ppg}
Assume that $P_0\in\Psi_h^1(X)$ with semiclassical principal symbol $p-iq\in S^1_h(X)/hS_h^0(X)$ where $p\in S^1(X;\mathbb{R})$ is independent of $h$ and $q\geqslant0$ everywhere. Assume also that $p$ is homogeneous of degree 1 in $\xi$ for $|\xi|$ large enough. Let $e^{tH_p}$ be the Hamiltonian flow of $p$ on $\overline{T}^\ast X$ and $u(h)\in\mathcal{D}'(X)$, then if $A_0,B_0,B_1\in\Psi_h^0(X)$ and for each $(x,\xi)\in\WFh(A_0)$, there exists $T\geqslant0$ with $e^{-TH_p}(x,\xi)\in\elh(B_0)$ and $e^{tH_p}(x,\xi)\in\elh(B_1)$ for $t\in[-T,0]$. Then for each $m$,
\begin{equation}
\|A_0u\|_{H^m_h(X)}\leq C\|B_0u\|_{H^m_h(X)}+Ch^{-1}\|B_1P_0u\|_{H_h^m(X)}+\mathcal{O}(h^\infty).
\end{equation}
\end{prop}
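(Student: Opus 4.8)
The plan is to prove Proposition \ref{ppg} by the classical positive-commutator argument for propagation of singularities (Duistermaat--H\"ormander), carried out semiclassically and with attention to the absorbing term $-\mathrm{Op}_h(q)$, $q\ge0$, as in the hypothesis.

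I would begin with reductions. Conjugating $P_0$ by $\mathrm{Op}_h(\langle\xi\rangle^m)\in\Psi_h^m$ reduces the $H^m_h$-estimate to the case $m=0$: since $p$ is homogeneous of degree $1$ for $|\xi|$ large, this conjugation changes $P_0$ only by terms which preserve the real principal symbol $p$ and the sign condition $\Im\sigma_h(P_0)=-q\le0$. Next, $\WFh(A_0)$ is compact and the hypothesis holds at each of its points, so a microlocal partition of unity lets me assume $\WFh(A_0)$ is contained in a small neighbourhood of a single $\rho_0\in\overline T^*X$ with escape time $T=T(\rho_0)$, which I take minimal; then the arc $\gamma=\{e^{tH_p}\rho_0:-T\le t\le0\}$ is compact, contained in $\elh(B_1)$, with $e^{-TH_p}\rho_0\in\elh(B_0)$. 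Finally, estimates of this type compose, so it suffices to prove the short-time version: for $\tau>0$ small and $A_\rho,B_\rho\in\Psi_h^0$ microlocalised near $\rho$ and near $e^{-\tau H_p}\rho$ respectively, with $B_1$ elliptic on the short arc joining them,
\[ \|A_\rho u\|_{L^2}\le C\|B_\rho u\|_{L^2}+Ch^{-1}\|B_1P_0u\|_{L^2}+\mathcal{O}(h^\infty)\|u\|_{H^{-N}_h}; \]
chaining $\lceil T/\tau\rceil$ of these along $\gamma$, using ellipticity of $B_1$ on all of $\gamma$, gives the full estimate, and over a time as short as $\tau$ the arc is embedded, so I have a flow box.

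For the short-time estimate I would build an escape symbol: a real $g\in S^0$, $g\ge0$, supported in a thin tube $U$ around the short arc with $\overline U\subset\elh(B_1)$, elliptic near $\rho$, and satisfying everywhere the classical escape-function inequality
\[ H_p(g^2)\le -\delta g^2-a_\rho^2+Cb_\rho^2 , \qquad \delta>0, \]
with $a_\rho=\sigma_h(A_\rho)$, $b_\rho=\sigma_h(B_\rho)$; in flow-box coordinates $(t,y)$ with $H_p=\partial_t$ one can take $g\sim\chi(y)^{1/2}\psi(t)^{1/2}e^{-\delta t/2}$, $\psi$ rising from $0$ to $1$ across $[-\tau-\epsilon,-\tau]$, with $+Cb_\rho^2$ absorbing the contribution of $\psi'$ near the source. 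Writing $Q=\mathrm{Op}_h(q)+Ch\ge0$ (self-adjoint, nonnegative) so that $P_0=\mathrm{Op}_h(p)-iQ+\mathcal{O}(h)_{\Psi_h^0}$, the identity
\[ 2\,\Im\langle G^*Gu,P_0u\rangle=-h\langle\mathrm{Op}_h(H_p(g^2))u,u\rangle+2\langle QGu,Gu\rangle+\mathcal{O}(h)\langle\mathrm{Op}_h(\ell)u,u\rangle \]
holds with $G=\mathrm{Op}_h(g)$ and $\ell\ge0$ supported in $U$; here $2\langle QGu,Gu\rangle\ge0$, so the absorption only helps. Inserting the escape inequality (and the sharp G{\aa}rding inequality for the various sign-definite symbols) bounds the right side below by $h\big(\delta\|Gu\|^2+\|A_\rho u\|^2-C\|B_\rho u\|^2\big)-\mathcal{O}(h)\|Gu\|^2-\mathcal{O}(h^\infty)\|u\|^2_{H^{-N}_h}$, using that $\ell$ is supported where $g$ is elliptic; the left side is bounded above by $2\|Gu\|\big(C\|B_1P_0u\|+\mathcal{O}(h^\infty)\|u\|_{H^{-N}_h}\big)$ since $\supp g\subset\elh(B_1)$, hence by $\tfrac{h\delta}{2}\|Gu\|^2+Ch^{-1}\|B_1P_0u\|^2+\mathcal{O}(h^\infty)\|u\|^2_{H^{-N}_h}$. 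For $h$ small this absorbs the $\|Gu\|^2$ terms and yields the short-time estimate; a standard regularisation (inserting $\langle\epsilon hD\rangle^{-N}$ and letting $\epsilon\to0$) justifies writing the $L^2$ norms for an a priori $h$-tempered distribution $u$.

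The main obstacle is the escape-function construction: producing one $g$ that is simultaneously decreasing under $H_p$ at a fixed positive rate in the bulk of the tube, has its source contribution dominated by $b_\rho^2$, keeps all the lower-order commutator errors microsupported where $g$ is already elliptic, and behaves correctly at fiber infinity --- where $H_p$ is the flow on $S^*X$ dressed by the degree-$1$ conformal factor, which is why the homogeneity hypothesis on $p$ is needed. This is the classical technical core of propagation of singularities; by comparison, the reductions, the commutator identity, the role of $q\ge0$, and the regularisation are routine.
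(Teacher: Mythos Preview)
The paper does not give its own proof of this proposition: it is simply quoted as \cite[Proposition 2.5]{DZ} with the remark that it ``is essentially the classical result of Duistermaat--H\"ormander.'' So there is nothing to compare against in the paper itself.

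Your sketch is the standard positive-commutator/escape-function proof, and it is the right approach; this is exactly what one finds in the cited reference and in the textbook literature (e.g.\ \cite[Theorem E.47]{res} or \cite[\S 7]{ds}). The reductions (conjugation to $m=0$, partition of unity, chaining short-time steps), the commutator identity, the observation that $q\ge0$ makes the $2\langle QGu,Gu\rangle$ term have a helpful sign, and the regularisation are all correct. One small point: in your commutator identity the error term should be $\mathcal{O}(h^2)$ in $\Psi_h^{-1}$ rather than the $\mathcal{O}(h)$ you wrote, since $[G^*G,\mathrm{Op}_h(p)]=\frac{h}{i}\mathrm{Op}_h(H_p g^2)+\mathcal{O}(h^2)_{\Psi_h^{-1}}$ and likewise for the $Q$ commutator; you need this extra factor of $h$ so that the lower-order error $\mathcal{O}(h^2)\langle\mathrm{Op}_h(\ell)u,u\rangle$ can be absorbed into $h\delta\|Gu\|^2$ for $h$ small. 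With that correction the argument goes through.
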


We apply the proposition to $u=\widetilde{R}_h(z)B(\psi(y)e^{-iy\cdot\eta/h})$, $P_0=\widetilde{P}_h(z)$, $A_0=A_1V(t_0)$ with $\elh(B_0)$ containing $e^{-TH_p}(\WFh(A_0))$ for $T>0$ small enough and $e^{tH_p}(x,\xi)\in\elh(B_1)$ for $t\in [-T,0]$. Furthermore, we can choose $B_1$ so that $\WFh(B_1)\cap\WFh(B)=\emptyset$.
\[ \begin{split} \|A_0u\|_{H_h^N} & \leq C\|B_0u\|_{H_h^N}+Ch^{-1}\|B_1B(\psi(y)e^{-iy\cdot\eta})\|_{H_h^N}+\mathcal{O}(h^\infty)\\
& =C\|B_0u\|_{H_h^N}+\mathcal{O}(h^\infty). \end{split} \]
However, the semiclassical wavefront set condition of $\widetilde{R}_h(z)$ shows that $\WFh(B_0)\cap\WFh(u)=\emptyset$, thus
$\|A_0u\|_{H_h^N}=\mathcal{O}(h^\infty)$
and
we know the term corresponding to $A_1$ in the sum of \eqref{eqa} is $\mathcal{O}(h^\infty)$. For the other term involving $ A_2 $, we use
$$\|A_2\varphi_{t_0} ^\ast A(\langle\xi\rangle^N\psi(x)e^{ix\cdot\xi/h})\|_{H^{P}_h }
\leq \mathcal{O}(h^\infty)\|\varphi_{t_0} ^\ast A(\langle\xi\rangle^N\psi(x)e^{ix\cdot\xi/h})\|_{H_h^{-N}}=\mathcal{O}(h^\infty),$$
for any $ P $.
This is paired with the term estimated by
$$\|A_2 V ( t_0 ) \widetilde{R}_h(z)B(\psi(y)e^{-iy\cdot\eta/h})\|_{H_h^{-P }}\leq
C\|\psi(y)e^{-iy\cdot\eta/h}\|_{H_h^P } \leq C \langle \eta \rangle^P , $$
for some $ P$. Hence
\[
\langle A_2 V(t_0) \widetilde{R}_h(z)B(\psi(y)e^{-iy\cdot\eta/h}),A_2\varphi_{t_0}^\ast A(\langle\xi\rangle^N\psi(x)e^{- ix\cdot\xi/h}) \rangle = \mathcal O ( h^\infty \langle
\eta \rangle^P ). \]
Returning to \eqref{eqa} we see that
\[ \langle \xi \rangle^N \mathcal F_h ( \varphi u ) (\xi, \eta ) =
\mathcal O ( h^\infty \langle
\eta \rangle^P ). \]
Since $ |\xi | $ is comparable $ |\eta | $ in $ \complement V $, we have
$$ \mathcal F_h ( \varphi u ) (\xi, \eta ) = \mathcal O ( h^\infty
\langle ( \xi , \eta ) \rangle^{ -N + P } ) $$ 
and that concludes the
proof of \eqref{eq:flattr1}.

\section{Proof of the trace formula}
\subsection{Sketch of the proof}
We first indicate basic ideas of the proof before we go into the details -- the
principle is quite simple but the implementation involves the use of
the results of \cite{DZ} and of some ideas from \cite{S}.

In general, a trace formula such as \eqref{localtrace} follows from the finite order of the analytic continuation of $\zeta_1(\lambda)$ in the strip $\Im\lambda\geqslant-A$, that is,
from having the following estimate valid away from small neighbourhoods of resonances:
\begin{equation}
\label{finiteorder}
\left|\frac{d}{d\lambda} \log\zeta_1(\lambda)\right|=\mathcal{O}(\langle\lambda\rangle^{2n+1}).
\end{equation}
To obtain the distributional identity \eqref{localtrace}
we take $\psi\in C_0^\infty(0,\infty)$ and compute the following integral in two different ways
\begin{equation*}
\int_{\mathbb{R}}\widehat{\psi}(\lambda)\frac{d}{d\lambda}  \log\zeta_1(\lambda)d\lambda.
\end{equation*}
On one hand, we pass the integral contour to $\mathbb{R}+iB$, where $B>C_1$ so that \eqref{zeta} converges. Since there are no resonances in the upper half plane, we have
\begin{equation*}
\begin{split}
\int_{\mathbb{R}+iB}\widehat{\psi}(\lambda)\left(\frac{1}{i}\int_0^\infty e^{it\lambda}\tr^\flat e^{-itP}dt\right)d\lambda
=&\frac{1}{i}\int_0^\infty\left(\int_{\mathbb{R}+iB}\widehat{\psi}(\lambda)e^{it\lambda}d\lambda\right)\tr^{\flat}e^{-itP}dt\\
=&\int_0^\infty\psi(t)\tr^{\flat}e^{-itP}dt.
\end{split}
\end{equation*}
Guillemin's trace formula \eqref{Guillemin} gives
\begin{equation}
\int_{\mathbb{R}}\widehat{\psi}(\lambda)\frac{d}{d\lambda}\log\zeta_1(\lambda)d\lambda=\left\langle \sum_\gamma\frac{T_\gamma^\#\delta(t-T_\gamma)}{|\det(I-\mathcal{P}_\gamma)|}, \psi \right\rangle.
\end{equation}
On the other hand, we pass the integral contour to $\mathbb{R}-iA$ and we get the contribution from the poles of $\frac{d}{d\lambda}\log\zeta_1(\lambda)$ which are exactly the Pollicott-Ruelle resonances,
\begin{equation}
\sum_{\mu\in\Res(P),\Im\mu>-A}\widehat{\psi}(\mu)=\left\langle
\sum_{\mu\in\Res(P),\Im\mu>-A}e^{-i\mu t}, \psi \right\rangle.
\end{equation}
The remainder is exactly
\begin{equation}
\langle F_A, \psi\rangle: =\int_{\mathbb{R}-iA}\widehat{\psi}(\lambda)\frac{d}{d\lambda}\log\zeta_1(\lambda)d\lambda,
\end{equation}
and we want to show that $ F_A $ can be extended to a tempered distribution
supported on $ [ 0, \infty ) $ and that it satisfies \eqref{error}. The
estimate \eqref{finiteorder} is crucial here.

To see \eqref{finiteorder}, we decompose
\begin{equation}
\label{decomposition}
\begin{split}
e^{-it_0(P-\lambda)}(P-\lambda)^{-1} & =
e^{-it_0(P-i \widetilde Q-\lambda)}(P-i \widetilde Q-\lambda)^{-1}
+[(P-\lambda)^{-1}-(P-i \widetilde Q-\lambda)^{-1}]\\
& \ \ \ \ - i\int_0^{t_0}[e^{-it(P-\lambda)}-e^{-it(P-i \widetilde Q-\lambda)}]dt,
\end{split}
\end{equation}
where $ \widetilde Q = h^{-1} Q $ for a suitably chosen $ h$ depending on the range of $ \lambda $'s. This is valid from $ \Im \lambda \gg 0 $ and then continues analytically to $ \mathbb C $ on the level of distributional Schwartz kernels.

The first term is holomorphic in $\lambda$ and can be estimated by Proposition \ref{flattracees} in the semiclassical setting.

The second term on the right hand side of \eqref{decomposition} is of trace class if $\lambda$ is not a resonance. To see this, we use the following formula
\begin{equation}
\label{res1}
\begin{split}
(P-\lambda)^{-1}-(P-i\widetilde Q-\lambda)^{-1}  & =
[(P-\lambda)^{-1}(P-i \widetilde Q-\lambda)-I](P-i\widetilde Q-\lambda)^{-1}\\
& =-(P-\lambda)^{-1}i \widetilde Q(P-i \widetilde Q-\lambda)^{-1}\\
\end{split}
\end{equation}
to get
\begin{equation}
\label{res2}
(P-\lambda)^{-1}=(P-i \widetilde Q-\lambda)^{-1}[I+i\widetilde Q(P-i\widetilde Q-\lambda)^{-1}]^{-1}.
\end{equation}
By using \eqref{res2} in  \eqref{res1} we obtain
\begin{equation}
(P-\lambda)^{-1}-(P-i\widetilde Q-\lambda)^{-1}=
-(P-i\widetilde Q-\lambda)^{-1}[I+i \widetilde Q(P-i\widetilde Q-\lambda)^{-1}]^{-1}
i\widetilde Q(P-i \widetilde
Q-\lambda)^{-1}.
\end{equation}
If we denote { $F(\lambda)=I+i\widetilde{Q}(P-i\widetilde Q-\lambda)^{-1}$}, then
\begin{equation*}
F'(\lambda)=\frac{d}{d\lambda}F(\lambda)=i\widetilde Q(P-i\widetilde Q-\lambda)^{-2}.
\end{equation*}
Moreover, $F(\lambda) - I $ and $ F' ( \lambda ) $ are operators
of finite rank. By the cyclicity of the trace, we have
\begin{equation*}
\begin{split}
\tr[(P-\lambda)^{-1}-(P-i\widetilde Q-\lambda)^{-1}]=&
-\tr[I+i\widetilde Q(P-i\widetilde Q-\lambda)^{-1}]^{-1}i\widetilde Q(P-i\widetilde Q-\lambda)^{-2}\\
=&-\tr F'(\lambda)F(\lambda)^{-1}=-\frac{d}{d\lambda}\log\det F(\lambda).
\end{split}
\end{equation*}
Therefore it can be controlled by the rank of $\widetilde Q$ and the norm of $F(\lambda)$.

The third term in \eqref{decomposition} can be handled by Duhamel's principle: if $u(t): =e^{-it(P-i\widetilde Q-\lambda)}f$, then
\begin{equation*}
\partial_t u(t)=-i(P-i\widetilde Q-\lambda)u(t), \ \ \ u(0)=f.
\end{equation*}
Rewriting the equation as
$\partial_t u(t)+i(P-\lambda)u(t)=-\widetilde Qu(t)$ ,
we get
\begin{equation*}
u(t)=e^{-it(P-\lambda)}f-\int_0^t e^{-i(t-s)(P-\lambda)}\widetilde Qu(s)ds.
\end{equation*}
Therefore
\begin{equation*}
e^{-it(P-\lambda)}-e^{-it(P-i\widetilde Q-\lambda)}=\int_0^te^{-i(t-s)(P-\lambda)}\widetilde Qe^{-is(P-i\widetilde Q-\lambda)}ds.
\end{equation*}
This shows that the left hand side is also of trace class and its trace class norm is controlled by the trace class norm of $\widetilde Q$.

{  To carry out the strategy above} we need to choose correct contours and to obtain
a local version of \eqref{finiteorder} using $ \det F ( \lambda ) $.
For that we break the infinite contour into a family of finite contours and use the semiclassical reduction to treat the zeta function on each contour separately.
That involves choices of $ h $ so that $ z = h \lambda $ is in an
appropriate range.

\subsection{The contours for integration}
In this section, we choose contours for integration. First, we decompose the region $\Omega=\{\lambda\in\mathbb{C}:-A\leq \Im\lambda\leq B\}$ into dyadic pieces: fix $ E > 0 $ and put $\Omega=\bigcup_{k\in\mathbb{Z}}\Omega_k$, where $\Omega_0=\Omega\cap\{-E\leq\Re\lambda\leq E\}$ and
\begin{gather*}
\Omega_k:=\Omega\cap\{2^{k-1}E\leq\Re\lambda\leq 2^kE\}, \ \ k>0\\
\Omega_{-k}:=\Omega\cap\{-2^kE\leq\Re\lambda\leq - 2^{k-1}E\}, \ \ k>0.
\end{gather*}
For each $k$, we write $\gamma_k=\partial\Omega_k=\bigcup_{j=1}^4\gamma^j_k$ with counterclockwise orientation.

\begin{figure}[ht]
\includegraphics[width=6.5in]{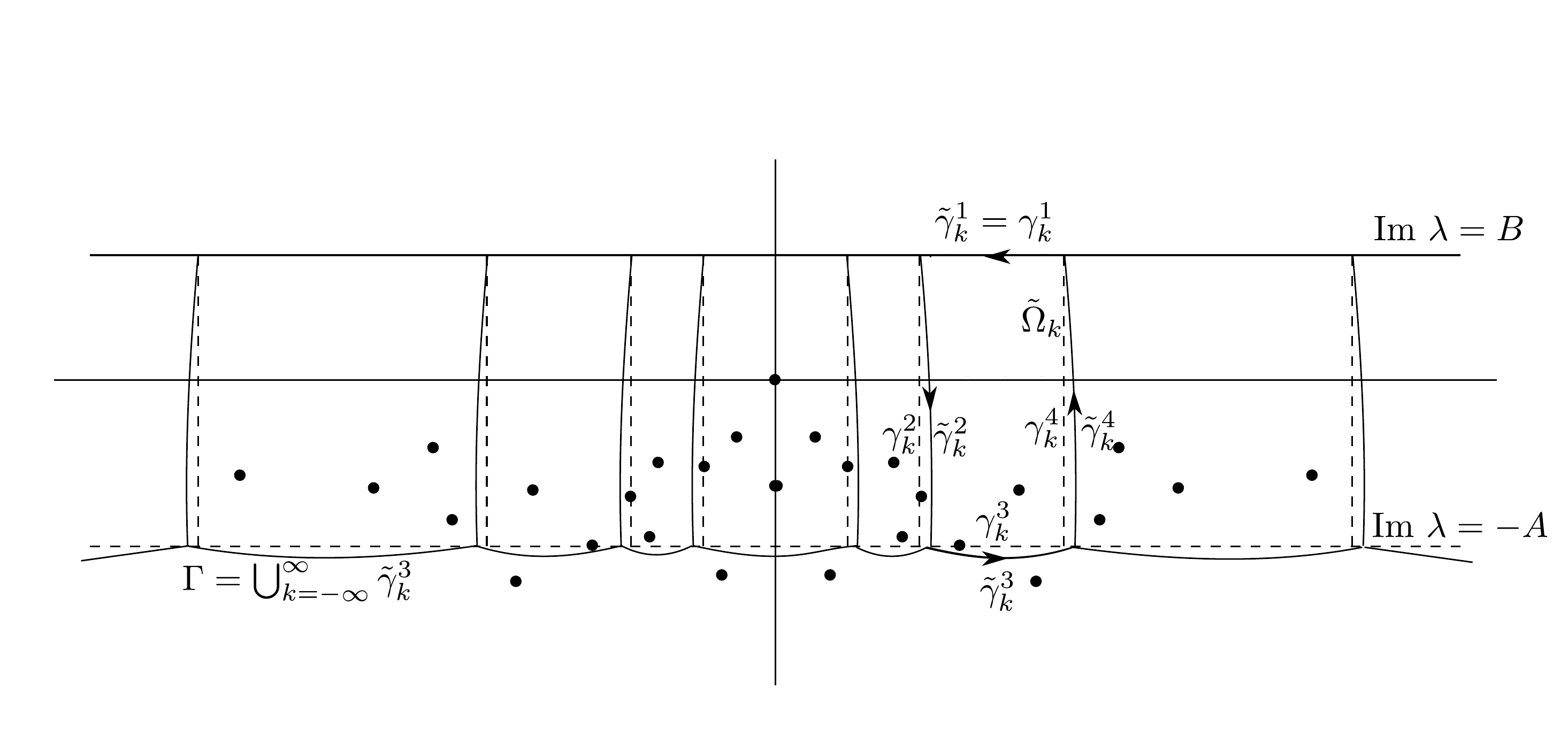}
\caption{Integration contours}
\end{figure}

Next, we shall modify $\gamma^2_k,\gamma^3_k$ and $\gamma^4_k$ to avoid the resonances. For simplicity, we only work for $k>0$ as the case for $k<0$ can be handled by symmetry. We choose $\widetilde{\gamma}^2_k,\widetilde{\gamma}^3_k$ and $\widetilde{\gamma}^4_k$ lying in
\begin{equation*}
([2^{k-1}E-1,2^kE+1]+i[-A-1,B])\setminus([2^{k-1}E+1,
2^kE-1]+i[-A,B])
\end{equation*}
so that $\widetilde{\gamma}^2_k\subset[2^{k-1}E-1,2^{k-1}E+1]
+i[-A,B]$ connects $2^{k-1}E+iB$ with a point $w_k$ which lies
on $[2^{k-1}E-1,2^{k-1}E+1]-iA$, $\widetilde{\gamma}^4_k=-\widetilde{\gamma}^2_{k+1}$; $\widetilde{\gamma}^3_k\subset[2^{k-1}E-1,2^kE+1]+i[-A-1,-A]$ connects $w_k$ with $w_{k+1}$. The region bounded by  $\widetilde{\gamma}_k :=\bigcup_{j=1}^4\widetilde{\gamma}^j_k$
 is denoted as $\widetilde{\Omega}_k$, (we write $\widetilde{\gamma}^1_k=\gamma^1_k$). Then we have { 
\begin{equation*}
\Omega\subset\widetilde{\Omega}=\bigcup_{k\in\mathbb{Z}}\widetilde{\Omega}_k
\subset\{\lambda\in\mathbb{C},-A-1\leq\Im\lambda\leq B\}
\end{equation*}}
and all $\widetilde{\Omega}_k$ have disjoint interiors.

For convenience, we turn into the semiclassical setting.
Let $W_h=h\widetilde{\Omega}_k$ where $h^{-1/2}=2^kE$, then
\begin{equation*}
\textstyle [\frac{1}{2}h^{1/2}+h,h^{1/2}-h]+i[-Ah,Bh]\subset W_h\subset
[\frac{1}{2}h^{1/2}-h,h^{1/2}+h]+i[(-A-1)h,Bh].
\end{equation*}
Moreover, $\rho_h:=\partial W_h=\bigcup_{j=1}^4\rho_h^j$ where $\rho_h^1$ is the horizontal segment $[\frac{1}{2}h^{1/2},h^{1/2}]+iBh$ with negative orientation; $\rho_h^2\subset[\frac{1}{2}h^{1/2}-h,\frac{1}{2}h^{1/2}+h]+i[-Ah,Bh]$ connects $\frac{1}{2}h^{1/2}+iBh$ with a point $z_h\in[\frac{1}{2}h^{1/2}-h,\frac{1}{2}h^{1/2}+h]-iAh$; $\rho_h^4\subset[h^{1/2}-h,h^{1/2}+h]+i[-Ah,Bh]$ connects a point $z'_h\in[h^{1/2}-h,h^{1/2}+h]-iAh$ with $h^{1/2}+iBh$;
and $\rho_h^3\subset[\frac{1}{2}h^{1/2}-h,h^{1/2}+h]$ connects $z_h$ with $z'_h$.

We have the following contour integration
\begin{equation}
\label{contour}
\oint_{\rho_h}\widehat{\psi}_h(z)\frac{d}{dz}\log\zeta_h(z)dz=
\sum_{z_j\in \Res_h(P)\cap W_h}\psi_h(z_j).
\end{equation}
Here we write $\widehat{\psi}_h(z)=\widehat{\psi}(z/h)$, $\zeta_h(z)=\zeta_1(z/h)$, $\Res_h(P)=h\Res(P)$.

We rewrite the decomposition \eqref{decomposition} in this scaling:
\begin{equation}
\label{hdecom}
\begin{split}
\frac{d}{dz}\log\zeta_h(z) & = h \tr^\flat(e^{-it_0h^{-1}P_h(z)}R_h(z))\\
& = \tr^\flat(e^{-it_0h^{-1}\widetilde{P}_h(z)}\widetilde{R}_h(z))
+ \tr(R_h(z)-\widetilde{R}_h(z))\\
& -\frac{i}{h}\tr\int_0^{t_0}[e^{-ith^{-1}P_h(z)}-e^{-ith^{-1}\widetilde{P}_h(z)}]dt.
\end{split}
\end{equation}
Then as in the discussion after \eqref{decomposition}, in the region $-C_0h\leq \Im z\leq 1, |\Re z|\leq 2h^{1/2}$, we can apply Proposition
\ref{flattracees} to obtain
\begin{equation}
\label{hes1}
\left|\tr^\flat(e^{-it_0h^{-1}\widetilde{P}_h(z)}\widetilde{R}_h(z))\right|=\mathcal{O}(h^{-2n-1}).
\end{equation}
Also we have
\begin{equation}
\label{hes2}
\left \|\int_0^{t_0}[e^{-ith^{-1}P_h(z)}-e^{-ith^{-1}\widetilde{P}_h(z)}]dt \right\|_{\rm{tr}} =\mathcal{O}(h^{-n-1}).
\end{equation}
For the second term, we have
\begin{equation*}
\tr(R_h(z)-\widetilde{R}_h(z))=-\frac{d}{dz}\log\det F(z),
\end{equation*}
where
$F(z)=I+iQ\widetilde{R}_h(z)$ is a Fredholm operator and the poles for $F(z)^{-1}$ coincides with the resonances. Moreover, by \eqref{finiterank}, \eqref{modifiedresolvent} and Weyl's inequality, we have
\begin{equation}
\label{detes1}
|\det F(z)|\leq (Ch^{-1})^{Ch^{-n}}\leq Ce^{Ch^{-n-1}}.
\end{equation}

Moreover, when $\Im z\geqslant C_1h$, we have $F(z)=I+iQ\widetilde{R}_h(z)=P_h(z)\widetilde{R}_h(z)$, so $F(z)$ is invertible and $F(z)^{-1}=\widetilde{P}_h(z)R_h(z)$. Therefore
\begin{equation*}
\begin{split}
\|F(z)^{-1}\|_{H_{sG(h)}\to H_{sG(h)}}
\leq\|\widetilde{P}_h(z)\|_{D_{sG(h)}\to H_{sG(h)}}&\|R_h(z)\|_{H_{sG(h)}\to D_{sG(h)}}\\
\leq\|\widetilde{P}_h(z)\|_{D_{sG(h)}\to H_{sG(h)}}&(\|R_h(z)\|_{H_{sG(h)}\to H_{sG(h)}}\\
&+\|hPR_h(z)\|_{H_{sG(h)}\to H_{sG(h)}})
\leq Ch^{-1}.
\end{split}
\end{equation*}
We can also write $F(z)^{-1}=I-iQR_h(z)$ which gives the estimate
\begin{equation}
\label{detes2}
|\det F(z)^{-1}|\leq (Ch^{-1})^{Ch^{-n}}\leq Ce^{Ch^{-n-1}}.
\end{equation}

We recall a lower modulus theorem due to H. Cartan { (see \cite[\S 11.3, Theorem 4]{Le})} : Suppose that $g$ is holomorphic in $D(z_0,2eR)$ and $g(z_0)=1$. Then for any $\eta>0$,
\begin{equation}
\label{lowermod}
\log|g(z)|\geqslant-\log(15e^3/\eta)\log\max_{|z-z_0|<2eR}|g(z)|,\;\;
z\in D(z_0,R)\setminus\mathcal{D},
\end{equation}
where $\mathcal{D}$ is a union of discs with the sum of radii less than $\eta R$. With the help of this lower modulus theorem, we can make a suitable choice of integration contour.

\begin{lem}
We can choose $\widetilde{\gamma}_k$ suitably such that in addition to the assumptions above, we have
\begin{equation}
\label{detbound}
|\log\det F(z)|=\mathcal{O}(h^{-n-1})
\end{equation}
when $z\in\rho_h$.
\end{lem}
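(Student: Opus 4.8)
The plan is to apply H.~Cartan's lower modulus theorem \eqref{lowermod} to the entire function $\det F$ on each box $W_h=h\widetilde\Omega_k$, $h^{-1/2}=2^kE$, and then to choose $\widetilde\gamma_k$ so that the contour misses the resulting exceptional set. First I would record the analytic facts about $z\mapsto\det F(z)$, $F(z)=I+iQ\widetilde R_h(z)$: since $iQ\widetilde R_h(z)$ has rank $\mathcal O(h^{-n})$ by \eqref{finiterank} and norm $\mathcal O(h^{-1})$ by \eqref{modifiedresolvent}, it is trace class and $\det F$ is holomorphic wherever $\widetilde R_h$ is. By Lemma \ref{l:3.4} (enlarging $C_0$ and shrinking $h$) we may take this to be a region of the form $\{\,|\Re z|\le \Lambda h^{1/2},\ -C_0h\le \Im z\le 1\,\}$ containing $W_h$ together with a margin of size a fixed fraction of $h^{1/2}$. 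On that region $|\det F(z)|\le e^{Ch^{-n-1}}$ by \eqref{detes1}, and on its part with $\Im z\ge C_1h$ one has in addition $|\det F(z)|^{-1}\le e^{Ch^{-n-1}}$ by \eqref{detes2}. Fix $z_0$ on the top edge $\rho_h^1$ (recall $B>C_1$, so the last bound applies at $z_0$) and put $g(z):=\det F(z)/\det F(z_0)$; then $g(z_0)=1$, $g$ is holomorphic in $D(z_0,2eR)$ with $2eR$ a suitable small multiple of $h^{1/2}$ (so the disc stays inside the validity region), and $\max_{D(z_0,2eR)}|g|\le e^{Ch^{-n-1}}$.

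Next I would apply \eqref{lowermod}: choosing $R$ so that $W_h\subset D(z_0,R)$, for every $\eta\in(0,1)$ there is an exceptional set $\mathcal D_\eta$, a union of discs of total radius $<\eta R$, such that $\log|g(z)|\ge -C\log(15e^3/\eta)\,h^{-n-1}$ for $z\in D(z_0,R)\setminus\mathcal D_\eta$. Combined with the upper bound this gives $\big|\log|\det F(z)|\big|\le C_\eta h^{-n-1}$ for all $z\in W_h\setminus\mathcal D_\eta$. It remains to realize $\widetilde\gamma_k$ inside $W_h\setminus\mathcal D_\eta$: the segments $\widetilde\gamma^2_k,\widetilde\gamma^4_k$ (and $\widetilde\gamma^3_k$) are only required to stay in a prescribed corridor transverse to their direction, and since the projection of $\mathcal D_\eta$ onto the transverse axis has length $<2\eta R$, for $\eta$ small enough one can slide each segment to an admissible position avoiding $\mathcal D_\eta$ entirely (matching $\widetilde\gamma^4_k=-\widetilde\gamma^2_{k+1}$ at the shared edge, and rounding corners in regions already controlled). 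Then $\big|\log|\det F(z)|\big|=\mathcal O(h^{-n-1})$ on all of $\rho_h$, and rescaling back ($z=h\lambda$) this is \eqref{detbound}. Finally, to pass from $|\det F|$ to $\log\det F$ itself, fix the branch by continuation along $\rho_h$ from $z_0$ (where $|\log\det F(z_0)|=\mathcal O(h^{-n-1})$ after normalizing the argument): its real part is the bounded quantity just estimated, and its imaginary part has variation along $\rho_h$ controlled by the number of zeros of $\det F$ within a fixed $h^{1/2}$-neighbourhood of $\rho_h$, which by Jensen's formula applied to $g$ on $D(z_0,2eR)$ is $\le C\log\max|g|=\mathcal O(h^{-n-1})$.

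The main obstacle is the routing step. Cartan's exceptional set has total radius $\eta R$ with $R\sim h^{1/2}$, of the same order as the whole box, so to fit the contour through $W_h\setminus\mathcal D_\eta$ while respecting a corridor of absolute width $\sim 1$ (width $\sim h$ in the rescaled picture) one is forced to take $\eta$ as small as $\sim h^{1/2}$, which would cost a factor $\log(1/\eta)\sim\log(1/h)$. This is avoided by taking the corridors in the construction of $\widetilde\gamma_k$ to have width a fixed fraction of $2^{k-1}E$ rather than width $2$ — still compatible with the disjoint tiling $\Omega\subset\widetilde\Omega$ — so that $\eta$ can be kept equal to a fixed constant and $\log(15e^3/\eta)=\mathcal O(1)$, which yields \eqref{detbound}. (Keeping the absolute-width corridor would only cost the harmless factor $\log\langle\lambda\rangle$, absorbed anyway in the non-sharp exponent of \eqref{error}.)
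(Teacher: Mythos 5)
Your plan has a genuine gap in the choice of the Cartan radius, and it occurs \emph{before} the routing problem you discuss in your last paragraph. You choose $R$ so that $W_h\subset D(z_0,R)$, which forces $R\gtrsim h^{1/2}$, and you then need $D(z_0,2eR)$ — a disc of radius $\sim h^{1/2}$ — to lie inside the region where $\det F$ is holomorphic and where \eqref{detes1}, \eqref{detes2} hold. But Lemma~\ref{l:3.4} only supplies that region in the form $\{|\Re z|\le 2h^{1/2},\;-C_0h\le\Im z\le 1\}$: below the real axis the margin is only $\mathcal O(h)$, not a fixed fraction of $h^{1/2}$ as you assert. With $z_0$ on $\rho_h^1$ (so $\Im z_0\sim Bh$) and $2eR\sim h^{1/2}$, the disc $D(z_0,2eR)$ reaches down to $\Im z\sim -h^{1/2}$, far outside the rectangle where $\widetilde R_h$ is known to be invertible, so $\det F$ need not be holomorphic there and Cartan's hypotheses fail. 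Enlarging $C_0$ does not help, since $C_0h$ remains $\ll h^{1/2}$; nor does widening the dyadic corridors, which was aimed at the secondary problem (exceptional set too large relative to the corridor) and does nothing about the disc escaping the validity region.

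The paper sidesteps both problems simultaneously by taking \emph{small} discs, $R=C_0'h$, with $z_0=\tfrac12 h^{1/2}+iBh/2$. Then $D(z_0,2eR)\subset[-2h^{1/2},2h^{1/2}]+i[-C_0h,1]$ does stay inside the region of Lemma~\ref{l:3.4} (after enlarging $C_0$ by a fixed amount), and one can take $\eta$ a fixed small constant with $\eta R<h$, so the exceptional set has total diameter comfortably less than the corridor width $2h$ — the absolute-width corridors from the original construction are then already wide enough, and no $\log(1/h)$ loss appears. The price of the small radius is that a single disc only covers the strip near $\Re z=\tfrac12 h^{1/2}$, i.e.\ $\rho_h^2$; the paper therefore repeats the Cartan argument along the bottom edge with a chain of $\sim h^{-1/2}$ translated centers $z_{0j}=z_0+jh$ to construct $\rho_h^3$, and sets $\widetilde\gamma_k^4=-\widetilde\gamma_{k+1}^2$ as before. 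Your observation about fixing the branch of $\log\det F$ and controlling its imaginary part via Jensen is a useful point not made explicit in the paper (and can be carried out with the small discs as well), but as written the large-disc Cartan step at the heart of your argument does not go through.
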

\begin{proof}
We shall apply the lower modulus theorem with $z_0=\frac{1}{2}h^{1/2}+iBh/2$ and $R=C_0'h$ where $C_0'$ is large enough, so that
\begin{equation*}
\begin{split}
[\textstyle{\frac{1}{2}h^{1/2}-h,\frac{1}{2}h^{1/2}+h}]+i[(-A-1)h,Bh]   & \subset D(z_0,R) \subset D(z_0,2eR) \\
 & \subset[-2h^{1/2},2h^{1/2}]+i[-C_0h,1].
\end{split}
\end{equation*}
In addition, we let $\eta$ be small enough, so that $\eta R<h$.
Since $\widetilde{R}_h(z)$ is holomorphic in $D(z_0,2eR)$, so is $\det F(z)$. Moreover, from \eqref{detes1}, we see
\begin{equation*}
\log\max_{|z-z_0|<2eR}|\det F(z)|\leq Ch^{-n-1}.
\end{equation*}

On the other hand, at $z=z_0$, by \eqref{detes2}
\begin{equation*}
|\det F(z_0)^{-1}|\leq e^{-Ch^{-n-1}}.
\end{equation*}
Applying the lower modulus theorem with $g(z)=\det F(z)\det F(z_0)^{-1}$, we can choose $\widetilde{\gamma}^2_k$ so that
\begin{equation*}
|\log\det F(z)|=\mathcal{O}(h^{-n-1})
\end{equation*}
when $z\in h\widetilde{\gamma}^2_k$. Taking $\widetilde{\gamma}^4_k=-\widetilde{\gamma}^2_{k+1}$ as required, it is clear that we also have
\begin{equation*}
|\log\det F(z)|=\mathcal{O}(h^{-n-1})
\end{equation*}
when $z\in h\widetilde{\gamma}^4_k$.

{  Finally, we can apply the lower modulus theorem to a sequence of balls $D(z_{0j},R)$ which are translations of $D(z_0,R)$ as above by $jh$, $0\leqslant j\leqslant \frac{1}{2}h^{-1/2}$. In particular, $z_{0j}=z_0+jh$ and we have
\begin{equation*}
\begin{split}
[\textstyle{\frac{1}{2}h^{1/2}+jh-\frac{1}{2}h,\frac{1}{2}h^{1/2}+jh+\frac{1}{2}h}]+i[(-A-1)h,-Ah]   & \subset D(z_0,R) \subset D(z_0,2eR) \\
 & \subset[-2h^{1/2},2h^{1/2}]+i[-C_0h,1].
\end{split}
\end{equation*}
We can now repeat the argument above and notice that all the estimates hold uniformly in $j$ to conclude that we can choose a curve in $[\frac{1}{2}h^{1/2}+jh-\frac{1}{2}h,\frac{1}{2}h^{1/2}+jh+\frac{1}{2}h]$ for each $j$ such that $|\log\det F(z)|=O(h^{-n-1})$ uniformly in $j$ on these curves and these curves form the curve $h\tilde{\gamma}_3$ connecting $z_h$ and $z_h'$.}
\end{proof}

Now by the upper bound on the number of resonances, we have
\begin{equation}
\label{resw}
\#\Res_h(P)\cap W_h=\mathcal{O}(h^{-\frac{n}{2}-1}).
\end{equation}
From the proof of the lemma above, we can actually construct $\rho_h$ so that \eqref{detbound} holds in a neighborhood of $\rho_h$ of size $\sim h^{\frac{n}{2}+2}$. By Cauchy's inequality, we see that for $z\in\rho_h$,
\begin{equation}
\label{hes3}
\left|\frac{d}{dz}\log\det F(z)\right|=\mathcal{O}(h^{-\frac{3n}{2}-3}).
\end{equation}
Now combining \eqref{hes1}, \eqref{hes2} and \eqref{hes3}, we have the estimate for $z\in\rho_h$,
\begin{equation}
\label{zetabound}
\left|\frac{d}{dz}\log\zeta_h(z)\right|=\mathcal{O}(h^{-2n-1}),
\end{equation}
which is equivalent to \eqref{finiteorder}.

\subsection{End of the proof}
Now we finish the proof of the local trace formula by summing the contour integrals \eqref{contour}.

First, since $\widehat{\psi}(\lambda)=\mathcal{O}(\langle\lambda\rangle^{-\infty})$ as $\Re\lambda\to\pm\infty$ as well as all of its derivatives when $\Im\lambda$ is bounded, we have $\widehat{\psi}_h(z)=\mathcal{O}(h^\infty)$ as well as all its derivatives. Therefore by \eqref{zetabound},
\begin{equation*}
\int_{\widetilde{\gamma}_k^2}\widehat{\psi}(\lambda)\frac{d}{d\lambda}\log\zeta_1(\lambda) d\lambda =\int_{h\widetilde{\gamma}_k^2}
\widehat{\psi}_h(z)\frac{d}{dz}\log\zeta_h(z)dz=\mathcal{O}(h^\infty).
\end{equation*}
Moreover, both of the sums
\begin{equation*}
\sum_{k\in\mathbb{Z}}\int_{\widetilde{\gamma}_k^1}\widehat{\psi}(\lambda)
\frac{d}{d\lambda}\log\zeta_1(\lambda)d\lambda
=-\int_{\mathbb{R}+iB}\widehat{\psi}(\lambda)
\frac{d}{d\lambda}\log\zeta_1(\lambda)d\lambda
\end{equation*}
and
\begin{equation*}
\sum_{k\in\mathbb{Z}}\int_{\widetilde{\gamma}_k^3}\widehat{\psi}(\lambda)
\frac{d}{d\lambda}\log\zeta_1(\lambda)d\lambda=\int_{\Gamma}\frac{d}{d\lambda}\log\zeta_1(\lambda)d\lambda
\end{equation*}
converges absolutely. Here $\Gamma=\bigcup_{k=-\infty}^\infty\widetilde{\gamma}_k^3$.
On the other hand, by the upper bound of the resonance \eqref{resw}, the sum
\begin{equation*}
\sum_{\mu_j\in\Res(P)\cap\widetilde{\Omega}}\widehat{\psi}(\mu_j)
=\sum_{k\in\mathbb{Z}}\sum_{z_j\in\Res_h(P)\cap W_k}\widehat{\psi}_h(z_j)
\end{equation*}
also converges absolutely. Hence we have the following identity
\begin{equation*}
\int_{\mathbb{R}+iB}\widehat{\psi}(\lambda)
\frac{d}{d\lambda}\log\zeta_1(\lambda)d\lambda
=\sum_{\mu_j\in\Res(P)\cap\widetilde{\Omega}}\widehat{\psi}(\mu_j)+
\int_{\Gamma}\widehat{\psi}(\lambda)
\frac{d}{d\lambda}\log\zeta_1(\lambda)d\lambda.
\end{equation*}
In other words,
\begin{equation*}
\sum_{\mu\in\Res(P), \Im\mu>-A}\widehat{\psi}(\mu)
=\sum_{\gamma}\frac{T_\gamma^{\#}\delta(t-T_\gamma)}
{|\det(I-\mathcal{P}_\gamma)|}+\langle\psi,F_A\rangle
\end{equation*}
where
\begin{equation}
\label{errorform}
\langle\psi, F_A\rangle=-\sum_{\mu_j\in\Res(P)\cap\widetilde{\Omega},\Im\mu_j\leq-A}\widehat{\psi}(\mu_j)+\int_{\Gamma}\widehat{\psi}(\lambda)
\frac{d}{d\lambda}\log\zeta_1(\lambda)d\lambda.
\end{equation}
This proves \eqref{localtrace}.

So far, the distribution $F_A$ is only defined in $\mathcal{D}'(0,\infty)$. However, the right-hand side in \eqref{localtrace} has an obvious extension to $\mathbb{R}$ by zero on the negative half line as it is supported away from 0. By the polynomial upper bounds \eqref{upperbound} on the number of resonances in the strip $\Im\mu>-A$, the sum
\begin{equation*}
u_A(t)=\sum_{\mu\in\Res(P),\Im\mu>-A}e^{-i\mu t}
\end{equation*}
also has an extension to $\mathbb{R}$ which has support in $[0,\infty)$. We only need to show that $u_A$ is of finite order: For any $\varphi\in C_0^\infty(0,\infty)$,  $k\geqslant0$, we have
$\widehat{\varphi^{(k)}}(\lambda)=(i\lambda)^k\widehat{\varphi}(\lambda)$. Therefore we can write
\begin{equation*}
\langle u_A,\varphi\rangle=\sum_{\mu\in\Res(P),\Im\mu>-A}\widehat{\varphi}(\mu)=\sum_{\mu\in\Res(P),\Im\mu>-A}(i\mu)^{-k}\widehat{\varphi^{(k)}}(\mu)
\end{equation*}
When $k$ is large, the sum
\begin{equation*}
\sum_{\mu\in\Res(P),\Im\mu>-A}|\mu|^{-k}
\end{equation*}
converges absolutely. Therefore we have the finite order property of $u_A$. Moreover, any two such extensions of $u_A$ are only differed by a distribution $v$ supported at $\{0\}$, that is,  a linear combination of delta function and its derivatives.

Now we can certainly extend $F_A$ to a distribution on $\mathbb{R}$ with support in $[0,\infty)$. Since $\check{v}$ is a polynomial in the whole complex plane. Therefore choice of the extension of $u_A$ does not affect the estimate on $\widehat{F}_A$.

Finally, we give the {  desired} estimate on $\widehat{F}_A$. This follows from the fact $e^{\eta t}F_A\in\mathcal{S}'$ for any $\eta<A$ and \cite[Theorem 7.4.2]{H}. To see this, we only need to show that we can extend \eqref{errorform} to $\psi\in C^\infty(\mathbb{R})$ with support in $(0,\infty)$ such that $\varphi=e^{-\eta t}\psi\in\mathcal{S}$. If $\psi$ has compact support, then
\begin{equation*}
\widehat{\psi}(\lambda)=\int\psi(t)e^{-it\lambda}dt
=\int\varphi(t)e^{-it(\lambda+i\eta)}dt=\widehat{\varphi}(\lambda+i\eta).
\end{equation*}
Therefore \eqref{errorform} can be rewritten as
\begin{equation}
\label{deffa}
\langle\psi, F_A\rangle=-\sum_{\mu_j\in\Res(P)\cap\widetilde{\Omega},\Im\mu_j\leq-A}\widehat{\varphi}(\mu_j+i\eta)+\int_{\Gamma}\widehat{\varphi}(\lambda+i\eta)\frac{d}{d\lambda}\log\zeta_1(\lambda)d\lambda.
\end{equation}
Again, by the estimate \eqref{finiteorder} and the upper bound on the resonances \eqref{upperbound}, this converges as long as $\supp\varphi\subset(0,\infty)$ which gives the definition of $e^{\eta t}F_A$ in $\mathcal{S}'$. The order of $\widehat{F}_A(\lambda)$ comes from the formula \eqref{deffa} and \eqref{finiteorder}, \eqref{upperbound}.

\section{Proof of the weak lower bound on the number of resonances}
Now we prove the weak lower bound on the number of resonances. The strategy is similar to the proof in \cite{GZ}  and we proceed by contradiction. Let
\begin{equation*}
N_A(r)=\#(\Res(P)\cap\{|\mu|\leq r,\Im\mu>-A\})
\end{equation*}
and assume that
\begin{equation}
\label{contra}
N_A(r)\leq P(\delta,A)r^\delta.
\end{equation}

We fix a test function $\varphi\in C_0^\infty(\mathbb{R})$ with the following properties:
\begin{equation*}
\varphi\geqslant0, \ \ \varphi(0)>0, \ \ \supp\varphi\subset[-1,1].
\end{equation*}
Next we set $\varphi_{l,d}(t)=\varphi(l^{-1}(t-d))$ where $d>1$ and $l<1$, so that $\varphi_{l,d}\in C_0^\infty(0,\infty)$. Therefore we can apply the local trace formula to get
\begin{equation}
\label{tr1}
\sum_{\mu\in\Res(P),\Im\mu>-A}\widehat{\varphi}_{l,d}(\mu)
+\langle F_A,\varphi_{l,d}\rangle=\sum_{\gamma}\frac{T_\gamma^\#\varphi_{l,d}(T_\gamma)}{|\det(I-\mathcal{P}_\gamma)|}.
\end{equation}

First, we note that by Paley-Wiener theorem,
\begin{equation}
\label{PW}
|\widehat{\varphi}_{l,d}(\zeta)|=|l\widehat{\varphi}(l\zeta)e^{-id\zeta}|\leq
C_N l e^{(d-l)\Im\zeta}(1+|l\zeta|)^{-N},
\end{equation}
for $\Im\zeta\leq0$ and any $ N \geq 0 $.

By the assumption, we have the following estimate on the sum on the left-hand side of \eqref{tr1},
\begin{equation}
\label{es1}
\begin{split}
\left|\sum_{\mu\in\Res(P),\Im\mu>-A}\widehat{\varphi}_{l,d}(\mu)\right|\leq&\; Cl\int_0^\infty(1+l r)^{-N}dN_A(r)\\
\leq&\; Cl\int_0^\infty\frac{d}{dr}[(1+l r)^{-N}]N_A(r)dr\\
\leq&\; CP(\delta,A)l\int_0^\infty\frac{d}{dr}[(1+l r)^{-N}]r^\delta dr\leq Cl^{1-\delta}.
\end{split}
\end{equation}

The remainder term $\langle F_A,\varphi_{l,d}\rangle$ on the left-hand side of \eqref{tr1} can be rewritten as
\begin{equation*}
\langle\check{F}_A,\widehat{\varphi}_{l,d}\rangle
=\int_{\mathbb{R}}\widehat{F}_A(-\zeta)\widehat{\varphi}_{l,d}(\zeta)d\zeta.
\end{equation*}
By \eqref{error}, we can pass the contour to $\mathbb{R}+i(\epsilon-A)$ to get
\begin{equation}
\label{es2}
\begin{split}
|\langle F_A,\varphi_{l,d}\rangle|
\leq &
\int_{\mathbb{R}+i(\epsilon-A)}|\widehat{F}_A(-\zeta)|
|\widehat{\varphi}_{l,d}(\zeta)|d\zeta\\
\leq &\;
Cl e^{(d-l)(\epsilon-A)}
\int_{\mathbb{R}+i(\epsilon-A)}\langle\zeta\rangle^{2n+1}(1+l|\zeta|)^{-2n-3}d\zeta\\
\leq &\; Cl^{-2n-1}e^{(d-l)(\epsilon-A)}
\end{split}
\end{equation}
where we use \eqref{PW} with $ N =2n+3$.

{ 
On the other hand, to get a lower bound of the right-hand side of \eqref{tr1}, we fix one primitive periodic orbit $\gamma_0$ and let $d=kT_{\gamma_0}$, $k\in\mathbb{N}$. Since every term there is nonnegative, we ignore all but the term corresponding to $\gamma_d$ which is the $k$-times iterate of $\gamma_0$ and get
\begin{equation*}
\sum_{\gamma}\frac{T_\gamma^\#\varphi_{l,d}(T_\gamma)}{|\det(I-\mathcal{P}_\gamma)|}
\geq\frac{T_{\gamma_d}^\#\varphi(0)}{|\det(I-\mathcal{P}_{\gamma_d})|}=\frac{T_{\gamma_0}\varphi(0)}{|\det(I-\mathcal{P}_{\gamma_0}^k)|}.
\end{equation*}
Let $\lambda_1,\ldots,\lambda_{n-1}$ be the eigenvalues of $\mathcal{P}_{\gamma_0}$, then for some $\alpha$ depending only on $\lambda_j$'s,
\begin{equation*}
|\det(I-\mathcal{P}_{\gamma_0}^k)|=|(1-\lambda_1^k)\cdots(1-\lambda_{n-1}^k)|\leq Ce^{k\alpha}=Ce^{\theta_0d},
\end{equation*}
if $\theta_0=\alpha/T_{\gamma_0}$.} 
This gives the lower bound
\begin{equation}
\label{es3}
\sum_{\gamma}\frac{T_\gamma^\#\varphi_{l,d}(T_\gamma)}{|\det(I-\mathcal{P}_\gamma)|}\geq Ce^{-\theta_0d}.
\end{equation}


Combining \eqref{es1},\eqref{es2},\eqref{es3}, we have the following inequality
\begin{equation*}
Cl^{1-\delta}+Cl^{-2n-1}e^{(d-l)(\epsilon-A)}\geq Ce^{-\theta d}.
\end{equation*}
We first choose $l=e^{-\beta d}$, then we have
\begin{equation*}
Ce^{-\beta d(1-\delta)}+Ce^{(d-l)(\epsilon-A)+(2n+1)\beta d}\geq Ce^{-\theta_0d}.
\end{equation*}
Notice that the constants $C$'s may depend on $A$, but not on $d$. If we choose $\beta$ and $A$ large while $\epsilon$ small so that $\beta(1-\delta)>\theta_0$ and $A-\epsilon-(2n+1)\beta>\theta_0$, then we get a contradiction as $d\to\infty$. This can be achieved when $A>A_\delta$ where
\begin{equation}
\label{adelta}
A_\delta=\theta_0(1+(2n+1)(1-\delta)^{-1}).
\end{equation}
This finishes the proof of Theorem \ref{thm2}.

\medskip

\noindent
{\bf Remark.} { 
From the proof, we see that the essential gap is bounded by $A_0=\theta_0(2n+2)$, where $\theta_0$ given above only depends on the Poincar\'{e} map associated to a primitive periodic orbit $\gamma_0$. More explicitly, 
$$\theta_0=\frac{1}{T_{\gamma_0}}\sum_{\lambda\in\sigma(\mathcal{P}_{\gamma_0}):|\lambda|>1}\log|\lambda|.$$
A weaker bound not depending on the specific orbit is given by
$\theta_0\leq\theta d_u$ where $d_u=\dim E_u$ is the dimension of the unstable fiber and $\theta$ is the Lyapunov constant of the flow given in \S 2.1.}

\appendix

\newcommand{\R}{\mathbb{R}}
\newcommand{\C}{\mathbb{C}}
\newcommand{\Z}{\mathbb{Z}}
\newcommand{\N}{\mathbb{N}}
\newcommand{\Q}{\mathbb{Q}}
\newcommand{\D}{ \Omega}
\newcommand{\B}{{\mathcal B}}
\newcommand{\hen}{\mathbb{H}^{n}}
\newcommand{\T}{\mathbb{T}^d}
\renewcommand{\H}{\mathbb{H}^2}

\newcommand{\p}{\partial}
\newcommand{\lt}{{\mathcal L}}
\newcommand{\sigap}{\Sigma_A^+}
\newcommand{\siga}{\Sigma_A}
\newcommand{\U}{{\mathcal U}}
\newcommand{\M}{{\mathcal M}}
\newcommand{\half}{{\textstyle{\frac{1}{2}}}}

\section{An improvement for weakly mixing flows}
\label{weakmix}
\begin{center}
{\sc By Fr\'ed\'eric Naud}
\end{center}

An Anosov flow is called {\em weakly mixing} if
\begin{equation}
\label{eq:topmi}   \varphi_t^* f =e^{iat}f, \ \ f \in C ( X )
\ \Longrightarrow \ a = 0 , \ \ f = \rm{const}.
\end{equation}
This condition is not always satisfied for an arbitrary Anosov flow: for example suspensions of Anosov diffeomorphism by a constant
return time are not weakly mixing. On the other hand, if the flow is {\it volume} preserving, Anosov's alternative shows \cite{Anosov1} that it is {\it either} a suspension by a constant return time function {\it or} mixing for the volume measure and hence weakly mixing. 
Assuming this weakly mixing property we will obtain a more precise strip with infinitely many resonances.
The width of that strip is given in terms of a {\em topological pressure} and we start by recalling its definition.

If $G$ is a real valued H\"older continuous function,
its { topological pressure}
can be defined by the variational formula
\begin{equation}
\label{eq:press}
P(G)=\sup_{\mu} \left ( h_\mu(\varphi_1)+\int_X Gd\mu \right),
\end{equation}
 where the supremum is taken over all $\varphi_t$-invariant probability measures, and $h_{\mu}$ is the Kolgomogorov-Sinai entropy.

If in addition
\eqref{eq:topmi} holds, that is if the flow $ \varphi_t $ is
{topologically weakly mixing}, there exists an alternative way to compute the pressure using averages over closed orbits. Let $\mathcal G$ denote the set of periodic orbits of the flow and
if $\gamma \in {\mathcal G }$ let $T_\gamma$ denote its period.
Then if $P(G)>0$ we have the following asymptotic formula:
\begin{equation}
\label{eq:PaPo}
\sum_{T_\gamma \leq T}  e^{\int_\gamma G}=\frac{e^{TP(G)}}{P(G)}+o\left (e^{TP(G)} \right), \ \
\ \ T \to + \infty .
\end{equation}
 This formula dates back to Bowen \cite{Bow} -- see
\cite[Chapter 7, p.177]{PaPo}. Notice that \eqref{eq:PaPo}
implies that one can find $C>0$ such that for all $T$ large,
 \begin{equation}
\label{eq:*}   \sum_{T-1\leq T_\gamma \leq T+1}  e^{\int_\gamma G}\geq C e^{T P(G)}. \end{equation}
The case $P(G)\leq 0$ (which will be considered here)
can be dealt with by applying the
above lower bound to
$G_\epsilon=G+(1+\epsilon) \vert P(G) \vert$ for a fixed $\epsilon>0$.

The function $ G $ used in the estimate of the strip is the
Sinai-Ruelle-Bowen (SRB) potential $ \psi^u ( x ) $,  defined as follows:
\begin{equation}
\label{eq:psiu}
\psi^{u}(x)=-\frac{d}{dt} \left \{ \log \vert \det D_x \varphi_t\vert_{E_u(x)} \vert  \right \} \vert_{t=0},
\end{equation}
where $E_u(x)$ is the unstable subspace in $T_xX$ -- \S 2.1.
 The potential $\psi^u$ is H\"older continuous on $X$ and
the associated invariant measure (equilibrium state) $\mu_u$
generalizes the Liouville measure for flows:
in general it is not absolutely continous with respect to the Lebesgue
measure but it is absolutely continuous on unstable leaves.

We can now state the result giving a gap in terms of pressure:

\begin{thm}
\label{thm3}
Suppose that $ X $ is a compact manifold and $ \varphi_t : X \to X $
is a {\em weakly mixing Anosov} flow. Let $ \psi^ u $ given
by \eqref{eq:psiu} and $ P ( 2 \psi^u ) $ be its pressure defined
by \eqref{eq:press}.

Then for any $ \epsilon > 0 $ we have
\begin{equation}
\label{eq:Naud}
\#(\Res(P)\cap\{\mu \in \CC \, : \,  \Im\mu > (2n 
+ \textstyle{\frac32}) P(2 \psi^u ) - \epsilon \})
= \infty ,  \end{equation}
where $ n = \dim X $.
\end{thm}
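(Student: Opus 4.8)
The plan is to run the contradiction scheme used for Theorem \ref{thm2} (itself modelled on \cite{GZ}), but to replace the single–orbit lower bound \eqref{es3} by an orbit sum governed by the pressure of $2\psi^u$; the factor $2$, and hence the appearance of $P(2\psi^u)$ rather than $P(\psi^u)$, comes from the fact that the sharp estimate is a \emph{second–moment} (Plancherel) estimate for the logarithmic derivative of $\zeta_1$ along horizontal lines. Set $A:=-(2n+\tfrac32)P(2\psi^u)+\epsilon$. Since $h_\mu+\int_X\psi^u\,d\mu\le0$ for every invariant probability measure $\mu$ (Ruelle's inequality), one has $P(2\psi^u)\le P(\psi^u)=0$, so $A>0$ and it suffices to exhibit infinitely many resonances in $\{\Im\mu>-A\}$. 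Suppose instead that $\Res(P)\cap\{\Im\mu>-A\}$ is finite; by weak mixing $0$ is the only resonance on the real axis and it is simple, so list the resonances in the strip as $0=\mu_1,\mu_2,\dots,\mu_M$ with multiplicities $m_j$. Two facts are then available. First, from Theorem \ref{thm1} and its proof, $\tr^\flat e^{-itP}=u_A(t)+F_A(t)$ on $(0,\infty)$ with $u_A(t)=\sum_j m_j e^{-i\mu_j t}$ now a \emph{finite} exponential sum, $F_A$ supported in $[0,\infty)$ with $e^{\eta t}F_A\in\mathcal S'$ for $\eta<A$, and (using \eqref{finiteorder} together with the fact that only finitely many poles remain)
\[ \Psi_0(\lambda):=i\tfrac d{d\lambda}\log\zeta_1(\lambda)-\sum_{j}\tfrac{im_j}{\lambda-\mu_j}=\widehat F_A(\lambda) \]
is holomorphic on $\{\Im\lambda>-A\}$ and $O(\langle\lambda\rangle^{2n+1})$ on $\{\Im\lambda>-A+\epsilon\}$. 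Second, factoring $\det(I-\mathcal P_\gamma)=\det(I-\mathcal P_\gamma|_{E_s})\det(I-\mathcal P_\gamma|_{E_u})$, using $|\det\mathcal P_\gamma|_{E_u}|=e^{-\int_\gamma\psi^u}$ and the uniform hyperbolicity bounds of \S 2.1, gives $|\det(I-\mathcal P_\gamma)|^{-1}\asymp e^{\int_\gamma\psi^u}$ uniformly for $T_\gamma$ bounded below; hence by \eqref{Guillemin}, $\tr^\flat e^{-itP}=\sum_\gamma c_\gamma\delta(t-T_\gamma)$ with $c_\gamma=T_\gamma^\#/|\det(I-\mathcal P_\gamma)|\asymp T_\gamma^\#e^{\int_\gamma\psi^u}$.

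The core step is to estimate the $L^2$ size of $\Psi_0$ along lines $\{\Im\lambda=\sigma\}$ from below and above and derive a contradiction. For $\sigma>0$ the Dirichlet series $i\frac d{d\lambda}\log\zeta_1(\lambda)=\sum_\gamma c_\gamma e^{i\lambda T_\gamma}$ converges absolutely (its abscissa being $P(\psi^u)=0$), and $\xi\mapsto\Psi_0(\xi+i\sigma)$ is, modulo an $L^2(\mathbb R)$ term from the subtracted poles, Besicovitch almost periodic with frequencies $\{T_\gamma\}$ and \emph{positive} coefficients $c_\gamma e^{-\sigma T_\gamma}$; its mean square is therefore $\ge\sum_\gamma c_\gamma^2 e^{-2\sigma T_\gamma}\asymp\sum_\gamma (T_\gamma^\#)^2 e^{2\int_\gamma\psi^u-2\sigma T_\gamma}$. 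By Bowen's asymptotics \eqref{eq:PaPo}–\eqref{eq:*} applied to $G=2\psi^u$ (through $G_\epsilon=2\psi^u+(1+\epsilon)|P(2\psi^u)|$, which has positive pressure), the partial sums of $\sum_{T_\gamma\le T}(T_\gamma^\#)^2 e^{2\int_\gamma\psi^u}$ grow at the exponential rate $P(2\psi^u)$, with a polynomial factor from the primitive orbits (where $T_\gamma^\#=T_\gamma$); consequently this series — and with it the $L^2$ size of $\Psi_0$ on $\{\Im\lambda=\sigma\}$ — diverges once $\sigma\le-\tfrac12 P(2\psi^u)$, and just above that value it blows up at a definite polynomial rate in the distance to $-\tfrac12P(2\psi^u)$. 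On the other hand, since $\Psi_0$ is holomorphic on $\{\Im\lambda>-A\}$ and $O(\langle\lambda\rangle^{2n+1})$ on $\{\Im\lambda>-A+\epsilon\}$, a weighted Hadamard three–lines argument — applied to $\Psi_0(\lambda)/(\lambda+iB_0)^{2n+2}$ for $B_0$ large, whose $L^2$–norm on vertical lines is finite at the edges $\Im\lambda=-A+\epsilon$ and $\Im\lambda=\sigma_1$ (any $\sigma_1>0$) — gives a uniform bound for $\int|\Psi_0(\xi+i\sigma)|^2\langle\xi\rangle^{-4n-4}\,d\xi$ on the whole slab $-A+\epsilon\le\sigma\le\sigma_1$. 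Comparing: when $A$ exceeds the threshold at which the second–moment lower bound still has room to diverge after paying for the weight $\langle\xi\rangle^{-4n-4}$ (two powers of $2n+1$) and for disentangling the orbit–$\delta$ part of $F_A$ from the finite sum $u_A$, one reaches the desired contradiction; this accounting is precisely what produces the requirement $A>-(2n+\tfrac32)P(2\psi^u)$. Hence $\Res(P)\cap\{\Im\mu>-A\}$ is infinite, which is \eqref{eq:Naud}.

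I expect the genuine obstacle to be this last reconciliation of the lower and upper bounds on the line–$L^2$ of $\Psi_0$: the three–lines upper bound is unconditionally finite, whereas the almost–periodic lower bound through $\sum_\gamma c_\gamma^2 e^{-2\sigma T_\gamma}$ is literally valid only in the convergence half–plane $\sigma>0$, so it must be propagated across $\{\Im\lambda=0\}$ while keeping track both of the loss forced by the polynomial growth of $\Psi_0$ in $\xi$ (whence $2n+1$ enters the count twice) and of the cancellation between the delta part of $F_A$ and $u_A$ (which is exactly why $F_A$ decays); it is this bookkeeping that fixes the constant $2n+\tfrac32$. Two routine side issues are coincidences among the periods $T_\gamma$, handled by positivity via $|\sum_{T_\gamma=\tau}c_\gamma|^2\ge\sum_{T_\gamma=\tau}c_\gamma^2$, and the separation of primitive from non-primitive orbits, the latter contributing a lower–order series convergent in a strictly larger half-plane.
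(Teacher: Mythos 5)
Your proposal correctly identifies the key mechanism: a second-moment (Parseval-type) lower bound on the periodic-orbit side of the local trace formula, governed by the pressure $P(2\psi^u)$, played off against the polynomial upper bound \eqref{error} on $\widehat F_A$ after assuming for contradiction that the strip contains finitely many resonances. The algebra relating $\vert\det(I-\mathcal P_\gamma)\vert$ to the unstable Jacobian, the use of Bowen's asymptotics through $G_\epsilon=2\psi^u+(1+\epsilon)\vert P(2\psi^u)\vert$, and the dominance of diagonal orbit terms by positivity, all match the paper.

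However, the implementation you choose leaves a genuine gap that you yourself flag. You propose to compute the mean square of $\Psi_0(\xi+i\sigma)$ along horizontal lines by treating $i\frac{d}{d\lambda}\log\zeta_1$ as a Besicovitch almost-periodic Dirichlet series and invoking a Parseval identity. That identity is literally available only in the half-plane of absolute convergence $\sigma>0$, whereas the contradiction has to be extracted at $\sigma\approx\tfrac12 P(2\psi^u)<0$. You acknowledge that the lower bound ``must be propagated across $\{\Im\lambda=0\}$'' and that the constant $2n+\tfrac32$ should drop out of the resulting ``bookkeeping,'' but no argument is given for either: the Hadamard three-lines estimate with a polynomial weight only bounds a weighted $L^2$ norm from above, and there is no mechanism in the proposal that extracts the unweighted diagonal lower bound from it below the real axis. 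As written, the proof does not close.

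The paper's proof of Theorem~\ref{thm3} avoids the whole issue of analytic continuation of a Parseval identity. Instead of taking $L^2$ averages on horizontal lines, it plugs the \emph{compactly supported} test functions $\psi_{t,\xi}(s)=e^{is\xi}\psi(s-t)$, $\psi\in\CIc((-2,2))$, directly into \eqref{localtrace}. The orbit side then becomes a finite trigonometric sum $S(t,\xi)$ over $T_\gamma\in[t-2,t+2]$, so averaging $\vert S(t,\xi)\vert^2$ against the Gaussian $\sigma^{1/2}e^{-\sigma\xi^2/2}$ in $\xi$ produces, after Fubini, a double orbit sum with kernel $e^{-(T_\gamma-T_{\gamma'})^2/2\sigma}$; by positivity the diagonal alone gives the lower bound $c\sum_{t-1\le T_\gamma\le t+1}\vert\det(I-\mathcal P_\gamma)\vert^{-2}\ge c'e^{tP(2\psi^u)}$. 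No analytic continuation of a Dirichlet series past $\Im\lambda=0$, no almost-periodicity theory, and no three-lines argument are needed; the constant $2n+\tfrac32$ then falls out of an explicit optimization in $\sigma$ (take $\sigma=c_1^{-2}e^{2(P(2\psi^u)-\epsilon)t}$ in the inequality $c_1\sigma^{1/2}+c_2\sigma^{-(2n+1)}e^{-2(A-\epsilon)t}\ge e^{tP(2\psi^u)}$), rather than being asserted by bookkeeping. I'd encourage you to redo the argument with time-localized test functions — this replaces both your three-lines step and the crossing of $\Im\lambda=0$ by a single elementary Gaussian computation, and it makes the exponent transparent.
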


\medskip

 \noindent
{\bf Remark:} Since we are concerned with Anosov flows, we have automatically $P(\psi^u)=0$. This implies, using the variational formula, that we have the bounds
 $$ \sup_{\gamma\in {\mathcal G}} \left \{ -2 T_\gamma^{-1} \log \vert  \det(D_{x_\gamma}\varphi_{T_\gamma} \vert_{E_u(x)}) \vert \right \} \leq P(2\psi^u)\leq \sup_{\mu} \int_X \psi^u d\mu <0.$$
 In particular this implies that if there exists a periodic orbit $\gamma$ along which the unstable jacobian
 $$\vert  \det(D_{x_\gamma}\varphi_{T_\gamma} \vert_{E_u(x)}) \vert, $$
 is close to $1$, then the width of the strip with infinitely many Ruelle-Pollicott resonances is also small. This observation suggests 
that on the verge of non-uniformly hyperbolic dynamics,
 Ruelle-Pollicott resonances  converge to the real axis.

\medskip

\begin{proof}
Choose $ \psi \in \CIc ( ( - 2, 2 ) ) $ so that $ \psi \geq 0 $
and that $ \psi( s ) = 1 $ for $ |s | \leq 1 $. For $ t \geq 0 $
and $ \xi \in \RR $ we put
\[ \psi_{t, \xi } ( s ) := e^{  i s \xi } \psi ( s - t ) . \]
We use this function, with $ t \gg 1 $ as a test function for the 
right hand side of \eqref{localtrace} and we define
\begin{equation}
\label{eq:Stxi}  S ({ t, \xi }) := \sum_{\gamma \in \mathcal G } 
\frac{ e^{ i T_\gamma \xi} 
  \psi ( T_\gamma - t ) T^{\#}_\gamma }{ |\det( I - \mathcal P_\gamma )|} . 
\end{equation}
For $ A > 0 $, the local trace formula \eqref{localtrace} now gives
\begin{equation}
\label{eq:aplo} S ({t , \xi }) = 
\sum_{\mu\in\Res(P), \Im\mu>-A} \widehat \psi_{ t, \xi} ( \mu ) 
+ \langle F_A, \psi_{t , \xi }\rangle  . 
\end{equation}
We note that $ \widehat \psi_{ t , \xi } ( \lambda )  = e^{ i t ( \xi - \lambda ) } \widehat \psi ( \lambda - \xi ) $ is an entire function of $ \lambda $
which satisfies the estimate
\begin{equation}
\label{eq:estpsi} 
| \widehat \psi_{ t , \xi } ( \lambda ) | \leq C_N 
e^{ t \Im \lambda + 2 | \Im \lambda | } ( 1 + | \Re \lambda - \xi | )^{-N} ), 
\end{equation}
for any $ N \geq 0 $. Using \eqref{error} this implies that
\begin{equation}
\label{eq:FAp}
\langle F_A, \psi_{ t, \xi } \rangle \leq 
C_A  e^{ - ( A - \epsilon ) t } ( 1 + |\xi|)^{ 2n + 1 } .
\end{equation}

We now assume that for some $ A > 0 $ we have 
\[ \#(\Res(P)\cap\{\mu \in \CC \, : \,  \Im\mu > - A  \}) < \infty .\]
Then \eqref{eq:aplo}, \eqref{eq:FAp} and \eqref{eq:estpsi} with $N=1$ (recall that the sum over resonances is finite) show that
\begin{equation}
\label{eq:FAAp}
| S( t , \xi ) | \leq \frac{C_1}{1+\vert \xi \vert} + C_2  e^{ - ( A - \epsilon ) t } ( 1 + |\xi|)^{ 2n + 1 } .
\end{equation}

We will now average $ |S ( t, \xi ) |^2 $ against a Gaussian weight:
\[ G ( t , \sigma ) := \sigma^{\frac12} \int_{\RR} | S ( t , \xi)|^2 
e^{ - \sigma \xi^2/2 } d \xi . \]
From \eqref{eq:FAAp} and using the crude bound $(a+b)^2\leq 2(a^2+b^2)$, we see that for $ 0 < \sigma < 1 $, 
\begin{equation}
\label{eq:Gts}
 G ( t, \sigma ) \leq C_1'\sigma^{\frac12}  + C_2' \sigma^{-(2n+1)} e^{ - 2 ( A - 
\epsilon ) t } .
\end{equation}
On the other hand the definition \eqref{eq:Stxi} gives
\begin{equation*}
\begin{split}
G ( t, \sigma ) & = \sigma^{\frac12} \int_\RR \sum_{\gamma \in \mathcal G}
\sum_{\gamma' \in \mathcal G} 
\frac{ T_\gamma^{\#} T_{\gamma'}^{\#} e^{  i \xi ( T_\gamma - T_{\gamma'})}
\psi ( T_\gamma - t ) \psi ( T_{\gamma'}  - t ) }
{ | \det ( I - \mathcal P_\gamma ) | | \det ( I - \mathcal P_{\gamma'} ) |}
e^{ - \sigma \xi^2/2 } d \xi \\
& = \sqrt { 2 \pi } 
\sum_{\gamma \in \mathcal G}
\sum_{\gamma' \in \mathcal G} 
\frac{ T_\gamma^{\#} T_{\gamma'}^{\#} e^{  - ( T_\gamma - T_{\gamma'})^2 /2 \sigma }
\psi ( T_\gamma - t ) \psi ( T_{\gamma'}  - t ) }
{ | \det ( I - \mathcal P_\gamma ) | | \det ( I - \mathcal P_{\gamma'} ) |}
\end{split}
\end{equation*}
Since all the terms in the sums are non-negative we estimate 
$ G ( t, \sigma ) $ from below using the diagonal contributions only:
\begin{equation}
\label{eq:Gauss}
G ( \sigma, t ) \geq \sqrt{ 2 \pi}
\sum_{\gamma \in \mathcal G}
\frac{ (T_\gamma^{\#})^2 
\psi ( T_\gamma - t )^2} 
{ | \det ( I - \mathcal P_\gamma ) |^2} \geq
c \sum_{ t - 1 \leq T_\gamma \leq t + 1} { | \det ( I - \mathcal P_\gamma ) |^{-2}} .
\end{equation}
We now want to relate the sum on the right hand side 
 to a quantity which can be estimate using  \eqref{eq:*} with 
$ G = 2 \psi^u(x)$ defined by in \eqref{eq:psiu}.

The potential $ \psi^u $  satisfies 
the elementary formula that 
follows from the cocycle property: 
 $$\int_\gamma \psi^u=\int_0^{T_\gamma} \psi^{u}(\varphi_t x)dt=-\log \vert \det D_x \varphi_{T_\gamma} \vert_{E_u(x)} \vert, \ \ x \in \gamma. $$
see for example \cite{BowRu}.
Using \eqref{eq:*} we therefore have the lower bound
$$ \sum_{T-1\leq T_\gamma \leq T+1} e^{2\int_\gamma \psi^{u}}= \sum_{T-1\leq T_\gamma \leq T+1}
 \frac{1}{\vert \det(D_x \varphi_{T_\gamma} \vert_{E_u(x)})\vert^2}\geq C e^{T P(2 \psi^u)}.$$
Now we are almost done if we can relate $\det(I-\mathcal P\gamma)$ to $\det(D_x \varphi_{T_\gamma} \vert_{E_u(x)})$. By choosing an appropriate basis of $T_xM$ and using the hyperbolic splitting of the tangent space, it is easy to check that
\[ \begin{split} \det(I-\mathcal P\gamma) & =\det(I-D_x \varphi_{T_\gamma} \vert_{E_s(x) \bigoplus E_u(x)} )\\
& =\det(I-D_x \varphi_{T_\gamma} \vert_{E_s(x)})\det(I-D_x \varphi_{T_\gamma} \vert_{E_u(x)}). \end{split} \]
Now observe that
$[D_x \varphi_{T} \vert_{E_u(x)}) ]^{-1}=D_x \varphi_{-T}\vert_{E_u(x)}$,
so that we can write
 $$ \det(I-D_x \varphi_{T_\gamma} \vert_{E_u(x)})=\det(D_x \varphi_{T_\gamma} \vert_{E_u(x)} )\det( D_x \varphi_{-T_\gamma}\vert_{E_u(x)}-I).$$
 Using the Anosov property reviewed in \S 2.1, we have independently on the choice of $x\in \gamma$,
 $$\Vert D_x \varphi_{T_\gamma} \vert_{E_s(x)} \Vert\leq C e^{-\theta T_\gamma}\ \mathrm{and}\  \Vert D_x \varphi_{-T_\gamma} \Vert_{E_u(x)} \Vert\leq C e^{-\theta T_\gamma }.$$
We deduce therefore that there exists a large constant $C'>0$ such that for all $T$ large enough, for all periodic orbit $\gamma$ with
$T-1\leq T_\gamma \leq T+1$, 
  we have indeed
$$ \vert \det(I-D_x \varphi_{T_\gamma} \vert_{E_u(x) \bigoplus E_s(x)} ) \vert \leq C' \vert \det(D_x \varphi_{T_\gamma} \vert_{E_u(x)})\vert,$$
and
\[ G ( \sigma, t ) \geq c \sum_{ t - 1 \leq T_\gamma \leq t + 1} 
| \det ( I - \mathcal P_\gamma )|^{-2} \geq c' e^{ P ( 2 \psi_u ) t }.
\] 
Combining this with \eqref{eq:Gts} we obtain for $ t \gg 1 $ and
$ 0 < \sigma < 1$, 
\[ c_1 \sigma^{\frac12} + c_2 \sigma^{-(2n+1)} e^{ - ( A - \epsilon ) t } 
\geq e^{  P ( 2 \psi^u )t  }. \]
Now take $ \sigma = c_1^{-2} e^{  2 ( P ( 2 \psi^u ) - \epsilon ) t } $. 
Then 
\[ e^{ (P ( 2 \psi^u ) - \epsilon ) t } + c_2 '
e^{ ( - ( 2n + 1)  
2 ( P ( 2 \psi^u ) - \epsilon ) -  2 A + 2 \epsilon  )t } 
\geq e^{  P (2 \psi^u ) t } . \]
This gives a contradiction for all $ A > ( 2 n + \frac32 ) P ( 2 \psi^u ) $, 
concluding the proof.
\end{proof}

{ 
\section{Computing the power spectrum of Anosov suspension flows}
\label{suspe}

\begin{center}
{\sc By Fr\'ed\'eric Naud}
\end{center}

Let $\T:=\R^d/ \Z^d$ denote the  $d$-dimensional torus and let 
$F:\T \rightarrow \T$ be a smooth {\it Anosov diffeomorphism}. Fix $r>0$ and consider the product
$\widetilde{M}:=\T\times [0,r]$.
If one identifies each boundary components of $\widetilde {M}$ through the rule $(x,r)\sim (F(x),0)$, then 
the quotient, $M$, is a smooth manifold.
The vertical flow $$\varphi_t(x,u):=(x,u+t), $$
with the above identification is then well defined and is an Anosov flow (called the constant time suspension of $F$). The purpose of this section is to compute the Ruelle resonances of this particular family of Anosov flows and show that they form  a {\it lattice} which is related to ``the suspension time" $r$ and the Ruelle spectrum of the Anosov map $F$, see \eqref{eq:resP} for the exact formula.

Although the results are not surprising they do not seem to be available in the literature in the form presented here. 
Calculations involving the Ruelle zeta function for locally constant Axiom A flows can be found in \cite{Poll,Ruelle1} and what we show below is definitely close to the ideas in those papers. For simplicity, we will assume that $F$ is a {\it small real analytic perturbation} of a linear Anosov diffeomorphism
$A:\T \rightarrow \T$ such that we have (for $\epsilon>0$ small enough),
$$F(x)=Ax+\epsilon \Psi(x)\ \mathrm{mod}\ \Z^d,$$
where $\Psi:\T\rightarrow \R^d$ is a real-analytic map. A popular example of linear Anosov map $A$ is the celebrated {\it Arnold's cat map} induced by the $SL_2(\Z)$ element
$$A=\left ( \begin{array}{cc} 1&1\\1&2   \end{array} \right).$$

From the work of Faure-Roy \cite{Faureroy}, we know that there exists a Hilbert space $\mathcal{H}$ of hyperfunctions such that the transfer operator $T:\mathcal{H}\rightarrow \mathcal{H}$ defined by 
$$T(f):=f\circ F$$
is of trace class. In addition, we have the trace formula valid for all $p\geq 1$, 
\begin{equation}
\label{tr01}
\tr T^p=\sum_{F^p x=x} \frac{1}{\vert \det(I-D_x (F^p)) \vert} = \sum_{ k=0}^{\infty } e^{ - i p \lambda_k} , 
\end{equation}
where $ e^{ -i \lambda_j } $
are the Ruelle resonances of $F$, ordered by decreasing modulus, 
$ 1=\vert e^{- i \lambda_0} \vert \geq \ldots \geq \vert e^{-i \lambda_k} \vert\geq \ldots $, $ k \in [ 1 , K ) \cap \N $, where $ 2 \leq K \leq \infty $.
The exponent $ \lambda_k $ defined modulo $ 2 \pi  \Z $. 
 We also recall from \cite{Faureroy} that 
\begin{equation}
\label{eq:FRbound} \Im  \lambda_k \leq -C k^{1/d}, \end{equation}
and that the exponent $1/d$ is believed to be optimal.
In what follows we will only use the trace formulas \eqref{tr01}.

We now proceed to compute the Ruelle resonances of the flow by evaluating the dynamical side of the local trace formula. We use the same notations as before. Let $\varphi \in C_0^\infty( (0,\infty))$ be a test function. Using the fact that there is a one-to-one correspondence between primitive periodic
orbits of the flow $\varphi_t$ and periodic orbits of the map $F$ of the form $\{ x,Fx,\ldots, F^k x\}$ where $k$ is the least period, we can write
\[ \begin{split} \sum_{\gamma \in \mathcal{G}} \frac{T_\gamma^\# \varphi(T_\gamma)}{ \vert \det(I-\mathcal{P}_\gamma)\vert} & = 
\sum_{n\geq 1}\sum_{k\geq 1} \frac{1}{k} \sum_{T^kx=x \atop k\  \mathrm{least}} 
\frac{kr\varphi(knr)}{ \vert \det(I-D_xF^{kn})\vert} 
=r\sum_{p=1}^{\infty} \sum_{F^p x=x}  \frac{\varphi(pr)}{ \vert \det(I-D_xF^{p})\vert}. \end{split} \]
Using the trace formula (\ref{tr01}) and exchanging summation order we get
$$\sum_{\gamma \in \mathcal{G}} \frac{T_\gamma^\# \varphi(T_\gamma)}{ \vert \det(I-\mathcal{P}_\gamma)\vert}=
r\sum_{k=0}^{\infty} \sum_{p=1}^{\infty} \varphi(pr) e^{-i p \lambda_k}. $$

Using the Poisson summation formula we conclude that 
\begin{equation}
\label{eq:poisson} \sum_{\gamma \in \mathcal{G}} \frac{T_\gamma^\# \varphi(T_\gamma)}{ \vert \det(I-\mathcal{P}_\gamma)\vert}=
\sum_{k=0}^{\infty}  \sum_{j \in \Z} \widehat{\varphi}\left(\frac{2\pi}{r} j+  \frac{\lambda_k}{r} \right), \ \ \ \varphi \in \CIc ( ( 0 , \infty )  .
\end{equation}
In other words, in the sense of distributions we have a global trace
formula, 
\begin{equation}
\label{eq:globalsusp} \sum_{\gamma \in \mathcal{G}} \frac{T_\gamma^\#  \delta ( t -T_\gamma)}{ \vert \det(I-\mathcal{P}_\gamma)\vert}=
\sum_{k=0}^{\infty}  \sum_{j \in \Z} e^{ - i ( 2 \pi j + \lambda_k )t/r } , \ \ t > 0 
. 
\end{equation}
Theorem \ref{thm1} shows that for all $A>0$ this is equal to
$$\sum_{\mu \in \mathrm{Res}(P)\atop \Im(\mu)>-A} e^{ - i \mu t } + F_A (t )  , $$
where $F_A$ is a distribution supported on $[ 0 , \infty ) $ whose Fourier transform is analytic on $\{ \Im z <A\}$. Comparison with \eqref{eq:globalsusp} shows that that 
\begin{equation}
\label{eq:resP} \mathrm{Res(P)}= \left\{ ({2\pi} j + {\lambda_k})/r
\; :\; (k,j)\in ([1, K ) \cap \N ) \times \Z \right \}.\end{equation}
In the special case of {\it linear } Anosov maps there is only 
one Ruelle resonance at $ 1 $. (This can be seen immediately from the fact that
$\# \{ x\in \T  : A^nx=x \}=\vert \det(I-A^n)\vert$ so that for all $n\geq 1$ the trace formula (\ref{tr01}) gives 
$ \mathrm{Tr}(T^n)=\sum_{A^n x= x} |\det(I-A^n)|^{-1} =1$.) Therefore the resonances
of the suspension flow are given by $ (2 \pi /r) \Z $.
The fact that there are infinitely many real resonances is due to the absence of mixing of the flow (the suspension time is constant).
It is believed (but not proved so far) that typical Anosov map should have infinitely many eigenvalues $e^{-i \lambda_k}$ and therefore there should be, in general, infinitely many horizontal lines of resonances.

}

\def\arXiv#1{\href{http://arxiv.org/abs/#1}{arXiv:#1}}


\begin{thebibliography}{0}


\bibitem{Anosov1} D. V. Anosov,
{\em Geodesic Flows on Closed Riemannian Manifolds with Negative Curvature}, Proc. Steklov Inst. {\bf 90} (1967).

\bibitem{A} I. Alexandrova,
{\em Semiclassical wavefront set and Fourier integral operators,}
Can. J. Math. {\bf 60} (2008), 241-263.

\bibitem{BT} V. Baladi and M. Tsujii,
{\em Anisotropic H\"older and Sobolev spaces for hyperbolic diffeomorphisms,\/}
Ann. Inst. Fourier {\bf 57} (2007), no.~1, 127--154.

\bibitem{BKL} M. Blank, G. Keller and C. Liverani,
{\em Ruelle--Perron--Frobenius spectrum for Anosov maps,\/}
Nonlinearity {\bf 15} (2002), 1905--1973.

\bibitem{Bow}
R. Bowen, {\it Periodic orbits of hyperbolic flows}, Amer. J. Math.
{\bf 94}(1972), 1--30.

\bibitem{BowRu}
R. Bowen and D. Ruelle,
{\it The ergodic theory of Axiom A flows}, Invent. Math. {\bf 29}(1975),
181--202.

\bibitem{DDZ} K. Datchev, S. Dyatlov and M. Zworski,
{\em Sharp polynomial bounds on the number of Pollicott-Ruelle resonances,}
Ergod. Th. Dynam. Sys. {\bf 34} (2014), 1168--1183.

\bibitem{ds}  M. Dimassi and J. Sj\"ostrand, {\em Spectral
Asymptotics
in the Semi-Classical Limit}, Cambridge U Press, 1999.

\bibitem{DFG} S. Dyatlov, F. Faure and C. Guillarmou,
{\em Power spectrum of the geodesic flow on hyperbolic manifolds,\/}
Analysis \& PDE {\bf 8}(2015), 923--1000. 

\bibitem{DG} S. Dyatlov and C. Guillarmou,
{\em Pollicott-Ruelle resonances for open systems,}
\arXiv{1410.5516}

\bibitem{DZ} S. Dyatlov and M. Zworski,
{\em Dynamical zeta functions for Anosov flows via microlocal analysis,}
\arXiv{1306.4203},
to appear in Ann. Sci. \'Ecole Norm. Sup.

\bibitem{DZ2} S. Dyatlov and M. Zworski,
{\em Stochastic stability of Pollicott--Ruelle resonances,} 
Nonlinearity, {\bf 28}(2015), 3511--3534.


\bibitem{res} S.~Dyatlov and M.~Zworski,
  \emph{Mathematical theory of scattering resonances,\/}
  book in preparation; \url{http://math.mit.edu/~dyatlov/res/res.pdf}

\bibitem{Faureroy}
F. Faure and N. Roy,
{\em Ruelle-{P}ollicott resonances for real analytic hyperbolic maps.}
 Nonlinearity, {\bf 19}(2006), 1233--1252.


\bibitem{FaSj} F. Faure and J. Sj\"ostrand,
{\em Upper bound on the density of Ruelle resonances for Anosov flows,}
Comm. Math. Phys. {\bf 308} (2011), no.~2, 325--364.

\bibitem{FaTs} F. Faure and M. Tsujii,
{\em Band structure of the Ruelle spectrum of contact Anosov flows,\/}
Comptes rendus~-- Math\'{e}matique {\bf 351} (2013), 385--391.

\bibitem{Fr} D. Fried,
{\em Meromorphic zeta functions for analytic flows,}
Comm. Math. Phys. {\bf 174} (1995), 161-190.

\bibitem{GLP} P. Giulietti, C. Liverani, and M. Pollicott,
{\em Anosov Flows and Dynamical Zeta Functions},
Ann. of Math. {\bf 178}(2013), 687--773.

\bibitem{Gu} V. Guillemin,
{\em Lectures on spectral theory of elliptic operators,}
Duke Math J. {\bf 44} (1977), 485--517.

\bibitem{GZ} L. Guillop\'{e}, M. Zworski,
{\em The wave trace for Riemann surfaces,} Geom. funct. anal. {\bf 9} (1999) 1156--1168.

\bibitem{H} L. H\"{o}rmander, {\em The analysis of linear partial differential operators, Vol. I}, Springer 1983.

\bibitem{H3} L. H\"{o}rmander, {\em The analysis of linear partial differential operators, Vol. III}, Springer 1985.

\bibitem{Ik} M. Ikawa,
{\em On the existence of poles of the scattering poles for several convex bodies}, Proc.~Japan~Acad. {\bf 64} (1988), 91--93.

\bibitem{Le} B. Ya. Levin,
{\em Lectures on entire functions}, Transl. Math. Monographs, {\bf 150}. Amer. Math. Soc. Providence, RI, 1996.

\bibitem{Liv} C. Liverani,
{\em On contact Anosov flows,\/}
Ann. of Math. {\bf 159} (2004), 1275--1312.

\bibitem{Me} R.B. Melrose,
{\em Scattering theory and the trace formula of the wave group,}
J.~Funct.~Anal. {\bf 45} (1982), 429--440.

\bibitem{Naud1} F. Naud,
{\em Entropy and decay of correlations for real analytic semiflows,}
Ann. Henri Poincar\'e  {\bf 10}(2009), 429--451.


\bibitem{Naud} F. Naud, {\em Notes sur la minoration pour les
r\'esonances de Ruelle des semi-flots analytiques,} unpublished 2014.



\bibitem{NZ} S. Nonnenmacher and M. Zworski, {\em
Decay of correlations in normally hyperbolic
trapping,} Invent. Math. {\bf 200}(2015), 345--438.

\bibitem{PaPo} W. Parry and M. Pollicott, {\em Zeta functions and the periodic orbit structure of hyperbolic dynamics,} Ast\'erisque 177-178,
Soci\'et\'e math\'ematique de France, 1990.

\bibitem{Po} M. Pollicott,
{\em Meromorphic extensions of generalized zeta functions,\/}
Invent. Math. {\bf 85} (1986), 147--164.

\bibitem{Poll}
M Pollicott,
{\em Error terms in ``prime orbit theorems'' for locally constant
  suspended flows.}
 Quart. J. Math. Oxford Ser.(2), {\bf 41}(1990), 313--323.


\bibitem{Ruelle1}
D. Ruelle.
{\em Flots qui ne m\'elangent pas exponentiellement.}
C. R. Acad. Sci. Paris S\'er. I Math., {\bf 296}(1983).
  1983.

\bibitem{Rue} D. Ruelle,
{\em Resonances of chaotic dynamical systems,\/}
Phys.~Rev.~Lett. {\bf 56} (1986), 405--407.






\bibitem{Ru} H. H. Rugh, {\em Generalized Fredholm determinants and Selberg zeta functions for Axiom A dynamical systems,} Ergod. Th. Dynam. Sys. {\bf 16} (1996), 805--819.

\bibitem{S} J. Sj\"ostrand, {\em A trace formula and
review of some estimates  for resonances}, in
{\em Microlocal analysis and spectral theory} (Lucca, 1996),
377--437, NATO Adv. Sci. Inst. Ser. C
Math. Phys. Sci., 490, Kluwer Acad. Publ., Dordrecht, 1997.

\bibitem{SZ} J. Sj\"ostrand and M. Zworski,
{\em Lower bounds on the number of scattering poles II},
J.~Funct.~Anal. {\bf 123} (1994), 336--367.

\bibitem{Ts} M. Tsujii,
{\em Contact Anosov flows and the FBI transform,\/}
Ergod.~Th.~Dynam.~Sys. {\bf 32} (2012), 2083--2118.

\bibitem{Z1} M. Zworski, {\em Poisson formula for resonances,}
S\'{e}minaire \'{E}DP, \'{E}cole Polytechnique, XIII-1-12, 1996--97.

\bibitem{Z2} M. Zworski, {\em Semiclassical Analysis,}
Graduate Studies in Mathematics {\bf 138}, AMS, 2012.

\end{thebibliography}
\end{document}